\theoremstyle{definition}
\newtheorem{theorem}{Theorem}[section]
\newtheorem{lemma}[theorem]{Lemma}
\newtheorem{fact}[theorem]{Fact}
\newtheorem{proposition}[theorem]{Proposition}
\newtheorem{problem}[theorem]{Problem}
\newtheorem{question}[theorem]{Question}
\newtheorem{claim}[theorem]{Claim}
\newcommand{\E}{\mathbb{E}}
\newcommand{\Bin}{\mathrm{Bin}}
\newcommand{\Var}{\mathrm{Var}}
\newcommand{\X}{\mathbf{X}}
\renewcommand{\S}{S}
\newcommand{\R}{\mathcal{R}}
\newcommand{\N}{\mathcal{N}}
\newcommand{\A}{\mathcal{A}}
\newcommand{\B}{\mathcal{B}}
\newcommand{\U}{\mathcal{U}}
\newcommand{\rmspan}{\mathrm{span}}
\newcommand{\1}{\mathbf{1}}
\newcommand{\bone}{\mathbbm{1}}
\newcommand{\lrpar}[1]{\left( #1 \right)}
\newcommand{\lrabs}[1]{\left| #1 \right|}
\newcommand{\norm}[1]{\left|\left| #1 \right|\right|_2}
\newcommand{\nul}{\mathrm{Null}}
\renewcommand{\r}{r}
\newcommand{\del}{\delta}
\renewcommand{\c}[1]{\mathcal{#1}}
\newcommand{\sm}{\setminus}
\newcommand{\rk}{\mathrm{rank}}
\renewcommand{\a}{a}
\renewcommand{\b}{b}
\newcommand{\supp}{\mathrm{supp}}
\newcommand{\sub}{\subseteq}
\newcommand{\half}{\frac{1}{2}}
\title{Semi-restricted Rock, Paper, Scissors}
\date{}
\author{Sam Spiro\footnote{Department of Mathematics, University of California at San Diego, La Jolla, CA, 92093 USA. Email: {\tt sspiro@ucsd.edu}, {\tt esurya@ucsd.edu}, {\tt jzeng@ucsd.edu}.}\ \textsuperscript{,}\footnote{Supported by the National Science Foundation Graduate Research Fellowship Grant DGE-1650112.} \and
	Erlang Surya\footnotemark[1]\ \textsuperscript{,}\footnote{Supported by the National Science Foundation Grant DMS-2225631.} \and
	Ji Zeng\footnotemark[1]\ \textsuperscript{,}\footnote{Supported by the National Science Foundation Grant DMS-1800746.}}
\begin{document}
	\maketitle
	
	\begin{abstract}
		Consider the following variant of Rock, Paper, Scissors (RPS) played by two players Rei and Norman.  The game consists of $3n$ rounds of RPS, with the twist being that Rei (the restricted player) must use each of Rock, Paper, and Scissors exactly $n$ times during the $3n$ rounds, while Norman is allowed to play normally without any restrictions.  Answering a question of Spiro, we show that a certain greedy strategy is the unique optimal strategy for Rei in this game, and that Norman's expected score is $\Theta(\sqrt{n})$. Moreover, we study semi-restricted versions of  general zero sum games and prove a number of results concerning their optimal strategies and expected scores, which in particular implies our results for semi-restricted RPS.
	\end{abstract}
	
	\section{Introduction}
	\subsection{Rock, Paper, Scissors}
	The game Rock, Paper, Scissors, or RPS for short, is a popular game whose first known usage dates back to China nearly 2000 years ago.  A round of RPS consists of two players simultaneously selecting to play either Rock, Paper, or Scissors; where Rock beats Scissors, Scissors beats Paper, and Paper beats Rock.  If a player uses a move which beats their opponents move, then that player gains a point and their opponent loses a point, with the match resulting in a draw if both players select the same move.
	
	RPS, while fun to play, is not particularly interesting from a mathematical perspective. As a symmetric zero-sum game, both players have an expected score of $0$, and it is easy to show that the unique optimal strategy for both players is to choose each of the three options with uniform probability; see for example \cite[Chapter 17]{von2007theory}.  In this paper we study a non-trivial variant of RPS, inspired by similar games introduced by Fukumoto~\cite{fukumotokaiji} and Spiro~\cite{spiro2022online}, called \textit{semi-restricted RPS}.
	
	Semi-restricted RPS is a perfect-information zero-sum game played by two players named Rei and Norman. The game consists of $3n$ rounds of RPS, with the twist being that Rei must use each of Rock, Paper, and Scissors exactly $n$ times during the $3n$ rounds, while Norman is allowed to play normally without any restrictions. It is clear that Norman has an advantage in this game, since in particular he is guaranteed to win the last round of the game (assuming he is paying attention and has a good memory). However, it is unclear how much more Norman is expected to win over Rei when both play optimally, and it is also unclear what the optimal strategies are for either player.
	
	One simple strategy that Rei can implement is what we call the \textit{greedy strategy}.  Under this strategy, if Rei can still play each of Rock, Paper, and Scissors, she selects each option with probability 1/3, regardless of how many actions remain of each option.  If she can only play, say, Rock and Paper, then she chooses Paper with probability 2/3 and Rock with probability 1/3.  More generally, if she has two options remaining, she will choose the option which beats the other with probability 2/3 and the other option with probability 1/3.  And of course, if only one option remains, she plays this with probability 1.
	
	This strategy is ``greedy'' because with this strategy, Rei minimizes her expected loss for any given round.  Indeed, if she has all three options remaining, then playing each with probability 1/3 makes it so that, regardless of what Norman does, Rei is just as likely to win as she is to lose.  Similarly if only Rock and Paper remains, then Norman should only ever play Paper or Scissors (since Rock is guaranteed not to win). If Rei follows the greedy strategy and Norman plays Paper, then Rei will lose with probability 1/3.  If Norman plays Scissors, then Rei will lose with probability 2/3 but win with probability 1/3.  Thus regardless of what Norman does, Rei will expect to lose 1/3 points each round under the greedy strategy, and one can show that this is best possible.
	
	While the greedy strategy is optimal if Rei is only concerned about a given round, it is far from clear that this is a good strategy overall.  Indeed, say the game reaches the point where Rei can play 100 Rocks, 100 Papers, and just 1 Scissors.  Intuitively, in this scenario Rei should not play Scissors with high probability, as doing so will severely limit her remaining options for the rest of the game.  As such the following may come as a bit of a surprise.
	\begin{theorem}\label{thm:RPS}
		The greedy strategy is the unique optimal strategy for Rei in semi-restricted RPS.  Moreover, if the game consists of $3n$ rounds with both players playing optimally, then Norman's expected score is $\Theta(\sqrt{n})$.
	\end{theorem}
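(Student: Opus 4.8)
The plan is to study the value function $V(a,b,c)$: Norman's expected total score, under optimal play by both, when Rei still has $a$ Rocks, $b$ Papers, and $c$ Scissors to play. A round changes the state only through Rei's move, and once Rei's distribution $q$ over her currently available moves is fixed, Norman's choice affects only the current round, not the future state; so by the minimax theorem
\[
V(a,b,c)=\min_{q}\Bigl[\max_{m}\E_{q}\bigl[\text{Norman's payoff this round}\bigr]+\sum_{r}q_{r}\,V\bigl(\text{state after Rei plays }r\bigr)\Bigr],
\]
where $m$ ranges over Norman's three moves. The bracketed quantity is a convex function of $q$ — a maximum of three affine functions plus an affine function — and the greedy distribution is an interior point of the simplex of available moves; hence greedy is the \emph{unique} minimizer at $(a,b,c)$ exactly when every one-sided directional derivative at the greedy point is strictly positive. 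Computing these derivatives, this is equivalent to the three inequalities
\[
V(a-1,b,c)-V(a,b-1,c)\in(-1,2)
\]
and its two cyclic rotations (with the appropriate single inequality on each two-move face). So the first part of Theorem~\ref{thm:RPS} — and the value formula it entails — reduces to verifying these inequalities at every state.

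I would do this by strong induction on $N=a+b+c$, the hypothesis being that greedy is uniquely optimal from every state of sum at most $N$. In particular $V$ then satisfies the explicit greedy recursion on such states: $V(k,0,0)=k$ (and cyclically $V(a,b,c)=V(c,a,b)$); on a two-move face, $V(a,b,0)=a+g(a,b)$ where $a,b$ count the remaining Rocks and Papers and $g(a,0)=0$, $g(0,b)=b$, $g(a,b)=\tfrac13 g(a-1,b)+\tfrac23 g(a,b-1)$; and $V(a,b,c)=\tfrac13[V(a-1,b,c)+V(a,b-1,c)+V(a,b,c-1)]$ whenever $a,b,c\ge1$. Granting the hypothesis up to $N-1$: a state of sum $N$ with at most one positive coordinate is trivial; a two-move state $(a,b,0)$ reduces to $g(a-1,b)-g(a,b-1)\in(0,3)$, which I would obtain from a separate induction on the recursion for $g$ giving the Lipschitz bounds $0\le g(a,b)-g(a,b-1)\le 1$ and $0\le g(a-1,b)-g(a,b)\le 2$ (and, from the martingale $B-2A$ of the biased walk underlying $g$, the refinement $g(a,b)\ge\max(0,b-2a)$). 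For a three-move state with all coordinates at least $2$, put $\Delta(a,b,c)=V(a-1,b,c)-V(a,b-1,c)$; subtracting the harmonic recursions for its two terms yields $\Delta(a,b,c)=\tfrac13[\Delta(a-1,b,c)+\Delta(a,b-1,c)+\Delta(a,b,c-1)]$, an average of quantities already in $(-1,2)$ by the hypothesis, so $\Delta(a,b,c)\in(-1,2)$; likewise for the rotations. Thus the bulk of the three-move region is essentially free.

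The main obstacle is the remaining three-move states — those with some coordinate equal to $1$ — where the recursion for the relevant $\Delta$ reaches a child on a two-move face, mixing the explicit values $V(\cdot,\cdot,0)=(\cdot)+g(\cdot,\cdot)$ with genuine interior values. Handling these requires controlling $V$ on slices like $\{a=1\}$ and on lines like $\{a=b=1\}$, each governed by a lower-dimensional recursion whose source term is the known face data, and squeezing out of it rather sharp monotonicity estimates — in particular the \emph{strict} forms of the inequalities. Strictness is delicate because the bounds are asymptotically attained: for instance $g(k,1)=3^{-k}$, so $V(a-1,1,0)-V(a,0,0)=3^{-(a-1)}-1\to-1$ as $a\to\infty$. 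I expect this "seam" analysis, carried along inside the same induction, to be the technical heart of the proof.

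For the order of magnitude: once greedy is optimal, the game value is exactly $V(n,n,n)$ as computed by the greedy recursion, which unrolls to $V(n,n,n)=\E[V(W_\sigma)]$, where $W$ is the walk from $(n,n,n)$ that at each step decrements a uniformly random coordinate (all three are positive before time $\sigma$) and $\sigma$ is the first time a coordinate reaches $0$. At time $\sigma$ exactly two coordinates remain and their sum is $3n-\sigma$; since $V(W_\sigma)$ is a two-move value, the face formula together with the bounds $0\le g(a,b)\le b$ and $g(a,b)\ge\max(0,b-2a)$ give $\tfrac13(3n-\sigma)\le V(W_\sigma)\le 3n-\sigma$, hence $\tfrac13\E[3n-\sigma]\le V(n,n,n)\le\E[3n-\sigma]$. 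Finally $\E[3n-\sigma]=\Theta(\sqrt n)$: since $\{\sigma\le t\}$ is the event that some symbol occurs $n$ times among $t$ i.i.d.\ uniform draws from $\{1,2,3\}$, a Chernoff bound on $\Bin(t,1/3)$ yields $\Pr[3n-\sigma\ge s]\le 3\exp(-\Omega(s^2/n))$ and hence $\E[3n-\sigma]=O(\sqrt n)$, while for $s\le\sqrt n$ the central limit theorem gives $\Pr[3n-\sigma\ge s]\ge\Pr[\Bin(3n-s,1/3)\ge n]\ge c>0$ and hence $\E[3n-\sigma]=\Omega(\sqrt n)$.
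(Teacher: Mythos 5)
Your reduction is set up correctly: the characterization of unique one-step optimality by $V(a-1,b,c)-V(a,b-1,c)\in(-1,2)$ and its rotations (with the two-sided condition $g(a-1,b)-g(a,b-1)\in(0,3)$ on the faces) is exactly the specialization to the $3$-cycle of the paper's switching lemma (Lemma~\ref{lem:switch}), and your treatment of the $\Theta(\sqrt{n})$ bound is sound once greedy's optimality is granted: the upper bound is essentially the paper's uniform-until-depletion argument, and your lower bound via the explicit greedy value $\mathbb{E}[V(W_\sigma)]$, the martingale bound $g(a,b)\ge\max(0,b-2a)$, and the CLT is a legitimate (and simpler) alternative to the paper's coupling argument in Theorem~\ref{lowergood}, which does not presuppose knowledge of Rei's optimal strategy. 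The genuine gap is that you do not actually prove the inequalities that uniqueness rests on. Your induction handles the bulk (all coordinates at least $2$) by harmonic averaging and the two-move faces by Lipschitz bounds on $g$, but it explicitly defers the ``seam'' states: three-move states with a coordinate equal to $1$, where the relevant difference compares an interior value with a face value $V(\cdot,\cdot,0)$. There the three-term averaging identity fails (the face value obeys the $2/3$--$1/3$ recursion, not the harmonic one), so the propagation argument that carries the bulk does not apply, and these states are unavoidable in the induction from $(n,n,n)$. Worse, uniqueness needs the \emph{strict} inequalities, and, as your own example $V(a-1,1,0)-V(a,0,0)=3^{-(a-1)}-1$ shows, the endpoints are asymptotically attained, so strictness cannot be extracted from soft monotonicity estimates; it requires exactly the sharp boundary analysis you describe as ``the technical heart'' and leave undone. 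As written, the first half of the theorem is therefore not established.

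For comparison, the paper never controls $V$ near the boundary explicitly. Lemma~\ref{lem:switch} proves $S_D(r-\delta_u)<S_D(r-\delta_v)+\alpha(u,v)$ for all restriction vectors at once by taking a minimal counterexample $r$ and letting Rei, at $r-\delta_u$, imitate her optimal play at $r-\delta_v$, substituting $v$ whenever that play would call for $u$; the dichotomy $p_u=1$ versus $p_u<1$ in that proof is what replaces your seam analysis, and strictness comes structurally from $N^-(u)\neq\emptyset$ (every vertex of the $3$-cycle has an in-neighbor) rather than from sharp numerical estimates on $V$. If you want to complete your route, you would need to prove the strict inequalities on the slices $\{c=1\}$ and lines $\{b=c=1\}$ directly; alternatively, a strategy-exchange argument in the style of Lemma~\ref{lem:switch} dispenses with the value computations entirely.
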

	Here and throughout the paper we make use of standard asymptotic notation, see Subsection~\ref{sub:org} for precise definitions.

	\subsection{A More General Setting}
	The problem of studying semi-restricted RPS was first proposed by Spiro~\cite{spiro2022online}. In~\cite{spiro2022online}, a semi-restricted version of another classical zero-sum game, Matching Pennies, was introduced.  This semi-restricted game was motivated by a certain card guessing game studied by Diaconis and Graham \cite{diaconis1981analysis} which has recently received a fair amount of attention, see for example \cite{alimohammadi2021sequential,diaconis2022card,he2021card,krityakierne2022no,liu2021card}.  
	
	While it is easy to generalize these examples to study ``semi-restricted versions'' of arbitrary simultaneous zero-sum games, for simplicity we will focus on games which come from digraphs. However, most of our results hold in broader generality.
	Recall that a \textit{digraph} $D$ consists of a set of vertices $V(D)$ together with a set of ordered pairs of vertices $E(D)$ called \textit{arcs}. We will sometimes denote arcs $(u,v)$ as $u\to v$ or $uv$.  Throughout this paper we will only consider digraphs without self-loops and which have at most one arc between a given pair of vertices.
	
	Given a digraph $D$, we define the \textit{$D$-game} by having two players simultaneously select a vertex of $D$.  If the selected vertices are $u,v$ and $uv\in E(D)$, then the player who chose $u$ gains a point and the player who chose $v$ loses a point, and nothing happens if $uv,vu\notin E(D)$.  For example, if $D$ is the circuit of length 3, i.e.\ the 3-vertex digraph with arcs $1\to 2\to 3\to 1$, then the $D$-game is equivalent to RPS. Observe that the $D$-game is always a symmetric zero-sum game.
	
	We will say that a vector $\r$ is a \textit{restriction vector} (with respect to a digraph $D$) if it is a vector of non-negative integers indexed by $V(D)$.  Given a digraph $D$ and restriction vector $\r$, we define the \textit{semi-restricted $D$-game with parameter $\r$} by having two players Rei and Norman iteratively play the $D$-game for a total of $\sum_u \r_u$ rounds with the restriction that Rei must select each $u\in V(D)$ exactly $\r_u$ times.  We let\footnote{We will define these terms more formally in Section~\ref{sec:optimal}.} $\S_D(\r)$ denote the expected score for Norman when both players play optimally in the semi-restricted $D$ game with parameter $\r$, and throughout we let $\1$ denote the all 1's vector of dimension $|V(D)|$.  For example, Theorem~\ref{thm:RPS} says $\S_D(n\cdot \1)=\Theta(\sqrt{n})$ when $D$ is the circuit of length 3.
	
	Determining optimal strategies for semi-restricted $D$-games in full generality seems impossible.  Nevertheless, we are able to obtain effective bounds on $\S_D(\r)$ for all $D$.  To state these results, given a digraph $D$ and $v\in V(D)$, we define the \textit{out-neighborhood} $N^+(v)=\{u:(v,u)\in E(D)\}$, and similarly we define the \textit{in-neighborhood} $N^-(v)=\{u:(u,v)\in E(D)\}$.  We let $d^+(v)=|N^+(v)|$ and $d^-(v)=|N^-(v)|$ denote the \textit{out-degree} and \textit{in-degree} of $v$, respectively. 
	
	A basic observation is that for all vertices $v\in V(D)$, we have
	\[\S_D(n\cdot \1)\ge (d^+(v)-d^-(v))n.\]
	Indeed, if Norman uses the deterministic strategy of playing $v$ every round, then he will win exactly $d^+(v)n$ rounds and lose exactly $d^-(v)n$ rounds, so he can achieve an expected score of at least $(d^+(v)-d^-(v))n$ with this strategy.  It turns out that this trivial lower bound is close to best possible.
	\begin{theorem}\label{thm:UpperUniform}
		For all digraphs $D$ and $n\ge 1$, we have
		\[\max_v\{d^+(v)-d^-(v)\}n\le \S_D(n\cdot \1)\le \max_v\{d^+(v)-d^-(v)\}n+O_D(\sqrt{n}).\]
	\end{theorem}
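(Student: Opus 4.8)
Since the lower bound is exactly the deterministic strategy for Norman described just before the statement, I would put all the effort into the upper bound, and for that it suffices to exhibit a \emph{single} strategy for Rei under which Norman's expected score is at most $cn+O_D(\sqrt n)$, where throughout I write $c:=\max_v\{d^+(v)-d^-(v)\}$; indeed, $\S_D(n\1)$ is the value of the game for Norman, hence at most what Norman can guarantee against any one fixed strategy of Rei. The plan is to analyze the \emph{proportional strategy}: if at the start of a round Rei's remaining restriction vector is $\q$ and $m=\sum_u\q_u$ rounds remain, she plays each vertex $u$ with probability $\q_u/m$, so in particular she never plays an exhausted vertex.

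The first step is a one-round estimate. If Norman plays $w$ against the proportional distribution, his expected gain that round equals $\frac1m\lrpar{\sum_{u\in N^+(w)}\q_u-\sum_{u\in N^-(w)}\q_u}$. Writing $\q_u=\frac{m}{|V(D)|}+\ep_u$ with $\sum_u\ep_u=0$, the ``degree part'' of this quantity is $\frac{m}{|V(D)|}(d^+(w)-d^-(w))\le\frac{m}{|V(D)|}c$, while the remaining part has absolute value at most $\sum_u\lrabs{\ep_u}$ since $N^+(w)$ and $N^-(w)$ are disjoint. Hence, whatever (possibly adaptive) strategy Norman uses, his conditional expected gain in a round with state $\q$ is at most $\frac{c}{|V(D)|}+\frac1m\sum_u\lrabs{\q_u-\frac{m}{|V(D)|}}$. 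Summed over the $|V(D)|n$ rounds, the first term contributes exactly $cn$, so the theorem reduces to showing that the expected sum of the fluctuation terms $\frac1m\sum_u\lrabs{\q_u-\frac{m}{|V(D)|}}$ is $O_D(\sqrt n)$.

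The key step, which I expect to be the main obstacle, is to view this fluctuation sum through the right lens. Let $\q^{(m)}$ denote Rei's state when $m$ rounds remain, and set $Y_m:=\frac1m\lrpar{\q^{(m)}-\frac{m}{|V(D)|}\1}$. Since the proportional strategy decrements coordinate $u$ with probability $\q^{(m)}_u/m$, a short computation gives $\E\bigl[Y_{m-1}\mid\q^{(m)}\bigr]=Y_m$; thus $(Y_m)$ is a martingale run in reverse time, starting from $Y_{|V(D)|n}=0$, whose step from $m$ to $m-1$ equals $\frac1{m(m-1)}\q^{(m)}-\frac1{m-1}e_u$ (with $e_u$ the $u$-th coordinate vector and $u$ Rei's move) and hence has Euclidean norm $O(1/m)$. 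Orthogonality of martingale increments then gives $\E\norm{Y_m}^2\le\sum_{j\ge m}O(1/j^2)=O(1/m)$, so by Cauchy--Schwarz $\E\lrpar{\frac1m\sum_u\lrabs{\q^{(m)}_u-\frac{m}{|V(D)|}}}=\E\lrpar{\sum_u\lrabs{(Y_m)_u}}\le\sqrt{|V(D)|}\,\E\norm{Y_m}=O_D(1/\sqrt m)$. Summing over $m=1,\dots,|V(D)|n$ yields $O_D(\sqrt{|V(D)|\,n})=O_D(\sqrt n)$, which together with the first step bounds Norman's total expected score, and therefore $\S_D(n\1)$, by $cn+O_D(\sqrt n)$.

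The only genuine idea here is guessing the normalization $Y_m=\frac1m(\q^{(m)}-\frac{m}{|V(D)|}\1)$ that turns the deviation of Rei's state from its ``balanced'' value into a martingale; after that, the increment estimate, the variance bound, and the $\sum_m 1/\sqrt m$ summation are all routine. Two minor points I would still check, but do not expect to cause difficulty: the one-round estimate was derived for an arbitrary state $\q$, so the vanishing of coordinates near the end of the game is harmless; and the passage from the ``proportional-strategy bound'' to a bound on $\S_D(n\1)$ should be written out carefully once $\S_D(\r)$ has been formally defined in Section~\ref{sec:optimal}.
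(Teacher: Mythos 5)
Your proof is correct, and it takes a genuinely different route from the paper's. The paper has Rei play \emph{uniformly} (probability $1/k$ on every vertex regardless of the state) until the first time $T$ some option is depleted, and then arbitrarily; it bounds the post-depletion damage via $\E[kn-T]=O_D(\sqrt n)$, which is obtained by a concentration estimate (Lemma~\ref{techineq}) for the minimum of $k$ negative-binomial times. Your approach instead uses the adaptive \emph{proportional} strategy, which never runs out of options, and controls the per-round excess over $c/k$ via the reverse-time martingale $Y_m=\frac1m\q^{(m)}-\frac1k\1$. Both hinge on the same conceptual observation (playing ``near-uniformly'' caps the per-round gain at $c/k$ plus a fluctuation), but the sources of fluctuation are different: in the paper it is a terminal segment of length $O(\sqrt n)$ where Rei plays badly, whereas for you it is a small drift spread over all rounds. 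Your martingale computation --- $\E[Y_{m-1}\mid\q^{(m)}]=Y_m$, increments of $\ell^2$-norm $O(1/m)$, orthogonality giving $\E\norm{Y_m}^2=O(1/m)$, then Cauchy--Schwarz and $\sum_m m^{-1/2}=O(\sqrt n)$ --- is sound, and I verified the algebra for the increment $Y_{m-1}-Y_m=\frac1{m(m-1)}\q^{(m)}-\frac1{m-1}e_u$. One small advantage of the proportional strategy is that it handles the whole game in one clean bound with no ``play arbitrarily'' phase; a small advantage of the paper's argument is that the same method extends immediately to non-uniform restriction vectors (Theorem~\ref{thm:UpperNonuniform}), where the uniform strategy is replaced by i.i.d.\ draws proportional to $\r$ and one trims small coordinates, whereas adapting your martingale to unequal starting counts would need a bit more care in the norm estimates.
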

	Similar bounds hold for general restriction vectors $\r$, though in this case our bound on the error term is weaker.
	\begin{theorem}\label{thm:UpperNonuniform}
		For all digraphs $D$ and restriction vectors $\r$, we have
		\[ \max_v\left\{\sum_{u\in N^+(v)}\r_u-\sum_{u\in N^-(v)}\r_u\right\}\le \S_D(\r)\le \max_v\left\{\sum_{u\in N^+(v)}\r_u-\sum_{u\in N^-(v)}\r_u\right\}+O_D(M^{2/3}),\]
		where $M=\max_v \r_v$.
	\end{theorem}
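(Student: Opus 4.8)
The lower bound is exactly the observation recorded just before the statement, so the content is the upper bound. Write $A_{vu}=\bone[(v,u)\in E(D)]-\bone[(u,v)\in E(D)]$ for the one-round payoff to Norman when he plays $v$ and Rei plays $u$, and for a restriction vector $\q$ put $\Delta(\q):=\max_{v\in V(D)}\langle A_v,\q\rangle$, where $\langle A_v,\q\rangle:=\sum_u A_{vu}\q_u=\sum_{u\in N^+(v)}\q_u-\sum_{u\in N^-(v)}\q_u$, so the claim is $\S_D(\r)\le\Delta(\r)+O_D(M^{2/3})$. The strategy I would analyze for Rei is the \emph{proportional strategy}: if the multiset of not-yet-used moves corresponds to the vector $\q$ (so that $\sum_u\q_u$ is the number of remaining rounds), play each $u$ with probability $\q_u/\sum_w\q_w$. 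The key structural point is that under this strategy the sequence $u_1,u_2,\dots$ of Rei's moves is precisely a uniformly random arrangement of the multiset with $\r_u$ copies of each $u\in V(D)$, and hence is independent of everything Norman does.

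Granting this, set $\q^{(t)}:=\r-\sum_{s\le t}e_{u_s}$ and $N:=\sum_u\r_u$. Conditioned on $u_1,\dots,u_t$, Rei's next move is uniform over the remaining multiset, so for \emph{any} Norman strategy (whose round-$(t+1)$ move $v_{t+1}$ depends only on $u_1,\dots,u_t$, up to independent coins) one has $\E[A_{v_{t+1},u_{t+1}}\mid u_1,\dots,u_t]=\langle A_{v_{t+1}},\q^{(t)}\rangle/(N-t)\le\Delta(\q^{(t)})/(N-t)$; summing over rounds and using that the game value exists,
\[\S_D(\r)\ \le\ \sum_{t=0}^{N-1}\E\!\left[\frac{\Delta(\q^{(t)})}{N-t}\right].\]
I would then bound $\E[\Delta(\q^{(t)})]$ by concentration. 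For a fixed $v$, $\langle A_v,\q^{(t)}\rangle=\langle A_v,\r\rangle-\sum_{s=1}^t A_{v,u_s}$ is $\langle A_v,\r\rangle$ minus the sum of a uniformly random size-$t$ sub-multiset of the $N$ values $\{A_{v,u}\}$ (all lying in $\{-1,0,1\}$), so it has mean $\tfrac{N-t}{N}\langle A_v,\r\rangle$ and, by the variance formula for sampling without replacement, variance at most $\tfrac{t(N-t)}{N-1}\le\min\{t,N-t\}$. Bounding the maximum over the $|V(D)|=O_D(1)$ vertices by a sum of absolute deviations gives $\E[\Delta(\q^{(t)})]\le\tfrac{N-t}{N}\Delta(\r)+O_D\big(\sqrt{\min\{t,N-t\}}\big)$. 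Substituting this, the main term telescopes, $\sum_{t=0}^{N-1}\tfrac1{N-t}\cdot\tfrac{N-t}{N}\Delta(\r)=\Delta(\r)$, while the error is $O_D\big(\sum_{t=0}^{N-1}\tfrac{\sqrt{\min\{t,N-t\}}}{N-t}\big)=O_D(\sqrt N)$ (split the sum at $t=N/2$; each half is $O(\sqrt N)$). Since $N\le|V(D)|\cdot M$ this yields $\S_D(\r)\le\Delta(\r)+O_D(\sqrt M)$, which in particular gives the claimed $O_D(M^{2/3})$ (and is in fact slightly stronger).

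I expect the only non-routine steps to be: (i) verifying that against the oblivious proportional strategy Norman cannot beat round-by-round greedy play — this is where obliviousness is essential, since Norman's move in a given round influences neither the law of Rei's future moves nor his own future options — and (ii) the elementary concentration input, namely the $O(\min\{t,N-t\})$ variance bound for a without-replacement partial sum together with a union bound over the constantly many vertices. Neither looks like a genuine obstacle, so the argument should close cleanly; the most delicate bookkeeping is checking the displayed identity for Norman's value and the convergence of the error series. (Specializing the same proportional-strategy computation to $\r=n\cdot\1$ recovers the $O_D(\sqrt n)$ error of the preceding theorem, which suggests this is the ``right'' argument.)
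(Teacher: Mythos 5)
Your argument is correct, but it is genuinely different from the paper's proof, and in fact stronger. The paper has Rei first \emph{trim} the vector (discarding options with $\r_w<M^{2/3}$) and then play i.i.d.\ with the fixed probabilities $\r'_w/\sum_u\r'_u$ only until the first option is depleted, after which she plays arbitrarily and concedes a point per round; the error analysis rests on a lower-tail estimate for a sum of geometric random variables (Lemma~3.1, via Janson's inequality) showing the expected number of prematurely lost rounds is $O_D(\r_w^{-1/2}\sum_u\r'_u)$, and the exponent $2/3$ arises precisely from balancing this against the $O_D(kM^{2/3})$ trimming loss. Your renormalizing proportional strategy removes both the trimming and the phase split: since strategies in this paper are functions of $\r$ alone, Rei's move sequence is exactly a uniform random arrangement of the multiset, independent of Norman, so by Fact~2.1 the value against this strategy is $\sum_{t=0}^{N-1}\E[\Delta(\q^{(t)})]/(N-t)$, and the hypergeometric (without-replacement) variance bound $\frac{t(N-t)}{N-1}\le\min\{t,N-t\}$ — whose decay near the end of the game is exactly what the paper's i.i.d.\ analysis lacks — gives the telescoping main term $\Delta(\r)$ plus an $O_D(\sqrt N)=O_D(\sqrt M)$ error. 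This yields $\S_D(\r)\le\Delta(\r)+O_D(M^{1/2})$, which not only implies the stated theorem but would answer the paper's Question~5.4 affirmatively and subsumes the proof of Theorem~1.2; given that, it deserves a careful write-up, but I see no gap: the only points to make fully explicit are (i) the computation showing the proportional strategy's output is a uniform multiset permutation (so the law of $\q^{(t)}$ is multivariate hypergeometric and Norman-independent), (ii) conditioning on the history before inserting $\max_v$ (your displayed conditional expectation should average over Norman's possibly random $v_{t+1}$ before bounding by $\Delta(\q^{(t)})$, a cosmetic fix), and (iii) a terminological caveat: ``oblivious'' in this paper means depending only on $\supp(\r)$, whereas what your step (i) needs — that Rei ignores Norman's play — is automatic from the paper's definition of a strategy for Rei.
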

	
	The bounds of Theorem~\ref{thm:UpperUniform} are asymptotically tight except when $d^+(v)=d^-(v)$ for all $v$, and when this happens we say that $D$ is \textit{Eulerian}. A class of Eulerian digraphs that are of particular interest to us are Eulerian tournaments, where  a \textit{tournament} is a digraph such that either $uv\in E(D)$ or $vu\in E(D)$ is in $D$ for all distinct $u,v\in E(D)$ (equivalently, tournaments are orientations of complete graphs).  For example, the circuit of length 3 is an Eulerian tournament.  More generally, any reasonable extension of RPS (such as ``Rock, Paper, Scissors, Lizard, Spock'' or the infamous ``RPS-25'' consisting of 25 different options) will have a corresponding digraph which is an Eulerian tournament.
	
	By Theorem~\ref{thm:RPS}, we know that semi-restricted Rock, Paper, Scissors has a simple greedy strategy for Rei which is optimal, and given this, it is not too hard to determine the expected score when both players play optimally.  Unfortunately, it turns out that Rei does not have a ``simple'' optimal strategy in the semi-restricted $D$-game for almost every Eulerian tournament $D$, see Theorem~\ref{thm:randomEulerian} for a precise statement. Despite this significant hurdle, we can still determine the expected score when both players play optimally.
	\begin{theorem}\label{eulerian}
		If $D$ is a non-empty Eulerian tournament, then
		\[\S_D(n\cdot \1)=\Theta_D(\sqrt{n}).\]
	\end{theorem}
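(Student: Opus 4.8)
The plan: the upper bound is immediate, since $D$ Eulerian means $d^+(v)=d^-(v)$ for all $v$, so $\max_v\{d^+(v)-d^-(v)\}=0$ and Theorem~\ref{thm:UpperUniform} gives $0\le\S_D(n\cdot\1)\le O_D(\sqrt n)$. So the real content is the lower bound $\S_D(n\cdot\1)=\Omega_D(\sqrt n)$, which I would establish by showing that Norman's \emph{myopic} best response already achieves it against every Rei strategy. Fix a strategy for Rei; let $\mathcal F_t$ be the history before round $t$, let $p^{(t)}$ (an $\mathcal F_t$-measurable probability vector on $V(D)$) be Rei's mixed move in round $t$, let $W_t$ be her realized move, and let $\r^{(t)}$ be her remaining restriction vector at the start of round $t$, so $\r^{(1)}=n\cdot\1$ and $\r^{(3n+1)}=\mathbf 0$. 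For a probability vector $p$ put $h_v(p)=\sum_{u\in N^+(v)}p_u-\sum_{u\in N^-(v)}p_u$, the conditional expected gain to Norman from playing $v$ against $p$. Since a Norman best-responding to the fixed Rei strategy may in particular play, each round, a vertex maximizing $h_v(p^{(t)})$, we get
\[\S_D(n\cdot\1)\ \ge\ G:=\sum_{t=1}^{3n}\E\left[\max_v h_v(p^{(t)})\right].\]
Because $D$ is Eulerian, $\sum_v h_v(p)=\sum_u p_u(d^-(u)-d^+(u))=0$ for every $p$, hence $\max_v h_v(p)\ge\frac1{|V(D)|}\sum_v h_v(p)^+=\frac1{2|V(D)|}\sum_v|h_v(p)|$ (with $x^+=\max(x,0)$). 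Fixing any one vertex $v_0$, it therefore suffices to prove $\E\left[\sum_{t=1}^{3n}|h_{v_0}(p^{(t)})|\right]=\Omega_D(\sqrt n)$.

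For this, put $f_v(\r)=\sum_{u\in N^+(v)}\r_u-\sum_{u\in N^-(v)}\r_u$ and define $M_t=f_{v_0}(\r^{(t)})+\sum_{s<t}h_{v_0}(p^{(s)})$ for $1\le t\le 3n+1$. Then $M$ is a martingale, since its increment equals $h_{v_0}(p^{(t)})-\Delta_t$ with $\Delta_t=\bone[W_t\in N^+(v_0)]-\bone[W_t\in N^-(v_0)]$ satisfying $\E[\Delta_t\mid\mathcal F_t]=h_{v_0}(p^{(t)})$. Moreover $M_1=f_{v_0}(n\cdot\1)=n(d^+(v_0)-d^-(v_0))=0$ and $M_{3n+1}=f_{v_0}(\mathbf 0)+\sum_{t=1}^{3n}h_{v_0}(p^{(t)})=\sum_{t=1}^{3n}h_{v_0}(p^{(t)})$, so $\sum_t|h_{v_0}(p^{(t)})|\ge|M_{3n+1}|$. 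I would then compute the second moment: from $M_1=0$, $\E[M_{3n+1}^2]=\sum_{t=1}^{3n}\E[\Var(\Delta_t\mid\mathcal F_t)]$; since $D$ is a tournament every vertex other than $v_0$ is adjacent to $v_0$, so $\Delta_t^2=\bone[W_t\ne v_0]$ and $\Var(\Delta_t\mid\mathcal F_t)=(1-p^{(t)}_{v_0})-h_{v_0}(p^{(t)})^2$; and by the restriction $\E\sum_t p^{(t)}_{v_0}=\E\,\#\{t:W_t=v_0\}=n$. Altogether
\[\E\left[M_{3n+1}^2\right]=2n-\E\left[\sum_{t=1}^{3n}h_{v_0}(p^{(t)})^2\right].\]

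The proof then splits on the size of $\E\sum_t h_{v_0}(p^{(t)})^2$. If it is at least $n$, then since $|h_{v_0}(p)|\le 1$ and hence $h_{v_0}(p)^2\le|h_{v_0}(p)|$, we immediately get $\E\sum_t|h_{v_0}(p^{(t)})|\ge n$. Otherwise $\E[M_{3n+1}^2]\ge n$; since the increments of $M$ are bounded by $2$, standard fourth-moment estimates for martingales with bounded increments (e.g.\ Burkholder--Davis--Gundy, using $\sum_t(M_{t+1}-M_t)^2\le 12n$) give $\E[M_{3n+1}^4]=O(n^2)$, so Paley--Zygmund applied to $M_{3n+1}^2$ yields $\Pr[\,|M_{3n+1}|\ge\sqrt n/2\,]=\Omega(1)$ and thus $\E|M_{3n+1}|=\Omega(\sqrt n)$. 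In either case $\E\sum_t|h_{v_0}(p^{(t)})|=\Omega_D(\sqrt n)$; handling the finitely many small $n$ by the trivial bound $\S_D(n\cdot\1)\ge 1$, this proves the lower bound, and with the upper bound gives $\S_D(n\cdot\1)=\Theta_D(\sqrt n)$.

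I expect the main obstacle to be less any single computation than arriving at this setup: recognizing that Norman needs no strategy cleverer than the myopic response, so that everything reduces to lower-bounding $G$, and that the Eulerian identity $\sum_v h_v(p)=0$ is what makes $\max_v h_v(p)$ comparable to $\sum_v|h_v(p)|$. The martingale $M$ then makes visible that Rei's two escape routes --- hugging the Eulerian ``balance'', which pins $\E[M_{3n+1}^2]$ near $2n$, versus putting mass on imbalance, which appears directly in $\E\sum_t h_{v_0}(p^{(t)})^2$ --- are \emph{both} costly, the cost being enforced by the budget identity $\E\sum_t p^{(t)}_{v_0}=n$. The genuinely technical point is the anti-concentration step converting $\E[M_{3n+1}^2]=\Omega(n)$ into $\E|M_{3n+1}|=\Omega(\sqrt n)$, which is precisely where the bounded-increment control is used. (Note that this argument determines only the \emph{order} of $\S_D(n\cdot\1)$ and says nothing about optimal strategies, consistent with Theorem~\ref{thm:randomEulerian}.)
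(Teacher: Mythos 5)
Your proposal is correct, but it reaches the lower bound by a genuinely different route than the paper. The paper deduces Theorem~\ref{eulerian} from the general spectral result Theorem~\ref{lowergood}: it first proves the ``punishment'' Lemma~\ref{punish}, which requires $\nul(A_D)=\rmspan(\1)$ and is supplied for Eulerian tournaments by a determinant-parity argument (Lemmas~\ref{eventournament} and~\ref{oddtournament}); it then splits the game into the phases before and after the first depletion and couples Rei's plays with a $\Bin(N,1/k)$ variable, showing that unless Rei pays $\Omega(\sqrt n)$ through non-uniformity, some option runs out $\Omega(\sqrt n)$ rounds early and each such round costs $\alpha_D/k$. You instead fix a single vertex $v_0$, use the unrolled form of Fact~\ref{fact:recurrence} (your $G$) together with the Eulerian identity $\sum_v h_v(p)=0$ to reduce everything to bounding $\E\sum_t|h_{v_0}(p^{(t)})|$, and then exploit the exact martingale identity $\E[M_{\mathrm{end}}^2]=(k-1)n-\E\sum_t h_{v_0}(p^{(t)})^2$, which forces either $\E\sum_t h_{v_0}^2\ge n$ (so $\E\sum_t|h_{v_0}|\ge n$) or $\E[M_{\mathrm{end}}^2]\ge n$, the latter converted to $\E|M_{\mathrm{end}}|=\Omega(\sqrt n)$ via bounded-increment fourth-moment control and Paley--Zygmund. (One cosmetic fix: the game has $|V(D)|\,n$ rounds, not $3n$, so your $2n$ should read $(|V(D)|-1)n$; since $|V(D)|\ge 3$ this only helps, and the budget identity $\E\sum_t p^{(t)}_{v_0}=n$ is unchanged.) The tournament hypothesis enters your argument only through $\Delta_t^2=\bone(W_t\ne v_0)$, and Eulerian-ness only through $\sum_v h_v=0$ and $M_1=0$, so no spectral input is needed at all; this gives a shorter, self-contained proof of Theorem~\ref{eulerian}. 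What the paper's heavier machinery buys is generality: Theorem~\ref{lowergood} applies to every Eulerian digraph with $\nul(A_D)=\rmspan(\1)$, whereas your second-moment bookkeeping relies on every vertex being adjacent to $v_0$ (for non-tournaments $\E[\Delta_t^2\mid\mathcal F_t]$ is only the probability mass Rei places on $N^+(v_0)\cup N^-(v_0)$, and the clean budget identity is lost), so it does not obviously extend beyond tournaments.
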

	In fact, we will show more generally that $\S_D(n\cdot \1)=\Theta_D(\sqrt{n})$ whenever $D$ is an Eulerian digraph satisfying certain spectral conditions, see Theorem~\ref{lowergood}.
	
	Lastly, it is natural to consider not just semi-restricted games where one player is restricted, but also games where both players are restricted.  Indeed, several games of this form appeared in the manga Tobaku Mokushiroku Kaiji~\cite{fukumotokaiji}, and one such game was studied by Spiro~\cite{spiro2022online}.  It turns out that these games are much simpler to analyze.  In particular, in every such game, it is optimal for both players to uniformly at random choose from their multiset of available actions each round, see Appendix~\ref{sec:restricted} for more on this.
	
	\subsection{Organization and Notation}\label{sub:org}
	
	The remainder of this paper is organized as follows.  In Section~\ref{sec:optimal} we provide a more formal definition of $\S_D(\r)$ and determine the optimal strategies for every semi-restricted $D$-game when $D$ has at most 3 vertices, in particular proving the first half of Theorem~\ref{thm:RPS}. In Section~\ref{sec:analysis} we prove general bounds on $\S_D(\r)$ and prove Theorems~\ref{thm:UpperUniform}, \ref{thm:UpperNonuniform}, \ref{eulerian} and the second half of Theorem~\ref{thm:RPS}. In Section~\ref{sec:oblivious} we show that almost every (Eulerian) tournament does not have a ``simple'' optimal strategy for Rei. We end with some open problems in Section~\ref{sec:conclusion}.
	
	We recall the following standard asymptotic notation.  For two functions $f,g$ depending on $n$, we write $g=O(f)$ to mean there exists some $C>0$ such that $g(n)\le C f(n)$ for all $n$.  Similarly $g=\Omega(f)$ indicates $g(n)\ge c f(n)$ for some $c>0$, and $g=\Theta(f)$ means $g=O(f)$ and $g=\Omega(f)$.  We write, for example, $g=O_D(f)$ to indicate that there is some $C>0$ depending on the parameter $D$ such that $g(n)\le C f(n)$ for all $n$.  We write $g=o(f)$ to indicate $\lim_{n\to \infty} \frac{g(n)}{f(n)}=0$.  
	
	Given a vector $\r$, we define the \textit{support} $\supp(\r)$ to be the set of $v$ with $\r_v>0$.  Similarly given a random variable $X$, we define its \textit{support} $\supp(X)$ to be the set of $x$ with $\Pr(X=x)>0$.  Given an event $A$ we let $\bone(A)$ denote the indicator function which is 1 if $A$ occurs and 0 otherwise.
	
	\section{Optimal Strategies}\label{sec:optimal}
	In this section we prove several results about optimal strategies.    We begin by establishing formal definitions for some of the terms that were informally defined in the introduction.
	
	Given a digraph $D$, a \textit{strategy for Norman} in the semi-restricted $D$-game is a function $\N$ from restriction vectors to random variables such that $\supp(\N(\r))\subseteq V(D)$ for all $\r$.  A \textit{strategy for Rei} is a function $\R$ from restriction vectors to random variables such that $\supp(\R(\r))\subseteq \supp(\r)$ for all $\r$.  Informally, this simply says that Norman is allowed to play any vertex of $D$, while Rei can only play vertices in $\supp(\r)$.
	
	We let $\del_v$ be the vector which has a 1 in the position corresponding to $v\in V(D)$ and 0's everywhere else.  Given strategies $\N,\R$ for Norman and Rei respectively and a restriction vector $\r$, we define the \textit{score} (for Norman) $\mathbf{S}_D(\r;\N,\R)$ recursively by setting $\mathbf{S}_D(\r;\N,\R)=0$ if $\r=(0,\ldots,0)$, and otherwise having
	\[
	\mathbf{S}_D(\r;\N,\R)=\begin{cases}
		1+\mathbf{S}_D(\r-\del_{\R(\r)};\N,\R)& \text{if }\R(\r)\in N^+(\N(\r)),\\
		-1+\mathbf{S}_D(\r-\del_{\R(\r)};\N,\R)& \text{if }\R(\r)\in N^-(\N(\r)),\\
		\mathbf{S}_D(\r-\del_{\R(\r)};\N,\R)& \text{otherwise}.
	\end{cases}
	\]
	Informally, this says Norman and Rei play a round of the semi-restricted $D$-game with parameter $\r$, record the change in score based on whether Norman's choice beats Rei's or not, and then the game continues with one action of $\R(\r)$ removed for Rei. Note that $\mathbf{S}_D(\r;\N,\R)$ is a random variable. We define \textit{the expected score} $S_D(\r;\N,\R):=\E[\mathbf{S}_D(\r;\N,\R)]$.
	
	For $\r$ a restriction vector and $\R$ a Rei's strategy, we define
	\[
	\S_D(\r;\R)=\max_{\N} \S_D(\r;\N,\R)\text{\qquad and\qquad} \S_D(\r)=\min_{\R}\S_D(\r;\R),
	\]where the maximum and minimums run through all possible strategies of Norman and Rei as appropriate. The fact that these maximums and minimums exist can be seen, for instance, by observing that the space of possible strategies forms a compact set. Intuitively, $\S_D(\r;\R)$ is Norman's score if he plays optimally and Rei uses strategy $\R$ and has restriction vector $\r$. If $\R$ is such that $\S_D(\r)=\S_D(\r;\R)$ for all vectors $\r$, then we say that $\R$ is an \textit{optimal strategy} for Rei.
	
	We start with a basic fact about the behavior of $\S_D$.
	\begin{fact}\label{fact:recurrence}
		Let $\R$ be a strategy for Rei. If $\r$ is a restriction vector and $p_u=\Pr(\R(\r)=u)$ for each $u\in V(D)$, then
		\[\S_D(\r;\R)=\max_v\left\{\sum_{u\in N^+(v)}p_u-\sum_{u\in N^-(v)}p_u\right\}+\sum_{u} p_u \S_D(\r-\del_u;\R).\]
	\end{fact}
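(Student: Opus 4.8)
The plan is to isolate the first round of play and then induct on the number of remaining rounds. For a restriction vector $\r'$ set $p^{\r'}_u=\Pr(\R(\r')=u)$ and
\[
f_{\r'}(v)=\sum_{u\in N^+(v)}p^{\r'}_u-\sum_{u\in N^-(v)}p^{\r'}_u ,
\]
so that the assertion of the fact is precisely $\S_D(\r;\R)=\max_v f_{\r}(v)+\sum_u p^{\r}_u\,\S_D(\r-\del_u;\R)$. Since $\supp(\R(\r'))\subseteq\supp(\r')$, we have $\r'_u\ge 1$ whenever $p^{\r'}_u>0$, so $\r'-\del_u$ is a legitimate restriction vector and every term above is defined. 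I will in fact prove the displayed identity with $\r$ replaced by an arbitrary restriction vector $\r'$.

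The first step is a decomposition valid for \emph{every} strategy $\N$ of Norman and every $\r'$: writing $q_v=\Pr(\N(\r')=v)$,
\[
\S_D(\r';\N,\R)=\sum_v q_v f_{\r'}(v)+\sum_u p^{\r'}_u\,\S_D(\r'-\del_u;\N,\R).
\]
This is the recursive definition of $\mathbf S_D$ combined with linearity of expectation. Two routine points underlie it: (i) $\N(\r')$ and $\R(\r')$ are selected simultaneously, hence are independent, so the expected first-round payoff is $\sum_{v,u}q_v p^{\r'}_u\big(\bone(u\in N^+(v))-\bone(u\in N^-(v))\big)=\sum_v q_v f_{\r'}(v)$; and (ii) the random choices $\N(\r'')$, $\R(\r'')$ at distinct restriction vectors are mutually independent, and $\mathbf S_D(\r'-\del_u;\N,\R)$ depends only on the choices at restriction vectors that are coordinatewise at most $\r'-\del_u$ (never at $\r'$ itself), so conditioning on $\R(\r')=u$ does not change its distribution, whence $\E\big[\mathbf S_D(\r'-\del_{\R(\r')};\N,\R)\big]=\sum_u p^{\r'}_u\,\S_D(\r'-\del_u;\N,\R)$.

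From the decomposition, the upper bound follows at once: for every $\N$ we have $\sum_v q_v f_{\r'}(v)\le \max_v f_{\r'}(v)$ and $\S_D(\r'-\del_u;\N,\R)\le \S_D(\r'-\del_u;\R)$, so maximizing over $\N$ yields $\S_D(\r';\R)\le \max_v f_{\r'}(v)+\sum_u p^{\r'}_u\,\S_D(\r'-\del_u;\R)$. For the reverse inequality I would exhibit a single global strategy $\N^*$ attaining it, namely $\N^*(\r')$ is the point mass on some vertex maximizing $f_{\r'}(\cdot)$, for every $\r'$. I then claim $\S_D(\r';\N^*,\R)=\S_D(\r';\R)$ for all $\r'$, proved by strong induction on $\sum_u \r'_u$. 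The case $\sum_u \r'_u=0$ is trivial. For the inductive step, applying the decomposition to $\N^*$ gives $\S_D(\r';\N^*,\R)=\max_v f_{\r'}(v)+\sum_u p^{\r'}_u\,\S_D(\r'-\del_u;\N^*,\R)$, which by the inductive hypothesis equals $\max_v f_{\r'}(v)+\sum_u p^{\r'}_u\,\S_D(\r'-\del_u;\R)$; this last quantity is $\ge \S_D(\r';\R)$ by the upper bound just proved and $\le \S_D(\r';\R)$ since $\S_D(\r';\R)=\max_{\N}\S_D(\r';\N,\R)$. Hence all three agree, and in particular $\S_D(\r';\R)=\max_v f_{\r'}(v)+\sum_u p^{\r'}_u\,\S_D(\r'-\del_u;\R)$, which for $\r'=\r$ is the fact.

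The only point requiring a little care is the interaction between Norman's first move and his subsequent play: one cannot naively glue an optimal first-round choice to an optimal continuation, because the continuation values $\S_D(\r'-\del_u;\R)$ for different $u$ may be witnessed by different optimal Norman strategies that disagree on restriction vectors where their games overlap. Defining $\N^*$ globally by the greedy first-move rule and verifying its optimality by induction on $\sum_u\r'_u$ is exactly what avoids this issue; everything else is bookkeeping with the recursion and linearity of expectation.
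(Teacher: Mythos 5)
Your proof is correct. The paper does not actually give a proof of Fact~\ref{fact:recurrence}; it defers to Spiro~\cite[Lemma~2.1]{spiro2022online}, so there is no internal argument to compare against. Your approach — peeling off the first round via linearity of expectation and independence of the simultaneous choices, observing that the expected first-round payoff for a fixed Norman choice $v$ is $f_\r(v)=\sum_{u\in N^+(v)}p_u-\sum_{u\in N^-(v)}p_u$, bounding above by pushing $\max_v$ and $\max_\N$ through separately, and then constructing the single global greedy strategy $\N^*$ and checking by induction on $\sum_u\r_u$ that it simultaneously attains the round-one maximum and an optimal continuation — is exactly the natural argument, and it is the one the cited lemma uses (in the matching-pennies setting). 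The point you single out, that one cannot naively concatenate an optimal first move with per-branch optimal continuations because those continuations are realized by possibly different Norman strategies, is genuinely the only non-trivial step, and your resolution via the global $\N^*$ is the standard and correct fix. The one implicit assumption you lean on, mutual independence of the variables $\R(\r'')$, $\N(\r'')$ across distinct restriction vectors $\r''$, is not spelled out in the paper's definition of a strategy, but it is the intended model (fresh randomness each round, simultaneous play), and you flag it appropriately; without it the decomposition $\E[\mathbf S_D(\r-\del_{\R(\r)};\N,\R)]=\sum_u p_u\,\S_D(\r-\del_u;\N,\R)$ would not follow. No gaps.
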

	Intuitively, this result says that if Rei is playing according to strategy $\c{R}$, then Norman should always play the vertex $v$ which maximizes his expected score for each round, i.e.\ the $v$ such that $\sum_{u\in N^+(v)}p_u-\sum_{u\in N^-(v)}p_u$ is as large as possible.  A close analog of this is proven in Spiro~\cite[Lemma 2.1]{spiro2022online}. One can easily give a formal proof of this results by adjusting the corresponding proof in~\cite{spiro2022online}.

	The key lemma we need for this section is the following. Roughly speaking, it says that in any semi-restricted $D$-game, the optimal expected score will not change dramatically if among all the remaining options of Rei, one action of $u$ is changed into one action of $v$.
	\begin{lemma}\label{lem:switch}
		Given $u,v\in V(D)$ and a restriction vector $\r$ with $\r_u,\r_v\geq 1$, we have
		\begin{equation}\label{eq:switch}
			\S_D(\r-\del_u)\le \S_D(\r-\del_v)+\alpha(u,v),
		\end{equation}where \[\alpha(u,v):=\begin{cases}
			2&\text{if $N^+(u)\cap N^-(v)\neq \emptyset$,}\\
			0&\text{if $N^+(u)\cup N^-(v)=\emptyset$,}\\
			1&\text{otherwise.}\\
		\end{cases}\]Moreover, we have strict inequality in \eqref{eq:switch} when either $N^-(u)\neq \emptyset$ or $\alpha(u,v)=2$.
	\end{lemma}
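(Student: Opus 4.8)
The plan is to induct on the total number of rounds $\sum_w\r_w$, using the Bellman-type recurrence obtained from Fact~\ref{fact:recurrence} by minimizing over Rei's strategies: for every restriction vector $\r'$,
\[
\S_D(\r')=\min_{q}\left[\max_x\left\{\sum_{w\in N^+(x)}q_w-\sum_{w\in N^-(x)}q_w\right\}+\sum_w q_w\,\S_D(\r'-\del_w)\right],
\]
where $q$ runs over probability distributions on $\supp(\r')$ (existence of optimal strategies, noted in the paper, guarantees any such $q$ is achievable together with optimal continuation). The case $u=v$ is trivial since $\alpha(u,u)\ge0$ (and $\alpha(u,u)=1$ whenever $N^-(u)\ne\emptyset$), so assume $u\ne v$. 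The base case is $\r=\del_u+\del_v$: there $\S_D(\del_w)=\bone(N^-(w)\ne\emptyset)$, and $\bone(N^-(v)\ne\emptyset)\le\bone(N^-(u)\ne\emptyset)+\alpha(u,v)$ follows from the definition of $\alpha$ (if $\alpha(u,v)=0$ then $N^-(v)=\emptyset$), with the strict refinement checked the same way.

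For the inductive step, fix an optimal strategy $\R^*$ for Rei, let $q^*_w=\Pr(\R^*(\r-\del_v)=w)$, and set $\gamma:=q^*_u$. Since $\r_v\ge1$ we may define a distribution $q'$ on $\supp(\r-\del_u)$ by shifting $q^*$'s mass off $u$ and onto $v$: $q'_v=q^*_v+\gamma$, $q'_u=0$, $q'_w=q^*_w$ otherwise. Feeding $q'$ into the recurrence at $\r-\del_u$ bounds $\S_D(\r-\del_u)$ above by an ``immediate'' term plus a ``future'' term, which I compare to the corresponding terms for $\S_D(\r-\del_v)$ at the optimal $q^*$. For the immediate term, moving $\gamma$ units of mass from $u$ to $v$ changes each inner quantity $\sum_{w\in N^+(x)}q_w-\sum_{w\in N^-(x)}q_w$ by exactly $\gamma\bigl(\bone(x\to v)-\bone(v\to x)-\bone(x\to u)+\bone(u\to x)\bigr)$, and a one-line case analysis on which arcs are present among $x,u,v$ shows this is at most $\alpha(u,v)$ for every $x$ — indeed this is precisely what the three cases in the definition of $\alpha(u,v)$ encode — so the immediate term grows by at most $\gamma\,\alpha(u,v)$. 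For the future term, using $q'_u=0$ and the identity $\r-\del_u-\del_v=\r-\del_v-\del_u$ to cancel the $\gamma$-contributions, the difference telescopes into
\[
\sum_{w\ne u,v}q^*_w\bigl(\S_D(\r-\del_w-\del_u)-\S_D(\r-\del_w-\del_v)\bigr)+q^*_v\bigl(\S_D(\r-\del_v-\del_u)-\S_D(\r-\del_v-\del_v)\bigr);
\]
each parenthesized difference is an instance of the lemma at a vector of total $\sum_w\r_w-1$ (the $w=v$ term is legitimate because $q^*_v>0$ forces $\r_v\ge2$), so the induction hypothesis bounds it by $\alpha(u,v)$, and since the coefficients sum to $1-\gamma$, the future term grows by at most $(1-\gamma)\,\alpha(u,v)$. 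Adding the two estimates yields $\S_D(\r-\del_u)\le\S_D(\r-\del_v)+\alpha(u,v)$.

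For the strict inequality, assume $N^-(u)\ne\emptyset$ or $\alpha(u,v)=2$. If $\gamma<1$, then some coefficient $q^*_w$ with $w\ne u$ is positive, and the matching difference in the telescoped future term is \emph{strictly} below $\alpha(u,v)$ by the strengthened induction hypothesis, which propagates a strict inequality through the chain. The remaining case $\gamma=1$ means $q^*$ is the point mass at $u$; then the recurrence collapses to $\S_D(\r-\del_v)=\bone(N^-(u)\ne\emptyset)+\S_D(\r-\del_u-\del_v)$, while taking $q'$ (now the point mass at $v$) gives $\S_D(\r-\del_u)\le\bone(N^-(v)\ne\emptyset)+\S_D(\r-\del_u-\del_v)$, hence $\S_D(\r-\del_u)-\S_D(\r-\del_v)\le\bone(N^-(v)\ne\emptyset)-\bone(N^-(u)\ne\emptyset)$; this is $\le0<\alpha(u,v)$ when $N^-(u)\ne\emptyset$ (as $\alpha(u,v)=0$ would force $N^-(v)=\emptyset$) and $\le1<2=\alpha(u,v)$ when $\alpha(u,v)=2$.

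The routine piece is the arc case analysis for the immediate term; the genuine subtlety I anticipate is the strictness argument, in particular the degenerate case $\gamma=1$ where Rei's optimal response at $\r-\del_v$ is the pure action $u$ and the induction hypothesis provides no slack, so the required strict gap must be read off by hand from $N^-(u)$ and $N^-(v)$. (As a sanity check, the non-strict bound also admits an intuitive proof by coupling: Rei plays the $\r-\del_u$ game while simulating an $\r-\del_v$ game with one of her $v$-tokens relabelled as a $u$-token, the two runs agreeing on every round except the single one where that token is ``cashed in'', where Norman can swing the score by at most $\alpha(u,v)$; the recurrence-based induction is preferable mainly because it also delivers the strict refinement.)
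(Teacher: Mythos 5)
Your proof is correct and is essentially the same argument as the paper's: the paper phrases it as a minimal-counterexample argument while you phrase it as induction on $\sum_w \r_w$, but both proofs fix an optimal $q^*$ at $\r-\del_v$, shift the mass $\gamma=q^*_u$ from $u$ onto $v$ to get a suboptimal response at $\r-\del_u$, bound the immediate change by $\gamma\alpha(u,v)$ via the same arc-based case analysis, bound the future change by $(1-\gamma)\alpha(u,v)$ via the inductive/minimality hypothesis, and then isolate the degenerate $\gamma=1$ (point mass at $u$) case by hand to obtain strictness. One tiny exposition note: in your $\gamma=1$ strictness case you write ``$\le 0<\alpha(u,v)$ when $N^-(u)\ne\emptyset$,'' which glosses over the subcase $\alpha(u,v)=0$; there the bound is actually $\bone(N^-(v)\ne\emptyset)-\bone(N^-(u)\ne\emptyset)=0-1=-1<0$, so the conclusion still holds, but the displayed chain as written would read $0<0$ — worth spelling out that extra line, as the paper does by treating $N^-(v)=\emptyset$ separately.
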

	\begin{proof}
		We first assume the condition $N^-(u)\neq \emptyset$ and prove the strict inequality
		$$\S_D(\r-\del_u)<\S_D(\r-\del_v)+\alpha(u,v).$$
		After its proof we briefly discuss the other scenarios of this theorem, i.e.\ when $\alpha(u,v)=2$ or when we conclude non-strict inequalities.
		
		Suppose for contradiction that there is some $\r$ with $\S_D(\r-\del_u)\geq \S_D(\r-\del_v)+\alpha(u,v)$, and choose this $\r$ with $\sum \r_w$ as small as possible. Let $\R$ be an optimal strategy for Rei, and let $p_w=\Pr(\R(\r-\del_v)=w)$. The idea now is for Rei to consider a different (and possibly suboptimal strategy) $\R'$ which has Rei acting as if her restriction vector is $\r-\del_v$ whenever it is $\r-\del_u$.  That is, we would like to choose $\R'$ such that $\Pr[\R'(r-\del_u)=w]=p_w$ for all $w$.  However, this can cause issues when $\r_u=1$, since in this case $\R'$ may choose $u$ with positive probability despite $\r-\del_u$ having no copies of $u$ available for her to play.  We get around this by having $\R'$ select $v$ whenever $\R$ would have selected $u$.  More precisely, we define a strategy $\R'$ by having $\R'(\r')=\R(\r')$ if $\r'\neq \r-\del_u$, and otherwise setting
		
		$$ p'_w:=\Pr(\R'(\r-\del_u)=w)=\begin{cases} p_u+p_v & w=v,\\ 0 & w=u,\\ p_w & w\ne u,v. \end{cases}$$
		Observe that $\R'$ is always a strategy for Rei (i.e.\ $\R'(\r')\sub\supp(\r')$ for all $\r'$) since $\r_v>0$ and $\R$ is a strategy for Rei.
		
		First assume $p_u=1$, which means the optimal strategy $\R$ always has Rei choosing $u$ at $\r-\del_v$. By Fact~\ref{fact:recurrence} and our assumption $N^-(u)\neq\emptyset$, we have\begin{align}
			\S_D(\r-\del_v)= 1+\S_D(\r-\del_u-\del_v).\label{eq:switch1}
		\end{align} The possibly suboptimal strategy $\R'$ always has Rei choosing $v$ at $\r-\del_u$, so Fact~\ref{fact:recurrence} implies\begin{align}
			\S_D(\r-\del_u;\R')=\begin{cases} 1+\S_D(\r-\del_u-\del_v), &\text{if }N^-(v)\neq\emptyset\\ \S_D(\r-\del_u-\del_v), &\text{if }N^-(v)=\emptyset\end{cases}. \label{eq:switch2}
		\end{align} Hence, if $N^-(v)=\emptyset$, we have
		$$\S_D(\r-\del_u)\leq \S_D(\r-\del_u;\R') < \S_D(\r-\del_v)\leq \S_D(\r-\del_v)+\alpha(u,v),$$ 
		where the first inequality used that $\R'$ is possibly suboptimal, the second used \eqref{eq:switch1} and \eqref{eq:switch2}, and the last used that $\alpha(u,v)\geq 0$ always. If $N^-(v)\neq \emptyset$, we have $$\S_D(\r-\del_u)\leq \S_D(\r-\del_u;\R')= \S_D(\r-\del_v)< \S_D(\r-\del_v)+\alpha(u,v),$$ since in this case $\alpha(u,v)>0$. Either way contradicts our assumption, so we must have $p_u\neq 1$.
		
		Define $e_w=\sum_{w'\in N^+(w)} p_{w'}-\sum_{w'\in N^-(w)} p_{w'}$, which is the expected score when one round is played at $\r-\del_v$ when Rei follows $\R$ and Norman picks $w$. By Fact~\ref{fact:recurrence} and $\R$ being optimal, we have 
		\begin{equation}\S_D(\r-\del_v)=\max_{w}e_w+\sum_w p_w \S_D(\r-\del_v-\del_w).\label{eq:R}\end{equation} Similarly, we define $e'_w=\sum_{w'\in N^+(w)} p'_{w'}-\sum_{w'\in N^-(w)} p'_{w'}$.  By definition of $p'_w$ (which differs by either 0 or $\pm p_u$ from $p_w$), we have
		\begin{align*}e'_w-e_w&=\Big[\sum_{w'\in N^+(w)}p_{w'}'-p_{w'} \Big]- \Big[\sum_{w'\in N^-(w)}p_{w'}'-p_{w'} \Big]\\ &=\left[p_u \bone(v\in N^+(w))-p_u \bone(u\in N^+(w))\right]-\left[p_u \bone(v\in N^-(w))-p_u \bone(u\in N^-(w))\right]\\&\le p_u \bone(v\in N^+(w))+p_u \bone(u\in N^-(w))\\ &=p_u\bone(w\in N^-(v))+p_u\bone(w\in N^+(u))\le p_u \alpha(u,v),\end{align*}
		where this last step uses the definition of $\alpha(u,v)$. Using this together with Fact~\ref{fact:recurrence} gives
		\begin{equation}\S_D(\r-\del_u;\R')\leq \max_{w}e_w+ p_u\alpha(u,v)+\sum_{w\ne u,v} p_w \S_D(\r-\del_u-\del_w)+(p_u+p_v)\S_D(\r-\del_u-\del_v),\label{eq:R'}\end{equation}
		where implicitly we used that $\S_D(\r-\del_u-\del_w;\R')=\S_D(\r-\del_u-\del_w)$ since $\R'$ agrees with the optimal strategy $\R$ for every vector except $\r-\del_u$.  Since $\S_D(\r-\del_u)\leq \S_D(\r-\del_u;\R')$, we can combine \eqref{eq:R} and \eqref{eq:R'} and obtain
		\begin{align}
			\notag  \S_D(\r-\del_u)-\S_D(\r-\del_v)&\leq \S_D(\r-\del_u;\R')-\S_D(\r-\del_v)\\
			\notag &\leq p_u\alpha(u,v)+\sum_{w\ne u} p_w(\S_D(\r-\del_u-\del_w)-\S_D(\r-\del_v-\del_w))\\
			\label{eq:switch3}  &< p_u\alpha(u,v)+\sum_{w\ne u} p_w\alpha(u,v)\\
			\notag &=\alpha(u,v),
		\end{align} where the inequality \eqref{eq:switch3} used the assumptions that $p_u\neq 1$ (so $p_w>0$ for some $w\ne u$) and that $\r$ was a minimum counterexample to the desired strict inequality. As this contradicts our assumption, we conclude our desired strict inequality.
		
		This completes the proof in the case $N^-(u)\ne\emptyset$.  If $N^-(u)=\emptyset$ but $\alpha(u,v)=2$, the condition $p_u=1$ implies the following identity, which is slightly different from \eqref{eq:switch1},
		$$\S_D(\r-\del_v)= \S_D(\r-\del_v-\del_u).$$
		The identity \eqref{eq:switch2} still holds for $p_u=1$. Together they imply $|\S_D(\r-\del_u)-\S_D(\r-\del_v)|\leq 1<\alpha(u,v)$. So $p_u=1$ still contradicts the assumption for $\r$, and when $p_u\neq 1$ we can argue as above and conclude \eqref{eq:switch} with a strict inequality.  If we are in the situation where $N^-(u)=\emptyset$ and $\alpha(u,v)\neq 2$, then the same argument as above gives \eqref{eq:switch3} with a non-strict inequality, giving the desired result.
	\end{proof}
	
	We can use Lemma~\ref{lem:switch} to establish the optimal strategies for Rei for every digraph on at most 3 vertices.  Most of these cases are trivial.  Indeed, recall that a vertex $v$ in a digraph is called a \emph{source} if $N^-(v)=\emptyset$. If there is a source $v$ such that $N^+(v)$ contains all of the non-source vertices, then an optimal strategy for Norman is to always choose $v$ (which will allow him to win every round that Rei does not play a source, and Norman can only draw when Rei plays a source). Facing this strategy of Norman, every strategy of Rei gives the same outcome, hence is trivially optimal. One can check that such a source exists in every digraph with at most 3 vertices except for the circuit on 3 vertices and the directed path on 3 vertices, so it remains to address these two cases.
	
	Recall that if $D$ is the directed cycle $1\to 2\to 3\to 1$, then the $D$-game is just RPS. In  Section~1 we mentioned the following greedy strategy $\R_g$ for Rei in this semi-restricted $D$-game: When she still has all three options available, she chooses each with uniform probability; When she has two options, she chooses the stronger one with probability $2/3$ and the weaker one with probability $1/3$; When she has only one option, she chooses this with probability 1. The next theorem implies the first half of Theorem~\ref{thm:RPS}. 
	\begin{theorem}\label{thm:RPS2}
		The greedy strategy $\R_g$ is the unique optimal strategy for Rei in the semi-restricted $D$-game when $D$ is the directed cycle $1\to 2\to 3\to 1$.
	\end{theorem}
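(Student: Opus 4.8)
The plan is to induct on $|\r|:=\sum_w \r_w$. Combining Fact~\ref{fact:recurrence} with the standard observation that an optimal Rei strategy may be taken optimal on every smaller restriction vector, one obtains the dynamic-programming identity
\[
\S_D(\r)=\min_{p}\Big\{\max_v\Big(\sum_{u\in N^+(v)}p_u-\sum_{u\in N^-(v)}p_u\Big)+\sum_u p_u\,\S_D(\r-\del_u)\Big\},
\]
where $p$ ranges over probability distributions with $\supp(p)\subseteq\supp(\r)$, together with the characterization: a Rei strategy $\R$ is optimal if and only if, for every $\r$, the law of $\R(\r)$ attains this minimum. Hence for the directed $3$-cycle $D$ it suffices to show that the minimum is attained \emph{uniquely}, and at the greedy distribution $p^g_{\r}$, for every $\r$. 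The cases $|\supp(\r)|\le 1$ are immediate (Rei's move is forced), so I would assume $|\supp(\r)|\in\{2,3\}$ and abbreviate $a_i:=\S_D(\r-\del_i)$ for $i\in\supp(\r)$.

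Two inputs drive everything. First, since every vertex of $D$ has in-degree $1$, Lemma~\ref{lem:switch} always yields the \emph{strict} inequality $\S_D(\r-\del_u)<\S_D(\r-\del_v)+\alpha(u,v)$; a short check gives $\alpha(u,v)=1$ when $uv\in E(D)$ or $u=v$ and $\alpha(u,v)=2$ when $vu\in E(D)$, so when $|\supp(\r)|=3$ the $a_i$ satisfy the cyclic gap bounds $a_i-a_{i+1}<1$, hence $a_i-a_{i+1}\in(-2,1)$ for each $i \bmod 3$ (since the three differences sum to $0$); and when $|\supp(\r)|=2$, say Rei may play $\{j,k\}$ with $j\to k\in E(D)$, one gets $a_j-a_k<1$ and $a_k-a_j<2$. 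Second, the bracketed one-round quantity is exactly the value $V$ to the minimizing player of a $3\times 3$ matrix game with payoff matrix $\widetilde M$ equal to the cyclic RPS matrix with $(a_1,a_2,a_3)$ added to every row, Rei being the minimizer. The greedy distribution (uniform, resp.\ weight $2/3$ on the stronger available vertex) is a feasible choice for Rei attaining $V=\tfrac13\sum_i a_i$, resp.\ $V=\tfrac13(2a_j+a_k)$.

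The core of the argument is to use the gap bounds to exhibit, for Norman, an explicit mixed strategy $q^*$ supported on \emph{all} of the relevant vertices — all three when $|\supp(\r)|=3$, and two suitably chosen vertices when $|\supp(\r)|=2$ — which guarantees value exactly $V$ against every Rei distribution. Finding $q^*$ amounts to solving a routine $2$- or $3$-variable linear system, and the precise role of the gap bounds from Lemma~\ref{lem:switch} is that they force every coordinate of $q^*$ to be strictly positive. Given such a $q^*$: if $p^*$ is any Rei minimizer then $(\widetilde M p^*)_v\le V$ for every row $v$, while the $q^*$-average $\sum_v q^*_v(\widetilde M p^*)_v$ is at least $V$; since $q^*$ has positive weight on every relevant row, every such $(\widetilde M p^*)_v$ must equal $V$. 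For the cyclic matrix this system of equalities has the unique solution $p^*=p^g_{\r}$, and as $p^g_{\r}$ does attain the minimum, the induction closes and the theorem follows.

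I expect the main obstacle to be precisely the verification in the previous paragraph: one must check that it is the \emph{strict} gap inequalities (and not merely the non-strict ones) that keep $q^*$ a genuine full-support strategy — this is exactly where the ``strict inequality'' clause of Lemma~\ref{lem:switch} is indispensable, and it is also what makes the final complementary-slackness step pin down $p^*$ uniquely rather than only up to the kernel of a single row. The degenerate supports $|\supp(\r)|\le 2$ have to be treated separately, but there the optimization collapses to one real variable and the computation is straightforward.
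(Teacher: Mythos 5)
Your approach is correct and genuinely different from the paper's. The paper proceeds by direct perturbation: for each restriction vector $\r$ it assumes some coordinate of the optimal distribution deviates from greedy, constructs an explicit nearby strategy $\R'$ that strictly improves on $\R$, and derives a contradiction through a case analysis on $|\supp(\r)|$ and, when $|\supp(\r)|=3$, on how many of the three differences $p_2-p_3,\ p_3-p_1,\ p_1-p_2$ attain the maximum. You instead recast the one-round recurrence from Fact~\ref{fact:recurrence} as a $3\times 3$ matrix game and argue via linear-programming duality: you build an equalizing Norman strategy $q^*$ supported on all relevant rows, observe that the strict gap bounds from Lemma~\ref{lem:switch} are precisely what force every coordinate of $q^*$ to be positive, and then invoke complementary slackness to conclude that any minimizer $p^*$ must satisfy $(\widetilde M p^*)_v=V$ on all of $\supp(q^*)$, which (combined with $p^*$ being a probability vector supported on $\supp(\r)$ and $\1$ spanning $\nul(A_D)$) pins $p^*$ down uniquely as the greedy distribution. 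Both arguments turn on the same key input --- the strict version of Lemma~\ref{lem:switch} --- but your dual formulation isolates cleanly where uniqueness comes from, namely the full support of $q^*$. One small slip: in the two-option case the greedy value is $V=\tfrac13(1+2a_j+a_k)$, not $\tfrac13(2a_j+a_k)$; you dropped the one-round contribution $\max_v (A_D p^g)_v=1/3$. This does not affect the structure of the argument, and the remaining computations (solving the $2\times2$ and $3\times3$ equalization systems and checking $q^*>0$ from $a_i-a_{i+1}<1$) go through exactly as you anticipate.
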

	\begin{proof}
		Let $\R$ be an optimal strategy for Rei. We shall prove that $\Pr(\R(\r)=u)=\Pr(\R_g(\r)=u)$ for all vertices $u$ and vectors $\r$ by induction on the quantity $\r_1+\r_2+\r_3$. The base case when $\r_1+\r_2+\r_3=0$ is trivial. Now suppose $\Pr(\R(\r')=u)=\Pr(\R_g(\r')=u)$ for all vertices $u$ and vectosr $\r'$ satisfying $\r'_1+\r'_2+\r'_3<\r_1+\r_2+\r_3$.  Let $p_u:=\Pr(\R(\r)=u)$ for each $u\in \{1,2,3\}$.
		
		When $|\supp(\r)|=1$, by the definition of a strategy for Rei, we have $\Pr(\R(\r)=u)=\Pr(\R_g(\r)=u)$ for all $u$.
		
		When $|\supp(\r)|=2$, we can assume without loss of generality that $\r_1,\r_2>0$.  Assume for contradiction $p_1=2/3+\epsilon$ with $\epsilon>0$, which means $p_2=1/3-\epsilon$. Applying Fact~\ref{fact:recurrence} to both $\R$ and $\R_g$ gives
		\begin{align*}
			&\S_D(\r;\R)=\left(\frac{1}{3}+2\epsilon\right) + \left(\frac{2}{3}+\epsilon\right) \S_D(\r-\del_1)+\left(\frac{1}{3}-\epsilon\right)\S_D(\r-\del_2),\\
			&\S_D(\r;\R_g)=\frac{1}{3}+\frac{2}{3}\S_D(\r-\del_1)+\frac{1}{3}\S_D(\r-\del_2),
		\end{align*} where here we used the assumptions that $\R$ is optimal and that $\R_g$ coincides with $\R$ on $\r'=\r-\del_u$ for $u=1,2$. These two identities imply
		$$\S_D(\r;\R_g)-\S_D(\r;\R)=\epsilon(\S_D(\r-\del_2)-\S_D(\r-\del_1) -2)<0,$$
		where the inequality is by Lemma~\ref{lem:switch} with $\alpha(2,1)=2$ (since $N^+(2)\cap N^-(1)=\{3\}$). This contradicts $\R$ being an optimal strategy, so we can not have $p_1=2/3+\epsilon$ for any $\epsilon>0$. If $p_1=2/3-\epsilon$ for some $\epsilon>0$, we now have
		\[\S_D(\r;\R)=\left(\frac{1}{3}+\epsilon\right) + \left(\frac{2}{3}+\epsilon\right) \S_D(\r-\del_1)+\left(\frac{1}{3}-\epsilon\right)\S_D(\r-\del_2).\]
		Similar to before we find
		$$ \S_D(\r;\R_g)-\S_D(\r;\R)=\epsilon(\S_D(\r-\del_1)-\S_D(\r-\del_2) -1)<0,$$
		where the inequality is again by Lemma~\ref{lem:switch}. This is also a contradiction, so we must have $p_1=2/3$ and $p_2=1/3$, which means $\Pr(\R(\r)=u)=\Pr(\R_g(\r)=u)$ for all $u$ as desired.
		
		When $|\supp(\r)|=3$, we let $\Delta:=\max\{p_2-p_3,p_3-p_1,p_1-p_2\}$. Note that $\Delta\geq 0$.  By Fact~\ref{fact:recurrence} and the assumption of $\R$ being optimal, we have
		$$\S_D(\r;\R)=\Delta+p_1\S_D(\r-\del_1)+p_2\S_D(\r-\del_2)+p_3\S_D(\r-\del_3).$$
		Our proof now divides into three cases.
		
		\medskip
		
		\noindent \emph{Case 1.} Exactly one term in $\{p_2-p_3,p_3-p_1,p_1-p_2\}$ equals $\Delta$.
		
		Without loss of generality, we can assume $p_2-p_3=\Delta$. Take
		$$\epsilon=\frac{1}{3}\min\{|\Delta-(p_3-p_1)|, |\Delta-(p_1-p_2)|\}>0.$$
		Consider a different strategy $\R'$ for Rei with
		
		$$\Pr(\R'(\r)=1)=p_1,\ \Pr(\R'(\r)=2)=p_2-\epsilon,\ \Pr(\R'(\r)=3)=p_3+\epsilon,$$
		and $\Pr(\R'(\r')=u)=\Pr(\R(\r')=u)$ for all vertices $u$ and vectors $\r'\ne \r$. By Fact~\ref{fact:recurrence} and our choice of $\epsilon$, we have
		$$\S_D(\r;\R')=(\Delta-2\epsilon)+p_1\S_D(\r-\del_1)+(p_2-\epsilon)\S_D(\r-\del_2)+(p_3+\epsilon)\S_D(\r-\del_3).$$
		Thus
		
		$$\S_D(\r;\R')-\S_D(\r;\R)=\epsilon(\S_D(\r-\del_3)-\S_D(\r-\del_2)-2)<0,$$
		where the inequality is by Lemma~\ref{lem:switch}. This contradicts $\R$ being optimal, so Case 1 can not happen.
		
		\medskip
		
		\noindent \emph{Case 2.}  Exactly two expressions in $\{p_2-p_3,p_3-p_1,p_1-p_2\}$ equal $\Delta$.
		
		Without loss of generality, we assume $p_2-p_3=p_3-p_1=\Delta$. This implies $p_3=1/3$, $p_2=1/3+\Delta$, and $p_1=1/3-\Delta$. Note that $\Delta>0$, as otherwise $p_1-p_2$ also attains $\Delta$. Similar to the above arguments, we can compute
		$$\S_D(\r;\R_g)-\S_D(\r;\R)=\Delta(\S_D(\r-\del_1)-\S_D(\r-\del_2)-1)<0,$$
		where the inequality is by Lemma~\ref{lem:switch}. This gives a contradiction, so Case 2 can not happen.
		
		\medskip
		
		\noindent \emph{Case 3.}  Every expression in $\{p_2-p_3,p_3-p_1,p_1-p_2\}$ equals $\Delta$.
		
		We have $p_2-p_3=p_3-p_1=p_1-p_2$, which implies $p_1=p_2=p_3=1/3$ and hence $\Pr(\R(\r)=u)=\Pr(\R_g(\r)=u)$ for all $u$. As neither of the previous two cases happen, we must be in this case, so we obtain the desired conclusion.
	\end{proof}
	
	The only remaining digraph on 3 vertices is the directed path, and in this case it turns out there are many optimal strategies for Rei.
	\begin{theorem}\label{thm:3path}
		The optimal strategies for Rei in the semi-restricted $D$-game when $D$ is the directed path $1\to 2\to 3$ are exactly those $\R$ satisfying $\Pr(\R(\r)=3)=1/2$ whenever $\{3\}\subsetneq \supp(\r)$.
	\end{theorem}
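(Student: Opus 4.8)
The plan is to prove, by strong induction on $\sum_u \r_u$, a pair of statements simultaneously: \textbf{(I)} for every restriction vector $\mathbf{t}$ with $\mathbf{t}_2\ge 1$ we have $\S_D(\mathbf{t})=\S_D(\mathbf{t}-\del_2+\del_1)+1$, i.e.\ converting one of Rei's obligations of $2$ into an obligation of $1$ lowers Norman's value by exactly $1$; and \textbf{(II)} the optimal strategies at $\mathbf{t}$ are exactly those $\R$ with $\Pr(\R(\mathbf{t})=3)=1/2$ when $\{3\}\subsetneq\supp(\mathbf{t})$, and all strategies otherwise. Statement (I) is the workhorse: it is exactly what makes Norman's value independent of how Rei splits her remaining probability between $1$ and $2$, hence the reason the optimal set is so large. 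Note that Lemma~\ref{lem:switch} supplies the ``$\le$'' half of (I) for free (as $\alpha(1,2)=1$), so the content of (I) is the matching lower bound, which is genuinely a feature of the exact recursion rather than of switching.

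To run the inductive step at a vector $\r$, write $p_u=\Pr(\R(\r)=u)$ for a putative optimal strategy $\R$; optimality forces $\R$ to be optimal at every strictly smaller vector, so Fact~\ref{fact:recurrence} applies, and for $D=1\to 2\to 3$ a direct evaluation of the per-round term $\max_v\{\sum_{u\in N^+(v)}p_u-\sum_{u\in N^-(v)}p_u\}$ shows it equals $\max\{p_2,\,p_3-p_1\}$, which by $p_1+p_2+p_3=1$ is $p_2+\max\{0,\,2p_3-1\}$. Hence
\[
\S_D(\r;\R)=\max\{0,\,2p_3-1\}+p_1\,\S_D(\r-\del_1)+p_2\bigl(1+\S_D(\r-\del_2)\bigr)+p_3\,\S_D(\r-\del_3).
\]
When $\r$ has full support, the inductive form of (I) applied to $\r-\del_1$ yields $\S_D(\r-\del_1)=\S_D(\r-\del_2)+1$, so the coefficients of $p_1$ and $p_2$ coincide and the right-hand side depends only on $t:=p_3$, namely $\max\{0,\,2t-1\}+(1-t)\S_D(\r-\del_1)+t\,\S_D(\r-\del_3)$. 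The strict forms of Lemma~\ref{lem:switch} (available because every vertex other than the source $1$ has a nonempty in-neighborhood; here $\alpha(3,1)=0$ and $\alpha(1,3)=2$) give $\S_D(\r-\del_3)<\S_D(\r-\del_1)<\S_D(\r-\del_3)+2$, which makes this function of $t$ strictly decreasing on $[0,1/2]$ and strictly increasing on $[1/2,1]$. Thus its unique minimum is at $t=1/2$, with value $\tfrac12\bigl(\S_D(\r-\del_1)+\S_D(\r-\del_3)\bigr)$, while $p_1$ and $p_2$ may be split arbitrarily. This establishes (II) for full-support $\r$ and records a recursion for $\S_D(\r)$.

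The remaining support patterns are handled the same way and are in fact simpler. If $\supp(\r)=\{2,3\}$ the per-round term is $\max\{p_2,p_3\}$ and Lemma~\ref{lem:switch} gives $|\S_D(\r-\del_2)-\S_D(\r-\del_3)|<1$, forcing a \emph{unique} optimum at $p_2=p_3=1/2$; if $\supp(\r)=\{1,3\}$ the per-round term is $\max\{0,p_3-p_1\}$ and the same strict inequalities $\S_D(\r-\del_3)<\S_D(\r-\del_1)<\S_D(\r-\del_3)+2$ as before force a unique optimum at $p_1=p_3=1/2$; if $3\notin\supp(\r)$ the per-round term is simply $p_2$ and, applying (I) repeatedly on the smaller vectors together with the fact that $\S_D(\r)=0$ whenever $\supp(\r)\subseteq\{1\}$, one finds $\S_D(\r;\R)$ collapses to the constant $\r_2$, so \emph{every} strategy is optimal; and if $\supp(\r)\subseteq\{3\}$ there is nothing to prove. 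In each case this agrees with (II). Finally, to close the induction one must re-establish (I) at the current level: combining the recursions just obtained for $\S_D(\r)$ and for $\S_D(\r-\del_2+\del_1)$ (which has the same $\ell^1$-norm), every sub-term involving a vector with a positive $2$-coordinate is shifted by the inductive (I), and the additive per-round constants cancel so that the two values differ by exactly $1$.

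The step requiring the most care is the bookkeeping that makes (I) self-propagating: one must check in each support case that the recursion for $\S_D(\r)$ has the right shape so that, when compared with the recursion for $\S_D(\r-\del_2+\del_1)$, every discrepancy is of the type handled by (I) at a smaller level and all the additive constants line up to leave precisely $1$. The rest reduces to the observation that, because vertices $2$ and $3$ each have a nonempty in-neighborhood, Lemma~\ref{lem:switch} delivers the needed inequalities in strict form, which is exactly what pins the optimum to $p_3=1/2$ (rather than an interval) whenever vertex $3$ is not Rei's only remaining option.
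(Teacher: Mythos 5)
Your argument is correct, and its core coincides with the paper's: both are inductions on $\sum_u \r_u$ powered by Fact~\ref{fact:recurrence} and the strict cases of Lemma~\ref{lem:switch}, and your identity (I) is exactly the paper's auxiliary identity $\S_D(\r)=\S_D(\r-\del_1+\del_2)-1$ for $\r_1>0$, which the paper likewise carries along the same induction. The organizational difference lies in how the characterization at a fixed vector is extracted: you substitute the inductive (I) into the recursion so that the one-round value becomes an explicit piecewise-linear function of $t=p_3$ alone, and the strict inequalities $\S_D(\r-\del_3)<\S_D(\r-\del_1)<\S_D(\r-\del_3)+2$ and $|\S_D(\r-\del_2)-\S_D(\r-\del_3)|<1$ (all correctly extracted from Lemma~\ref{lem:switch}, using $\alpha(1,3)=2$ to get strictness at the source) pin the unique minimizer at $t=1/2$, yielding necessity and sufficiency in one stroke; the paper instead proves necessity by perturbing a putative optimal strategy and deriving a contradiction, and proves sufficiency by comparing an arbitrary admissible strategy against a canonical one (with $p_2=0$ whenever $1\in\supp(\r)$) while propagating the same exchange identity. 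Your route makes it transparent why the optimum is a whole segment (the coefficients of $p_1$ and $p_2$ coincide precisely because of (I)), at the cost of re-deriving (I) at each level by a support-by-support comparison of the recursions for $\S_D(\r)$ and $\S_D(\r-\del_2+\del_1)$; I checked the representative cases (full support, $\supp(\r)=\{2,3\}$ against $\{1,3\}$ or full support, and $3\notin\supp(\r)$ where the value collapses to $\r_2$) and the shifted sub-terms and per-round constants do combine to give exactly $1$, so the bookkeeping you defer is routine and mirrors the case analysis in the paper's appendix. One point worth making explicit in a final write-up: the theorem's ``exactly'' refers to the paper's global notion of optimality ($\S_D(\r';\R)=\S_D(\r')$ for all $\r'$), which is what licenses replacing the continuation terms $\S_D(\r-\del_u;\R)$ by $\S_D(\r-\del_u)$ in your necessity direction; with that reading your simultaneous induction closes.
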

	That is, Rei will always play 3 with probability 1/2 provided she can play 3 and at least one other option. The intuition for Theorem~\ref{thm:3path} is as follows: in any particular round, suppose Rei plays $v$ with probability $p_v$.  It is clear that Norman will only ever play 1 or 2 for his turn, and for a given round these gives him expected payoffs of $p_2$ and $p_1-p_3$, respectively.  These two payoffs are equal precisely when $p_3=1/2$.  Thus the strategies in Theorem~\ref{thm:3path} are exactly those such that Norman is indifferent between which (reasonable) vertex he should play.  The proof of Theorem~\ref{thm:3path} is similar to that of Theorem~\ref{thm:RPS2}, and as such we have relegated its proof to Appendix~\ref{sec:path}.

	\section{Bounding the Score}\label{sec:analysis}
	For ease of notation, we will assume throughout this section that $k=|V(D)|$ is the number of vertices of our digraphs.
	
	\subsection{General bounds}
	Our proofs will need the following technical result, which will be used to control the expected number of rounds remaining in the semi-restricted $D$-game after one option has been depleted.
	\begin{lemma}\label{techineq}
		Let $p\in (0,1]$ and $N$ be an integer. Let $X=X_1+\dots+X_N$ where the $X_i$ are iid random geometric variables with parameter $p$. Then
		\[ \E[(\E X-X)\cdot \bone(X\leq \E X)]=O(\sqrt{N}/p)\]
	\end{lemma}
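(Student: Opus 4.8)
The key observation is that $(\E X - X)\cdot\bone(X\le \E X)$ is exactly the positive part $(\E X - X)^+ = \max(\E X - X,0)$, and this is bounded pointwise by $\lrabs{X-\E X}$. So the plan is simply to pass to expectations and apply a second-moment (concentration) estimate, which is precisely the source of the improvement from the trivial bound $\E X = N/p$ down to $\sqrt N/p$.

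Concretely, I would first note that $\E X = N\cdot\E X_1 = N/p$ (or $N(1-p)/p$ with the other convention for the geometric), so $\E X - X$ is a random variable, and then write
\[
\E\bigl[(\E X - X)\cdot\bone(X\le \E X)\bigr]\le \E\lrabs{X-\E X}\le \sqrt{\E\bigl[(X-\E X)^2\bigr]}=\sqrt{\Var X},
\]
where the second inequality is Cauchy--Schwarz (equivalently, Jensen applied to $t\mapsto t^2$). It then remains to control $\Var X$. Since the $X_i$ are i.i.d.\ we have $\Var X = N\cdot\Var X_1$, and a geometric random variable with parameter $p$ has variance $(1-p)/p^2\le 1/p^2$; this holds regardless of whether one takes the support to be $\{1,2,\dots\}$ or $\{0,1,2,\dots\}$, since the two conventions differ only by a deterministic shift. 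Hence $\sqrt{\Var X}=\sqrt{N(1-p)}\,/p\le \sqrt N/p$, which is the claimed $O(\sqrt N/p)$ bound, in fact with explicit constant $1$.

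I do not expect any genuine obstacle here; the only mild care needed is the variance formula for the geometric distribution and the remark that the indicator just truncates $\E X - X$ to its positive part. If one wants a slightly better constant, one can instead invoke the identity $\E[Y^+]=\tfrac12\E\lrabs{Y}$, valid for any mean-zero $Y$ (here $Y=\E X - X$), which sharpens the bound to $\tfrac12\sqrt{N(1-p)}\,/p$; but this refinement is not needed for the stated result.
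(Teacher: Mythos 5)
Your proof is correct, and it takes a genuinely different and more elementary route than the paper. You observe that the quantity is just $\E[(\E X-X)^+]$, bound it by $\E\lrabs{X-\E X}\le\sqrt{\Var X}$ via Cauchy--Schwarz, and compute $\Var X=N(1-p)/p^2$, giving the bound $\sqrt{N}/p$ with an explicit constant (even $\tfrac12\sqrt{N}/p$ via the mean-zero identity $\E[Y^+]=\tfrac12\E\lrabs{Y}$); all steps, including the indifference to the two geometric conventions, are sound, and the paper's stated $\E X=N/p$ confirms the support-$\{1,2,\dots\}$ convention anyway. The paper instead invokes Janson's exponential lower-tail inequality for sums of geometrics, writes the truncated expectation as a sum/integral of tail probabilities, and reduces to showing $\int_0^1\exp(-N(\lambda-1-\ln\lambda))\,d\lambda=O(1/\sqrt N)$, which it leaves as a routine Laplace-type estimate. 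Your second-moment argument is shorter, self-contained, and avoids both the external tail bound and the unverified integral asymptotics; the paper's tail-based approach carries more information (exponentially small probability of large downward deviations) that could be useful elsewhere, but for the lemma as stated your argument fully suffices and is arguably preferable.
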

	\begin{proof}
		Note that $\E[X]=N/p$.  Janson's lower tail bound~\cite[Theorem 3.1]{janson2018tail} states that for $\lambda\le 1$ we have
		\[ \Pr(X\le \lambda N/p)\le \exp(-N(\lambda-1-\ln \lambda)).\]
		Therefore,
		\begin{align*}
			\E[(N/p-X)\bone(X\leq N/p)]= \sum_{i\le N/p} \Pr(X\le i)&=(N/p+o(N))\int_0^1  \Pr(X\le \lambda N/p)d\lambda \\&\le (N/p+o(N))\int_0^1 \exp(-N(\lambda-1-\ln \lambda))d\lambda.
		\end{align*}
		So it suffices to show that the integral on the right hand side is $O(1/\sqrt{N})$, which follows from a routine asymptotic estimation.
\end{proof}

We can use this technical result to prove Theorem~\ref{thm:UpperUniform}.
\begin{proof}[Proof of Theorem~\ref{thm:UpperUniform}]
	Recall that we wish to show
	\[\S_D(n\cdot \1)= \max_v\{d^+(v)-d^-(v)\}n+O_D(\sqrt{n}).\]
	The lower bound $\S_D(n\cdot \1)\ge  \max_v\{d^+(v)-d^-(v)\}n$ follows form Norman deterministically choosing $v$ every round.  It remains to exhibit a strategy of Rei such that Norman can gain at most $\max_v\{d^+(v)-d^-(v)\}n+O_D(\sqrt{n})$ points in expectation against this strategy.
	
	Intuitively, Rei's strategy will be to play uniformly at random until some option is depleted, after which she plays arbitrarily.  To more formally analyze this strategy, we will have Rei generate an infinite random string $\pi=\pi_1\pi_2\cdots$ where each $\pi_t$ is chosen uniformly and independently amongst $V(D)$.  In the $t$-th round, Rei will play $\pi_t$, unless that option has been depleted, in which case she plays arbitrarily.
	
	For $w\in V(D)$, let $T_w$ denote the smallest integer $t$ such that $|\{s\le t:\pi_s=w\}|=n$, and let $T=\min_w T_w$.  Note that before the $T$-th round of the game, Rei can still play every option (and hence does so uniformly), and that some option is depleted after completion of the $T$-th round. Thus for $t\le T$, if Norman plays $v$ in the $t$-th round, then the expected increase in score for Norman is exactly $k^{-1}(d^+(v)-d^-(v))$, so his maximum expected increase in score is at most $k^{-1}\max_v\{d^+(v)-d^-(v)\}$. Using this and that Norman's score increases by at most 1 for each round after the $T$-th, we find
	\begin{align}
		\E[\S_D(\r)|T]&\le T\cdot k^{-1}\max_v\{d^+(v)-d^-(v)\}+(kn-T)\nonumber\\
		&\le \max_v\{d^+(v)-d^-(v)\}n+(kn-T),\label{eq:negbin}
	\end{align}
	where this last step used that deterministically $T\le kn$.  It remains to upper bound the expected value of $kn-T$.
	
	Observe that $T_w$ has the distribution of the sum of $n$ independent geometric random variables with parameter $1/k$.  Thus $\E T_w=kn$, and Lemma \ref{techineq} gives
	\[\E[(kn-T_w)\bone(T_w\leq kn)]=O_D(\sqrt{n}).\]
	It follows that
	\[ \E[kn-T]= \E[ \max_w \{kn-T_w\}]\le \sum_{w} \E[(kn-T_w)\bone(T_w\leq kn)]=O_D(\sqrt{n}).\]
	Combining this with \eqref{eq:negbin} and using the tower property of conditional expectation gives the desired result.
\end{proof}

A similar approach gives Theorem~\ref{thm:UpperNonuniform}.
\begin{proof}[Proof of Theorem~\ref{thm:UpperNonuniform}]
	Recall that we wish to prove \[\S_D(\r)\le\max_v\left\{\sum_{u\in N^+(v)}\r_u-\sum_{u\in N^-(v)}\r_u\right\}+O_D(M^{2/3}),\]
	where $M=\max_v \r_v$. The approach we use is very similar to that of Theorem~\ref{thm:UpperUniform}, so we will omit some of the redundant details.  Intuitively, we will prove the lower bound by having Rei play each option $w$ with probability $\r_w/\sum_u \r_u$ until an option is depleted, after which she plays arbitrarily.  However, we will first need to ``trim'' $\r$ to ignore vertices with $\r_v$ small.
	
	To be more precise, let $\r'$ be the vector defined by $\r'_w=\r_w$ if $\r_w\ge M^{2/3}$ and $\r'_w=0$ otherwise.  Rei generates an infinite random string $\pi=\pi_1\pi_2\cdots$ where $\pi_t=w$ with probability $\r'_w/\sum_u \r'_u$ for all $w\in \supp(\r')$ independently of every other $\pi_s$.  In the $t$-th round Rei will play $\pi_t$ unless that option is depleted, in which case she plays arbitrarily.   
	
	For $w\in \supp(\r')$, let $T_w$ denote the smallest integer $t$ such that $|\{s\le t:\pi_s=w\}|=\r'_w$, and let $T=\min_{w\in \supp(\r')} T_w$.  Note that for $t\le T$, if Norman plays $v$ in the $t$-th round then the expected increase in score for Norman is exactly $(\sum_u \r_u')^{-1}(\sum_{u\in N^+(v)}\r'_u-\sum_{u\in N^-(v)}\r'_u)$.  Using this, that Norman's score increases by at most 1 each round, and that $T\le \sum_u \r'_u$, we see that
	\begin{equation}\E[\S_D(\r)|T]\le \max_{v}\left\{\sum_{u\in N^+(v)}\r'_u-\sum_{u\in N^-(v)}\r'_u\right\}+\lrpar{\sum_u \r_u-T}.\label{eq:negbin2}\end{equation}
	
	Observe that each $T_w$ random variable has the distribution of the sum of $\r_w'=\r_w$ independent geometric random variables with parameter $\r_w/\sum_u \r_u'$.  Thus $\E T_w=\sum_u \r_u'$, and Lemma~\ref{techineq} gives
	\[\E[(\sum_u \r_u'-T_w)\bone(T_w\leq \sum_u \r_u')]=O_D(\r_w^{-1/2}\sum_u \r_u')=O_D(M^{2/3}),\]
	where this last step used $\r_w\ge M^{2/3}$ and $\sum_u \r_u'\le k\cdot M$.
	It follows that
	\[ \E[\sum_u \r_u'-T]= \E[ \max_w\{ \sum_u \r_u'-T_w\}]\le \sum_w \E[(\sum_u \r_u'-T_w)\bone(T_w\leq \sum_u \r_u')]=O_D(M^{2/3}).\]
	Combining this with \eqref{eq:negbin2}, the tower property of conditional expectation, and that $\sum_u \r_u-\sum_u \r_u'=O_D(M^{2/3})$ gives the desired result.
\end{proof}

\subsection{Spectral bounds}
As we saw in the case of RPS, there are semi-restricted games where Rei's optimal strategy is to start by playing uniformly at random until some option runs out. In this section, we will classify a class of graphs for which Rei's optimal strategy is to  play approximately uniformly until some option runs out.
We do so by identifying a property which implies that the expected loss at any given step is proportional to its deviation from the uniform strategy. 

Given a digraph $D$, let $A_D$ be the \textit{skew adjacency matrix} of $D$, i.e.
\[ (A_D)_{ij}= \begin{cases}
	1, &\text{if } ij\in E(D) \\
	-1 &\text{if } ji\in E(D)\\
	0 & \text{otherwise}
\end{cases}
\]
Our main focus for this subsection will be digraphs satisfying
\[ \nul(A_D)=\rmspan(\1).\]
To motivate this, observe that at any given state $\r$, if $p$ is the probability vector such that Rei plays vertex $u$ with probability $p_u$, then Rei's expected loss for this round will be 0 if and only if $A_Dp=0$. Therefore, for digraphs with $\nul(A_D)=\rmspan(\1)$, the steps where Rei's expected loss is 0 are the ones where Rei picks her option uniformly at random. We will now show that in fact the expected loss for such digraphs is always proportional to the maximum deviation between $p_v$ and $1/k$.

\begin{lemma}\label{punish}
	For any $k$-vertex digraph $D$ with $\nul(A_D)=\rmspan(\1)$, there exists a constant $\alpha_D>0$ such that for any probability vector $p$ indexed by $V(D)$, we have
	\[\max_v \left\{ \sum_{u\in N^+(v)}p_u-\sum_{u\in N^-(v)}p_u\right\}\geq\alpha_D\max_v |p_v-1/k|.\]
\end{lemma}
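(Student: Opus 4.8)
The plan is to establish the inequality as a comparison between two norms on a finite-dimensional space, where a compactness argument handles the non-degenerate case and the hypothesis $\nul(A_D)=\rmspan(\1)$ rules out the degenerate one. Concretely, define $f(p)=\max_v\{(A_Dp)_v\}$ and $g(p)=\max_v|p_v-1/k|$; note $f$ is exactly the left-hand side since $(A_Dp)_v=\sum_{u\in N^+(v)}p_u-\sum_{u\in N^-(v)}p_u$, and both $f$ and $g$ are continuous (indeed piecewise linear) functions on the probability simplex $\Delta=\{p:p_v\ge 0,\ \sum_v p_v=1\}$, which is compact. We want a constant $\alpha_D>0$ with $f(p)\ge \alpha_D\, g(p)$ for all $p\in\Delta$.

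First I would observe that $f(p)\ge 0$ always: since $\sum_v (A_Dp)_v = \1^\top A_D p = 0$ (as $A_D$ is skew-symmetric, $\1^\top A_D = -(A_D\1)^\top$, but more simply $\mathbf{1}^\top A_D \mathbf{1} = 0$ and we just need that the entries of $A_Dp$ sum to zero, which holds because each arc contributes $+1$ to one coordinate and $-1$ to another), the maximum entry of $A_Dp$ is nonnegative. Next I would show $f(p)=0$ forces $g(p)=0$: if $f(p)=0$ then every coordinate of $A_Dp$ is $\le 0$, but they sum to $0$, so $A_Dp=0$, hence $p\in\nul(A_D)=\rmspan(\1)$; combined with $p\in\Delta$ this gives $p=\1/k$, so $g(p)=0$. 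Thus the ``bad set'' $Z=\{p\in\Delta: f(p)=0\}$ is exactly $\{\1/k\}$, and on it $g$ vanishes too.

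Now I would run the standard compactness/minimization argument. Consider the set $K=\{p\in\Delta: g(p)\ge \epsilon\}$ for a fixed small $\epsilon>0$ — actually it is cleaner to avoid the parameter: let $K=\{p\in\Delta: g(p)\ge \tfrac12\cdot\tfrac1k\}$, say, or better, simply note the homogeneity-after-recentering. The cleanest route: restrict attention to $p\ne\1/k$ and consider $h(p)=f(p)/g(p)$, which is well-defined and continuous on $\Delta\setminus\{\1/k\}$ but this set is not compact. To fix this, write $p=\1/k+q$ where $q$ ranges over $\{q:\sum_v q_v=0,\ q\ne 0,\ \1/k+q\in\Delta\}$; both $f(\1/k+q)=f(q)+f(\1/k)=f(q)$ (using $A_D\1=0$ and linearity of $A_D$, so actually $f(\1/k+q) = \max_v (A_D q)_v$, since $A_D(\1/k)=0$) and $g(\1/k+q)=\max_v|q_v|$ are positively homogeneous of degree $1$ in $q$. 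So $h$ depends only on the direction of $q$, and I can minimize $f(q)/\max_v|q_v|$ over the compact set $\{q:\sum_v q_v=0,\ \max_v|q_v|=1\}$ (which is compact and, crucially, does not contain $q=0$). On this set $f(q)>0$ everywhere: if $f(q)=0$ then as before $A_Dq=0$ so $q\in\rmspan(\1)$, but $\sum_v q_v=0$ forces $q=0$, contradicting $\max_v|q_v|=1$. Hence the continuous function $f(q)$ attains a positive minimum $\alpha_D>0$ on this compact set, and for general $p\in\Delta$ with $p\ne\1/k$, applying this to $q=p-\1/k$ and using homogeneity gives $f(p)\ge\alpha_D\max_v|q_v|=\alpha_D g(p)$; the case $p=\1/k$ is trivial since both sides are $0$.

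The main obstacle is really just bookkeeping: making sure the set over which one minimizes is genuinely compact and genuinely avoids the origin, and correctly exploiting that $A_D$ annihilates $\1$ so that only the ``centered'' part $q=p-\1/k$ matters. Once that reduction is in place, the positivity of the minimum is immediate from the hypothesis $\nul(A_D)=\rmspan(\1)$ together with $\sum_v q_v=0$. I would also double-check the sign conventions in the definition of $A_D$ match the definition of the expected-loss expression $\sum_{u\in N^+(v)}p_u-\sum_{u\in N^-(v)}p_u$ so that it is indeed $(A_Dp)_v$ and not $-(A_Dp)_v$ (from the given definition of $A_D$, row $v$ has $+1$ in columns $u\in N^+(v)$ and $-1$ in columns $u\in N^-(v)$, so it matches exactly). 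No serious analytic difficulty arises beyond this.
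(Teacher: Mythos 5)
Your proof is correct, but it takes a genuinely different route from the paper. You recenter at $\1/k$, exploit $A_D(\1/k)=0$ and positive homogeneity, and then run a compactness argument: minimize $\max_v(A_Dq)_v$ over the compact set $\{q:\sum_v q_v=0,\ \max_v|q_v|=1\}$, where positivity follows from the hypothesis $\nul(A_D)=\rmspan(\1)$. The paper instead makes the constant explicit via the spectrum: it first shows $\max_v(A_Dp)_v\ge k^{-1}\max_v|(A_Dp)_v|$ using $\sum_v(A_Dp)_v=0$, then uses the eigendecomposition of the skew-symmetric matrix $A_D$ to get $\norm{A_Dp}=\norm{A_D(p-k^{-1}\1)}\ge|\lambda_2|\,\norm{p-k^{-1}\1}$, yielding $\alpha_D=|\lambda_2|/k^2$ where $\lambda_2$ is the smallest nonzero eigenvalue in modulus. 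Your argument is more elementary (no eigendecomposition needed) and generalizes to any situation where the kernel is exactly $\rmspan(\1)$, at the cost of producing a non-explicit constant; the paper's version buys a concrete spectral constant, in keeping with its "spectral bounds" framing. One small correction: your "more simply" justification that the entries of $A_Dp$ sum to zero ("each arc contributes $+1$ to one coordinate and $-1$ to another") is not right as stated — an arc $xy$ contributes $+p_y$ to coordinate $x$ and $-p_x$ to coordinate $y$, so $\sum_v(A_Dp)_v=\sum_u p_u\bigl(d^-(u)-d^+(u)\bigr)$, which vanishes for all $p$ only when $D$ is Eulerian. The correct (and short) justification is the one you started and abandoned: the hypothesis gives $A_D\1=0$, so by skew-symmetry $\1^\top A_D=-(A_D\1)^\top=0$, hence $\sum_v(A_Dp)_v=\1^\top A_Dp=0$; you need this same fact again on the normalized set to conclude $f(q)=0\Rightarrow A_Dq=0$. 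With that repair the argument is complete.
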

\begin{proof}
	First notice that the left-hand side of our inequality satisfies
	\begin{equation} \max_v \left\{ \sum_{u\in N^+(v)}p_u-\sum_{u\in N^-(v)}p_u\right\}=\max_v (A_Dp)_v\ge k^{-1}\max_v |(A_Dp)_v|,\label{eq:punish}\end{equation}
	where the last inequality holds because $\sum_v (A_Dp)_v=0$
	and because $x = (x_1,...,x_k)$ with $\sum_i^k x_i = 0$ and $x_1\geq...\geq x_k$ satisfies
	\[kx_1\geq \sum_{i:\ x_i\geq 0} x_i = \sum_{i:\ x_i\leq 0} |x_i| \geq |x_k|.\]
	Since $A_D$ is skew-symmetric and $\dim(\nul(A_D)) = 1$, it admits an eigendecomposition $A_D = \sum_{i=1}^k\lambda_i e_ie_i^*$ with $0=|\lambda_1|<|\lambda_2|\leq...\leq|\lambda_k|$, $e_i^*e_i=1$, and $e_i^* e_j=0$ for all $i\ne j$. Notice that $A_Dp = A_D(p -k^{-1}\cdot\1)$, we have 
	\begin{align*}
		\norm{A_Dp} &= \norm{A_D\lrpar{p-k^{-1}\cdot\1}} = \norm{\sum_{i=2}^k e_i\left[\lambda_i e_i^*\lrpar{p-k^{-1}\cdot\1 }\right]} \\&= \sqrt{\sum_{i=2}^k \lrabs{\lambda_i}^2 \lrabs{e_i^*\lrpar{p-k^{-1}\cdot\1 }}^2}\geq \lrabs{\lambda_2}\cdot\norm{p-k^{-1}\cdot\1} \ge \lrabs{\lambda_2} \max_v \lrabs{p_v-1/k}.
	\end{align*}
	Hence 
	\[\max_v|(A_Dp)_v| \geq \frac{1}{k}\norm{A_Dp} \geq \frac{|\lambda_2|}{k}\max_v \lrabs{p_v-1/k}, \] 
	so by \eqref{eq:punish} the statement follows with $\alpha_D=\lrabs{\lambda_2}/k^2$.
\end{proof}
Using this we can prove the following lower bound on $\S_D(n\cdot \1)$ whenever $\nul(A_D)=\rmspan(\1)$.

\begin{theorem}\label{lowergood}
	If $D$ is a digraph on at least 2 vertices with $\nul(A_D)=\rmspan(\1)$, then \[\S_D(n\cdot \1)=\Omega(\sqrt{n}).\]
\end{theorem}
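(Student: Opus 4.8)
The plan is to reduce, using Fact~\ref{fact:recurrence} and Lemma~\ref{punish}, to a purely probabilistic estimate about Rei's play, and then prove that estimate by a moment computation on the difference of two coordinates of the restriction vector. Write $k=|V(D)|$ and fix an arbitrary strategy $\R$ for Rei. Unrolling Fact~\ref{fact:recurrence} gives $\S_D(n\1;\R)=\E[\sum_{t=0}^{kn-1}e(\r_t)]$, where $\r_0=n\1$ and $\r_{t+1}=\r_t-\del_{a_t}$ with $a_t$ drawn from Rei's distribution $p^{(\r_t)}=(\Pr(\R(\r_t)=u))_u$, and $e(\r)=\max_v\big(\sum_{u\in N^+(v)}p^{(\r)}_u-\sum_{u\in N^-(v)}p^{(\r)}_u\big)$. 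By Lemma~\ref{punish}, $e(\r_t)\ge\alpha_D d_t$ with $d_t:=\max_v|p^{(\r_t)}_v-1/k|$, so $\S_D(n\1;\R)\ge\alpha_D\,\E[W]$ where $W:=\sum_{t=0}^{kn-1}d_t$. Since at round $kn-1$ every coordinate of $\r_{kn-1}$ but one already vanishes, $d_{kn-1}\ge1/k$, hence $\S_D(n\1)\ge\alpha_D/k>0$, which handles all $n$ below any fixed threshold. So it remains to show $\E[W]=\Omega_D(\sqrt n)$ for large $n$.

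First I would fix distinct vertices $1,2\in V(D)$ (possible since $k\ge2$) and set $Y_t:=(\r_t)_1-(\r_t)_2$, adapted to the filtration $(\mathcal{F}_t)$ generated by $a_0,\dots,a_{t-1}$; its increments lie in $\{-1,0,1\}$ and $\E[Y_{t+1}-Y_t\mid\mathcal{F}_t]=p^{(\r_t)}_2-p^{(\r_t)}_1=:g_t$ with $|g_t|\le2d_t$. Let $\tilde Y_t:=Y_t-\sum_{s<t}g_s$ be the associated martingale, so $\tilde Y_0=0$ and $|\tilde Y_{t+1}-\tilde Y_t|\le2$. Put $t^*:=kn-\lceil c\sqrt n\rceil$ for a small constant $c>0$. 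The crucial observation is that at time $t^*$ only $O(\sqrt n)$ rounds remain, so deterministically $|Y_{t^*}|\le(\r_{t^*})_1+(\r_{t^*})_2\le\sum_w(\r_{t^*})_w=kn-t^*\le2c\sqrt n$: the difference process is \emph{pinned} near the end. On the other hand, the conditional variance of each increment is $(p^{(\r_s)}_1+p^{(\r_s)}_2)-g_s^2\ge\tfrac2k-6d_s$, so $\E[\tilde Y_{t^*}^2]=\sum_{s<t^*}\E[\Var(Y_{s+1}-Y_s\mid\mathcal{F}_s)]\ge\tfrac{2t^*}{k}-6\E[W]\ge 2n-2c\sqrt n-2-6\E[W]$. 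Thus, if one assumes toward a contradiction that $\E[W]<\varepsilon\sqrt n$ for a small constant $\varepsilon$, then $\E[\tilde Y_{t^*}^2]\ge n$ for $n$ large.

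To close the loop, note $\tilde Y_{t^*}$ is a mean-zero martingale with increments bounded by $2$ over $t^*\le kn$ steps, so Azuma's inequality gives $\Pr(|\tilde Y_{t^*}|\ge\lambda)\le2\exp(-\lambda^2/8kn)$ and hence $\E[\tilde Y_{t^*}^4]=O_D(n^2)$. Combining this with the previous paragraph via the Hölder interpolation $\E[\tilde Y_{t^*}^2]\le(\E|\tilde Y_{t^*}|)^{2/3}(\E[\tilde Y_{t^*}^4])^{1/3}$ yields
\[\E|\tilde Y_{t^*}|\ \ge\ \frac{(\E[\tilde Y_{t^*}^2])^{3/2}}{(\E[\tilde Y_{t^*}^4])^{1/2}}\ =\ \Omega_D(\sqrt n).\]
But $|\tilde Y_{t^*}|\le|Y_{t^*}|+\sum_{s<t^*}|g_s|\le2c\sqrt n+2W$, so $\E|\tilde Y_{t^*}|\le2c\sqrt n+2\E[W]\le(2c+2\varepsilon)\sqrt n$. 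Choosing $c$ and $\varepsilon$ small enough (depending only on the constant hidden in the $\Omega_D$, i.e.\ on $k$) this contradicts the displayed lower bound. Hence $\E[W]=\Omega_D(\sqrt n)$, and therefore $\S_D(n\1;\R)\ge\alpha_D\,\E[W]=\Omega_D(\sqrt n)$ for every $\R$, which (together with the small-$n$ remark above) proves $\S_D(n\1)=\Omega(\sqrt n)$.

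The step I expect to be the main obstacle is precisely the upgrade from a second-moment estimate to a first-moment estimate: the bound $\E[\tilde Y_{t^*}^2]=\Omega(n)$ combined only with the trivial $|\tilde Y_{t^*}|\le kn$ gives a useless $\E|\tilde Y_{t^*}|=\Omega(1)$, so a naive Cauchy–Schwarz argument fails, and one genuinely needs the Azuma-type fourth-moment control to rule out a heavy tail and recover $\Omega(\sqrt n)$. A secondary point requiring care is that the increment-variance lower bound $\tfrac2k-6d_t$ must be robust to Rei's adaptive, possibly adversarial, deviations from uniform play; this is why its error is charged against $W$ itself and why the whole argument is run as a proof by contradiction on the size of $\E[W]$.
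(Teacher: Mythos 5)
Your proposal is correct: the reduction in your first paragraph (greedy Norman via Fact~\ref{fact:recurrence} plus Lemma~\ref{punish}, so that $\S_D(n\cdot\1;\R)\ge\alpha_D\,\E[W]$ with $W=\sum_t\max_v|p_v^t-1/k|$) is exactly how the paper begins, but your proof that $\E[W]=\Omega_D(\sqrt n)$ is genuinely different from the paper's. The paper splits the game into the phase before and after the first depletion, couples the number of times a fixed vertex is played with a $\Bin(kn,1/k)$ variable so that the expected coupling error is at most $\E[W]$, and then uses binomial anticoncentration (the binomial exceeds $n+\beta\sqrt n$ with probability $1/4$) plus Markov to conclude that, unless $\E[W]=\Omega(\sqrt n)$, the fixed vertex gets overdrawn by $\Omega(\sqrt n)$, which forces $\Omega(\sqrt n)$ post-depletion rounds each worth $\alpha_D/k$ to Norman. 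You instead track the difference $Y_t=(\r_t)_1-(\r_t)_2$, extract its martingale part, lower-bound the per-step conditional variance by $2/k-6d_t$, use the pinning $|Y_{t^*}|=O(\sqrt n)$ forced by the restriction near the end of the game, and upgrade the resulting second-moment bound $\E[\tilde Y_{t^*}^2]\ge n$ to $\E|\tilde Y_{t^*}|=\Omega_D(\sqrt n)$ via Azuma-controlled fourth moments and the $L^1$--$L^2$--$L^4$ interpolation, charging all errors to $W$ itself; I checked the variance bound, the orthogonality-of-increments identity (valid since $\tilde Y_0=0$), the interpolation inequality, and the constant bookkeeping ($c+\varepsilon$ small compared with the $1/(16k)$-type constant from Azuma), and they are consistent. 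Your remark that a naive Cauchy--Schwarz or a direct second-moment comparison fails, so that the fourth-moment control is genuinely needed, is accurate and is the real content of your variant. What each route buys: the paper's coupling argument is softer (only binomial anticoncentration, no martingale machinery) and makes the ``overshoot forces a long second phase'' mechanism explicit, while yours avoids both the coupling and the two-phase decomposition, works with a single unified deviation functional $W$, and adapts more readily to tracking any pair of coordinates; both rest on the same underlying intuition that near-uniform play cannot respect the exact restriction without $\sqrt n$-scale fluctuations.
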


\begin{proof}
	We identify $V(D)$ with $[k]$ for $k>2$ and let $N=kn$ be the total number of rounds in this game. Fix some optimal strategy $\R$ for Rei. We divide the game into two phases: the first phase is when every option is still available to Rei, and the second phase is when at least one option has been depleted. Let $T$ be the (random) number of steps in the first phase.
	
	Let $\X=(X_1, X_2,\dots, X_N)$ be the stochastic process where $X_i$ denotes Rei's choice when $i\le T$, and for $i>T$ the $X_i$ are independently uniformly random over $[k]$. We define \[p^t_v=\Pr(X_t=v| X_1, X_2,\dots, X_{t-1}).\]
	For $t\le T$, $p^t_v$ corresponds to the probability of Rei choosing $v$ at the $t$-th round given her previous choices.
	
	We consider the following greedy strategy for Norman: Suppose at the $t$-th round, the restriction vector for Rei is $\r$ with $p_v=\Pr(\c{R}(\r)=v)$ for all $v$, then Norman will deterministically pick $w$ that maximizes his expected gain, i.e. 
	\[ w=\arg \max_v\left\{ \sum_{u\in N^+(v)}p_u-\sum_{u\in N^-(v)}p_u\right\}.\] Here we break ties arbitrarily. If $t\leq T$, we have $p_v=p^t_v$. Then, by Lemma \ref{punish}, Norman's expected gain at this round is
	\[ \max_v\left\{ \sum_{u\in N^+(v)}p_u-\sum_{u\in N^-(v)}p_u\right\}=\max_v \left\{ \sum_{u\in N^+(v)}p^t_u-\sum_{u\in N^-(v)}p^t_u\right\}\geq \alpha_D|p^t_1-1/k|.\] If $t>T$, there exists some $v$ with $p_v=0$. Then, by Lemma \ref{punish}, Norman's expected gain at this round is
	\[ \max_v\left\{ \sum_{u\in N^+(v)}p_u-\sum_{u\in N^-(v)}p_u\right\}\geq\alpha_D/k \]
	
	We let $S_1$ denote Norman's expected score during the first phase under this strategy and $S_2$ his score during the second phase, so that his total expected score is $S_1+S_2$. By above analysis, we have
	\begin{align}
		S_1&\geq \E\left[\sum_{t=1}^T \alpha_D|p^t_1-1/k|\right]= \alpha_D \E\left[\sum_{t=1}^N |p^t_1-1/k|\right], \label{greedyineq}\\
		S_2&\geq \E\left[\sum_{t=T+1}^N \alpha_D/k\right]=\frac{\alpha_D}{k}\cdot\E[N-T].\label{greedyineq2}
	\end{align}
	
	From \eqref{greedyineq} and \eqref{greedyineq2}, we see that $S_1,S_2\ge 0$, and thus to prove the result it suffices to show $\max\{\E \sum_t |p_1^t-1/k|,\E[N-T]\}=\Omega(\sqrt{n})$. To this end, fix some small constant $c>0$ to be chosen later. We may assume from now on that
	\begin{equation}\E \left[\sum_{t=1}^N |p_1^t-1/k|\right]\le c \sqrt{n},\label{assumption}\end{equation}
	as otherwise we are done by \eqref{greedyineq}.  
	
	We claim that, if $c$ is sufficiently small, then with probability at least 1/8 the vertex 1 appears at least $n+\Omega(\sqrt{n})$ times in $\X$.  This will imply our theorem.  Indeed, if 1 appears $n+Y$ times in $\X$, then necessarily $N-T\ge Y$ since $(X_1,\ldots,X_{T-1})$ by definition has each symbol appearing less than $n$ times.  Thus this claim together with \eqref{greedyineq2} implies $S_2=\Omega(\sqrt{n})$, giving the result.  It remains to prove the claim.
	
	Let $f(\X)$ be the number of times $1$ occur in $\X$. We will show that we can couple $f(\X)$ with a binomial random variable $\Bin(N,1/k)$ such that 
	\[ \E [|f(\X)-\Bin(N,1/k)|]\le \E \left[\sum_{t=1}^N|p^t_1-1/k|\right].\]
	To do so, we consider independent uniform random variables $U_1,U_2,\dots, U_N\sim U[0,1]$. Let $Y_t= \bone(U_t\le 1/k)$. Notice that 
	\[ \Bin(N,1/k)\sim \sum_{1\le t\le N} Y_t. \]
	To couple $X_t$ with $Y_t$, we let 
	\[ X_{t}=j \hspace{.6em}\text{ if }\hspace{.6em} U_{t}\in \left(\sum_{i<j} p_i^t, \sum_{i\le j} p_i^t\right].\]
	With this coupling,
	\[ \Pr(\bone(X_t=1)\neq Y_t) =\Pr(U_t\le p_1^t \text{ and } U_t> 1/k)+\Pr(U_t>p_1^t \text{ and } U_t\le 1/k) = \E[ |p^t_1-1/k|].\]
	Therefore 
	\[ \E[|f(\X)-\Bin(N,1/k)|]= \E\left[ \bigg|\sum_{t=1}^N \bone(X_t=1)- Y_t\bigg|\right]\le \sum_{t=1}^N \E [|p^t_1-1/k|]\le c \sqrt{n},\]
	where this last step used \eqref{assumption}.
	We will now exploit the deviation of $\Bin(N,1/k)$ from its mean $N/k=n$ to show that $f(\X)$ also deviates from $n$. Since $k\ge 2$, we can pick a small enough $\beta>0$ such that
	\[ \Pr(\Bin(N,1/k)\ge n+\beta \sqrt{n})\ge \frac14.\]
	Taking $c=\beta/16$ together with Markov's inequality gives
	\[ \Pr(\Bin(N,1/k)-f(\X)\ge \beta\sqrt{n}/2)\le \frac{\E[ |f(\X)-\Bin(N,1/k)|]}{\beta\sqrt{n}/2} \le 2c/\beta= \frac18. \]
	It follows that 
	\begin{align*}
		\Pr(f(\X)\ge n+\beta\sqrt{n}/2)&\ge \Pr(\Bin(N,1/k)\ge n+\beta \sqrt{n} \text{ and } \Bin(N,1/k)-f(\X)<\beta\sqrt{n}/2)\\
		&\ge \Pr(\Bin(N,1/k)\ge n+\beta\sqrt{n}) -\Pr(\Bin(N,1/k)-f(\X)\ge \beta\sqrt{n}/2)\\
		&\ge 1/8. 
	\end{align*}
	This concludes the proof of the claim, hence also the proof of this theorem.
\end{proof}
To apply Theorem~\ref{lowergood}, we need to find a class of digraphs satisfying the necessary spectral conditions. Eulerian tournaments turn out to be such a class, and to establish this we need the following lemma.

\begin{lemma}\label{eventournament}
	Every tournament $T$ on an even number of vertices has $\det(A_T)\neq 0$.
\end{lemma}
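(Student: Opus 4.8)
The plan is to work modulo $2$. The key point is that reduction mod $2$ is a ring homomorphism $\mathbb{Z}\to\mathbb{F}_2$ and hence commutes with taking determinants, so it suffices to show that the reduction of $A_T$ modulo $2$ has nonzero determinant over $\mathbb{F}_2$; this forces $\det(A_T)$ to be an odd integer, and in particular nonzero.

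First I would note that since $T$ is a tournament, every off-diagonal entry of $A_T$ equals $\pm 1$ and every diagonal entry is $0$. Reducing mod $2$, both $+1$ and $-1$ become $1$, so the reduced matrix is exactly $J-I$ over $\mathbb{F}_2$, where $J$ is the all-ones matrix and $I$ the identity, i.e.\ the adjacency matrix of the complete graph on $n:=|V(T)|$ vertices. Crucially this reduction does not depend on the orientation of $T$.

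Next I would compute $\det(J-I)$ over $\mathbb{Z}$, which then determines its value mod $2$. One can diagonalize $J$ (eigenvalue $n$ with multiplicity $1$, eigenvalue $0$ with multiplicity $n-1$), so $J-I$ has eigenvalues $n-1$ and $-1$ (the latter with multiplicity $n-1$), giving $\det(J-I)=(-1)^{n-1}(n-1)$; alternatively apply the matrix determinant lemma to $-I+\1\1^{\top}$ to reach the same conclusion. When $n$ is even this equals $-(n-1)$, which is odd, so $\det(J-I)\equiv 1\pmod 2$. Combining everything, $\det(A_T)\equiv\det(J-I)\equiv 1\pmod 2$ for even $n$, hence $\det(A_T)\neq 0$.

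There is no genuine obstacle here; the work is in recognizing the right move. The naive approaches — expanding the determinant directly, or using $\det(A_T)=\mathrm{Pf}(A_T)^2$ and expanding the Pfaffian as a signed sum over perfect matchings — run into potential sign cancellations among matchings, and the whole purpose of passing to $\mathbb{F}_2$ is to render those signs irrelevant. (The evenness of $n$ is of course essential: any skew-symmetric matrix of odd order has determinant $0$.)
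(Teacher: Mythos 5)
Your proof is correct and takes essentially the same approach as the paper: reduce mod $2$ to see $A_T\equiv J-I$, compute $\det(J-I)=(-1)^{n-1}(n-1)$ via the spectrum of $J$, and observe this is odd when $n$ is even. The paper's proof is just a more compressed version of the same argument.
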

\begin{proof}
	Suppose $T$ is a tournament on $n$ vertices. We will prove that $\det(A_T)$ is an odd number, and hence is not 0. Let $I$ be the $n\times n$ identity matrix and $J$ be the $n\times n$ all $1$ matrix. Because $J$ has eigenvalue $n$ with multiplicity 1 and 0 with multiplicity $n-1$, we have
	\[ \det(A_T)\equiv \det(J-I)\equiv (-1)^{n-1}(n-1)\equiv 1\mod 2,\]
	where this last step holds when $n$ is even.
\end{proof}

\begin{lemma}\label{oddtournament}
	All Eulerian tournaments $D$ satisfy $\nul(A_D)=\rmspan(\1)$.
\end{lemma}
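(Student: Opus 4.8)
The plan is to establish the two inclusions $\rmspan(\1)\subseteq\nul(A_D)$ and $\nul(A_D)\subseteq\rmspan(\1)$ separately, the second by showing $\rk(A_D)=k-1$ via a vertex‑deletion argument that reduces to Lemma~\ref{eventournament}. First I would record two elementary facts about an Eulerian tournament $D$ on $k=|V(D)|$ vertices. Since $D$ is a tournament, $d^+(v)+d^-(v)=k-1$ for every vertex $v$, so the Eulerian condition $d^+(v)=d^-(v)$ forces $k$ to be odd. Moreover, the $v$‑th row of $A_D$ has exactly $d^+(v)$ entries equal to $+1$ and $d^-(v)$ entries equal to $-1$, so its row sum is $d^+(v)-d^-(v)=0$; hence $A_D\1=0$, which gives the inclusion $\rmspan(\1)\subseteq\nul(A_D)$.

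For the reverse inclusion it suffices to prove $\rk(A_D)=k-1$: then $\dim\nul(A_D)=1$, and since $\1\neq 0$ lies in $\nul(A_D)$ we conclude $\nul(A_D)=\rmspan(\1)$. The upper bound $\rk(A_D)\le k-1$ is automatic, since a skew‑symmetric matrix of odd order is singular (one has $\det(A_D)=\det(A_D^T)=\det(-A_D)=(-1)^k\det(A_D)=-\det(A_D)$). For the matching lower bound, pick any vertex $v$ and let $D'$ be the sub‑tournament of $D$ induced on $V(D)\sm\{v\}$. Because $D'$ is an induced subdigraph, its skew adjacency matrix $A_{D'}$ is exactly the principal submatrix of $A_D$ obtained by deleting the row and column indexed by $v$. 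Now $D'$ is a tournament on the even number $k-1$ of vertices, so Lemma~\ref{eventournament} gives $\det(A_{D'})\neq 0$, i.e.\ $\rk(A_{D'})=k-1$. Since the rank of any submatrix is at most the rank of the full matrix, $k-1=\rk(A_{D'})\le\rk(A_D)\le k-1$, forcing $\rk(A_D)=k-1$ and completing the proof.

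There is no real obstacle once Lemma~\ref{eventournament} is available; the argument is short. The one point I would stress is that it is essential to have the nonvanishing statement of Lemma~\ref{eventournament} for \emph{all} tournaments on an even number of vertices, since the deleted sub‑tournament $D'$ is typically not Eulerian. In fact the dimension count $\dim\nul(A_D)=1$ holds for every tournament on an odd number of vertices; the Eulerian hypothesis is used only to guarantee that this one‑dimensional null space is spanned by $\1$ rather than by some other vector.
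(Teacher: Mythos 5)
Your proof is correct and follows essentially the same route as the paper: observe that $\1\in\nul(A_D)$ since $D$ is Eulerian, delete a vertex to get a tournament on an even number of vertices, and apply Lemma~\ref{eventournament} to conclude $\rk(A_D)\ge k-1$, hence $\dim\nul(A_D)=1$. The extra details you supply (oddness of $k$, singularity of odd-order skew-symmetric matrices, and the observation that only the identification of the null vector uses the Eulerian hypothesis) are accurate but not needed beyond what the paper's argument already contains.
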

\begin{proof}
	First notice that Eulerian tournaments have an odd number of vertices. Clearly $\1\in \nul(A_D)$.  Let $T$ be the tournament obtained by deleting an arbitrary vertex of $D$.  By Lemma \ref{eventournament}, \[\rk(A_D)\ge \rk(A_T)=|V(D)|-1,\] which means $\dim(\nul(A_D))=1$.
\end{proof}

\begin{proof}[Proof of Theorem \ref{eulerian}]
	The upper bound follows from Theorem~\ref{thm:UpperUniform}
	while the lower bound follows from Theorem \ref{lowergood} and Lemma \ref{oddtournament}.
\end{proof}

\section{Oblivious Strategies}\label{sec:oblivious}
Inspired by the optimal strategy for semi-restricted RPS, We say that a strategy for Rei $\c{R}$ is \textit{oblivious} if the random variable $\c{R}(\r)$ depends only on $\supp(\r)$ for all $\r$, i.e.\ if Rei plays based only on the set of options she can play without taking into account how many times she can perform each option.  We say that a digraph $D$ is \textit{oblivious} if Rei has an optimal strategy in the semi-restricted $D$-game which is oblivious.  We emphasize that $D$ being oblivious only guarantees that there exists at least one oblivious optimal strategy, not that every optimal strategy is oblivious.

Naively, oblivious strategies seem like they would be ineffective.  However, in Section \ref{sec:optimal} we saw that every digraph on at most 3 vertices is oblivious, and the unique optimal strategy in the game considered in~\cite{spiro2022online} was also oblivious.  Thus at this point one might guess that every digraph is oblivious.

Unfortunately this turns out not to be the case.  In fact, we will show that almost every tournament and almost every Eulerian tournament fails to be oblivious.  We do this through the following proposition.
\begin{proposition}\label{prop:oblivious}
	Let $D$ be a tournament.  If there exists a set $S\sub V(D)$ with $|S|$ even and with $N^+(v)\cap S\not\subseteq N^+(w)\cap S$ for all $v\in S,\ w\in V(D)$, then $D$ is not oblivious.
\end{proposition}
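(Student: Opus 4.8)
The plan is to argue by contradiction: assume $D$ is oblivious, fix an oblivious optimal strategy $\R$ for Rei, and let $q=(q_u)_{u\in\S}$ be the probability vector with $q_u=\Pr(\R(\r)=u)$ for every $\r$ with $\supp(\r)=\S$ (well defined by obliviousness). Write $a^w_u=\bone(w\to u)-\bone(u\to w)$ for $u\in\S$, $w\in V(D)$, so that $g(w):=\sum_{u\in\S}a^w_uq_u$ is Norman's expected one-round gain against $q$ if he plays $w$; put $\Lambda=\max_wg(w)$ and $W=\{w:g(w)=\Lambda\}$. I will perturb $q$ at a single restriction vector. For $\r$ with $\supp(\r)=\S$ and a probability vector $q'$ supported on $\S$, let $\R'$ equal $\R$ off $\r$ and play $q'$ at $\r$. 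Since $\R$ is optimal, applying Fact~\ref{fact:recurrence} to $\R$ and to $\R'$ (and noting that $\R'$ agrees with $\R$ on all descendants of $\r-\del_u$, so $\S_D(\r-\del_u;\R')=\S_D(\r-\del_u)$) gives
\[
\S_D(\r;\R')-\S_D(\r)=\Big(\max_w\textstyle\sum_{u\in\S}a^w_uq'_u-\Lambda\Big)+\sum_{u\in\S}(q'_u-q_u)\,\S_D(\r-\del_u).
\]
Taking $q'=q+\ep\mu$ for a fixed $\mu$ with $\sum_u\mu_u=0$ whose negative entries lie in $\supp(q)$, for all small $\ep>0$ the vector $q'$ is a legal play at $\r$ and the first bracket equals $\ep\max_{w\in W}\sum_ua^w_u\mu_u$, so
\[
\S_D(\r;\R')-\S_D(\r)=\ep\Big(\max_{w\in W}\textstyle\sum_{u\in\S}a^w_u\mu_u+\sum_{u\in\S}\mu_u\,\S_D(\r-\del_u)\Big).
\]
Hence it suffices to exhibit one $\r$ with $\supp(\r)=\S$ and one such $\mu$ making the parenthesis negative, contradicting optimality of $\R$.

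To build $\mu$ and $\r$ I would use the hypotheses as follows. Parity enters only through Lemma~\ref{eventournament}: since $|\S|$ is even, the skew-adjacency matrix $A_{D[\S]}$ of the induced sub-tournament is nonsingular. As $g(\cdot)|_{\S}=A_{D[\S]}q$ and $A_{D[\S]}$ is skew-symmetric we get $\sum_{u\in\S}q_ug(u)=q^{\top}A_{D[\S]}q=0$ (so $\Lambda\ge\max_{u\in\S}g(u)\ge 0$), and $A_{D[\S]}q=\gamma\1$ is impossible for a probability vector because $\1^{\top}A_{D[\S]}^{-1}\1=0$ ($A_{D[\S]}^{-1}$ being skew-symmetric). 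Thus $W\not\supseteq\S$: there is $v\in\S\sm W$, a vertex of $\S$ that Norman strictly prefers not to play against $q$. The domination-free hypothesis applied to this $v$ gives $N^+(v)\cap\S\neq\emptyset$ (since $\emptyset\subseteq N^+(w)\cap\S$ for every $w$) and, for each $w\in W$, a vertex $z_w\in N^+(v)\cap\S$ with $z_w\notin N^+(w)$, i.e.\ with $a^w_{z_w}\le 0$. I would then take $\mu$ to move a small amount of mass off $\supp(q)$ and onto a suitable distribution on $\{v\}\cup(N^+(v)\cap\S)$, chosen by a Farkas/LP-duality argument fed by the relations $a^w_{z_w}\le 0$ $(w\in W)$ so that $\max_{w\in W}\sum_ua^w_u\mu_u\le-\alpha$ for some constant $\alpha>0$ (a first attempt is $\mu=\del_v-q$, for which $\sum_ua^w_u\mu_u=a^w_v-\Lambda$). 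Finally, with $\mu$ fixed I would choose $\r$ with $\supp(\r)=\S$ (e.g.\ with one coordinate very large, or with $\r_v=1$) so that Lemma~\ref{lem:switch} forces $\sum_{u\in\S}\mu_u\S_D(\r-\del_u)$ strictly below $\alpha$; here the strict-inequality clause of Lemma~\ref{lem:switch}, available because each $z\in N^+(v)\cap\S$ has $N^-(z)\ni v\neq\emptyset$ (or because the relevant value of $\alpha(\cdot,\cdot)$ is $2$), is what makes the parenthesis strictly negative.

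The main obstacle is precisely the combinatorial heart of the previous paragraph: turning ``$N^+(v)\cap\S$ is contained in no $N^+(w)\cap\S$'' into a single transfer direction $\mu$ that is simultaneously unprofitable for \emph{every} vertex in Norman's best-response set $W$ — whose structure we do not control — and then pairing it with a restriction vector $\r$ for which the switch-lemma bound on the ``future'' term $\sum_u\mu_u\S_D(\r-\del_u)$ is small enough, and strictly so, to absorb it. By contrast, the parity assumption plays only the supporting role of guaranteeing, via Lemma~\ref{eventournament}, that the vertex $v\in\S\sm W$ exists at all, i.e.\ that Rei cannot equalize Norman's one-round payoff across all of $\S$; everything else is driven by the domination-free condition.
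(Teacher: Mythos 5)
Your setup is fine as far as it goes: the perturbation identity obtained from Fact~\ref{fact:recurrence} is correct (subgames from $\r-\del_u$ never revisit $\r$, so $\S_D(\r-\del_u;\R')=\S_D(\r-\del_u)$), and your parity step via Lemma~\ref{eventournament}, showing that Norman's best-response set $W$ against the oblivious vector $q$ cannot contain all of $S$, is sound. But the proof has a genuine gap at exactly the point you flag, and the tools you propose cannot close it. You never produce a direction $\mu$ with $\max_{w\in W}\sum_{u}a^w_u\mu_u\le-\alpha<0$ (your first attempt $\mu=\del_v-q$ gives $a^w_v-\Lambda$, which is nonnegative whenever some $w\in W$ has $w\to v$ and $\Lambda\le 1$; and $W$ may contain vertices outside $S$, about which the domination hypothesis says nothing directly). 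More fundamentally, even granting such a $\mu$, the second half of the plan cannot work as stated: the one-round deficit $\alpha$ is at most a bounded constant, while Lemma~\ref{lem:switch} bounds the differences $\S_D(\r-\del_u)-\S_D(\r-\del_{u'})$ only by the constants $\alpha(u,u')\in\{0,1,2\}$, uniformly in $\r$ --- taking one coordinate of $\r$ huge or setting $\r_v=1$ does not shrink these bounds, and the strict-inequality clause gives strictness but no quantitative gain. So the ``future'' term and the one-round term are of the same order, and nothing in a one-step perturbation at a single $\r$ separates them; this is precisely why the paper does not argue this way for general $D$ (the local-perturbation style works for the $3$-vertex cases in Theorems~\ref{thm:RPS2} and~\ref{thm:3path} only because there the relevant $\S_D$-differences are pinned down).

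The paper resolves this tension by amortizing over many rounds instead of one. Lemma~\ref{oblistruct} takes $\r=qn$ with $n$ large, has Norman play an arbitrary $w$ until some option of Rei is depleted (at time $(\gamma+o(1))n$ by concentration of the depletion times) and then play $v$, and compares the resulting score with the upper bound of Theorem~\ref{thm:UpperNonuniform}; since the per-round advantage is multiplied by $\Theta(n)$ while all errors are $o(n)$, it follows that any $v\in S$ which is a best response to \emph{some} nonnegative $q$ with $\supp(q)=S$ must already be a best response to the oblivious vector $p$. The domination hypothesis supplies such a $q$ for \emph{every} $v\in S$ (weight $2|V(D)|$ on $N^+(v)\cap S$ and $1$ on the rest of $S$), so all of $S$ attains Norman's maximum against $p$; summing over $v\in S$ forces that maximum to be $0$, hence $A_D[S]\,p|_S=0$, contradicting Lemma~\ref{eventournament}. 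Note the logical order is the reverse of yours: the hypothesis is used to show $W\supseteq S$, and parity enters only at the very end, whereas you use parity to get $W\not\supseteq S$ and defer the hypothesis to an LP step that is never carried out. If you want to rescue your route, the missing ingredient is this asymptotic amortization, which makes the one-round comparison dominate the bounded future-value differences.
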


To prove this, we need the following technical result.
\begin{lemma}\label{oblistruct}
	Let $D$ be an oblivious digraph and $\c{R}$ an oblivious optimal strategy for Rei in the semi-restricted $D$-game.  Let $S\sub V(D)$,  and let $p$ be the probability vector satisfying $p_u=\Pr(\c{R}(\r)=u)$ whenever $\supp(\r)=S$.   If $v\in S$ is such that there exists a non-negative vector $q$ with $\supp(q)=S$ and $\sum_{u\in N^+(v)} q_u-\sum_{u\in N^-(v)} q_u=\max_w \{\sum_{u\in N^+(w)} q_u-\sum_{u\in N^-(w)} q_u\}$, then we have
	\[\sum_{u\in N^+(v)} p_u-\sum_{u\in N^-(v)} p_u=\max_w \left\{\sum_{u\in N^+(w)} p_u-\sum_{u\in N^-(w)} p_u\right\}.\]
\end{lemma}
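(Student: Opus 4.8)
The plan is to exploit the quantitative estimate of Theorem~\ref{thm:UpperNonuniform}, namely $\S_D(\r')=\max_w \phi_{\r'}(w)+O_D((\max_u\r'_u)^{2/3})$ where $\phi_x(w):=\sum_{u\in N^+(w)}x_u-\sum_{u\in N^-(w)}x_u$, along a family of restriction vectors of support $S$ whose coordinates are large multiples of $q$. Write $p$ for the probability vector in the statement and $g:=\max_w \phi_p(w)$, so the goal is $\phi_p(v)=g$. A routine argument inside the rational polyhedral cone $\{x\ge 0:\ x_u=0\ (u\notin S),\ \phi_x(v)=\max_w\phi_x(w)\}$, which by hypothesis contains a point of support $S$, lets me replace $q$ by an integer vector $q'$ with $\supp(q')=S$ and $v\in W:=\arg\max_w\phi_{q'}(w)$. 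Set $\eta:=\min_{w\notin W}(\phi_{q'}(v)-\phi_{q'}(w))>0$, with $\eta:=+\infty$ if $W=V(D)$ (in which case $\phi_{q'}\equiv 0$ since $A_D$ is skew-symmetric).

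Next I fix a large integer $M$, put $\r:=Mq'$, and choose $t=\Theta_D(M)$ with $t\le M-1$ and $2t<M\eta$. The two key ingredients are: (i) an exact unrolling of Fact~\ref{fact:recurrence} over the first $t$ rounds: since $\R$ is oblivious and removing at most $t\le M-1$ units from $\r$ keeps every $S$-coordinate positive, at each vector encountered Rei plays $p$ and Norman's optimal per-round gain is the constant $g$, so $\S_D(\r;\R)=tg+\E_{\mathbf c}\big[\S_D(\r-\mathbf c;\R)\big]$, where $\mathbf c$ records the (i.i.d.\ $p$-distributed) choices Rei makes in these $t$ rounds. (ii) Both endpoints are evaluated with Theorem~\ref{thm:UpperNonuniform}: as $\|\mathbf c\|_1=t$ we have $|\phi_{\mathbf c}(w)|\le t$, so $2t<M\eta$ forces $\arg\max_w\phi_{\r-\mathbf c}(w)\sub W$ and hence $\S_D(\r-\mathbf c)=M\phi_{q'}(v)-\min_{w\in W}\phi_{\mathbf c}(w)+O_D(M^{2/3})$ uniformly in $\mathbf c$ (using $\max_u(\r-\mathbf c)_u=O_D(M)$), while $\S_D(\r)=M\phi_{q'}(v)+O_D(M^{2/3})$. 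Since $\R$ is optimal, $\S_D(\,\cdot\,;\R)=\S_D(\,\cdot\,)$, and combining (i) and (ii) the $M\phi_{q'}(v)$ terms cancel, leaving
\[ tg=\E\Big[\min_{w\in W}\phi_{\mathbf c}(w)\Big]+O_D(M^{2/3}). \]

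To finish, I use $\E[\mathbf c]=tp$ and Jensen's inequality for the concave map $x\mapsto\min_{w\in W}\phi_x(w)$: $\E[\min_{w\in W}\phi_{\mathbf c}(w)]\le\min_{w\in W}\E[\phi_{\mathbf c}(w)]=t\min_{w\in W}\phi_p(w)$. Dividing by $t=\Theta_D(M)$ and sending $M\to\infty$ yields $g\le\min_{w\in W}\phi_p(w)\le\phi_p(v)$, while $\phi_p(v)\le g$ holds trivially; hence $\phi_p(v)=g$, which is the claim.

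I expect the main obstacle to lie in ingredient (ii): one must verify carefully that the deterministic bound $|\phi_{\mathbf c}(w)|\le t$ together with $2t<M\eta$ really pins the maximizer of $\phi_{\r-\mathbf c}$ into $W$ for every realization of $\mathbf c$ — so that the error $O_D(M^{2/3})=o(t)$ coming from Theorem~\ref{thm:UpperNonuniform} does not swamp the quantity of interest — and that this error stays uniform in $\mathbf c$. By contrast, the rationalization of $q$ is a minor technicality, and the unrolling in (i) is straightforward once one knows $\R$ is oblivious.
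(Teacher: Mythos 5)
Your proof is correct, and it takes a genuinely different route from the paper's. Both arguments hinge on the quantitative bound of Theorem~\ref{thm:UpperNonuniform} applied along restriction vectors proportional to $q$, but the mechanism differs. The paper constructs an explicit (possibly suboptimal) strategy for Norman --- play a fixed $w$ until the first of Rei's options is depleted, then switch to $v$ --- estimates the random stopping time $T$ and the residual vector by Chebyshev and a second-moment argument, lower-bounds $\S_D(\r)$ by the expected score of this strategy, and compares against the Theorem~\ref{thm:UpperNonuniform} upper bound (the quantity $\gamma=\min_u q_u/p_u$ plays a central role there). You instead stop after a \emph{deterministic} horizon $t=\Theta_D(M)$ short enough that $\supp(\cdot)$ is guaranteed to remain $S$, unroll Fact~\ref{fact:recurrence} exactly over those rounds (using obliviousness to see Rei plays the same $p$ throughout), apply Theorem~\ref{thm:UpperNonuniform} as a two-sided estimate at both $\r$ and the random residual $\r-\mathbf c$, and close with Jensen's inequality for the concave map $x\mapsto\min_{w\in W}\phi_x(w)$. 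This trades the paper's concentration analysis of a random stopping time for a purely deterministic ``unroll and compare'' step plus one application of Jensen, which is arguably cleaner and makes the uniformity of the $O_D(M^{2/3})$ error transparent; the paper's version buys a more concrete game-theoretic interpretation of what Norman should do. Two small notes: your rationalization of $q$ is a legitimate technicality (the paper handles the same issue by a rounding footnote), and your parenthetical that $\phi_{q'}\equiv 0$ when $W=V(D)$ is in fact correct (from $q'^{\mathsf T}A_Dq'=0$ by skew-symmetry and $A_Dq'=c\1$ one gets $c\,\1^{\mathsf T}q'=0$, hence $c=0$), though it is not actually needed for the argument.
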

Roughly speaking, this proposition says that if $v$ can achieve $\max_w \{\sum_{u\in N^+(w)} p_u-\sum_{u\in N^-(w)} p_u\}$, then it will achieve this maximum.  This result agrees with Theorems~\ref{thm:RPS} and \ref{thm:3path} where the optimal oblivious strategies have as many vertices $v$ achieving this maximum as possible.

\begin{proof}
	Assume $v,q$ are as in the hypothesis of the proposition, and let $n$ be a large integer.  Define the vector $\r$ by having\footnote{Strictly speaking we should define $\r_u=\lceil q_un\rceil$ to make $\r$ a restriction vector, but this distinction will not affect our analysis.} $\r_u=q_u n$ for all $u$, noting that $\supp(\r)=\supp(q)=S$.  Let $\gamma=\min_u q_u/p_u$, where we let $q_u/p_u:=\infty$ if $p_u=0$.  Note that $0<\gamma<\infty$ since $\supp(q)=S$ and $p\ne 0$.
	\begin{claim}
		With probability tending towards 1 as $n$ tends towards infinity, an option will be depleted after $(\gamma+o(1)) n$ rounds, at which point every vertex $u$ has $(q_u-p_u \gamma)n+o(n)$ actions remaining in expectation. 
	\end{claim}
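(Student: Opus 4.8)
The plan is to use obliviousness to reduce the first phase of the game — the rounds before any of Rei's options is depleted — to a run of i.i.d.\ plays. As long as no option of Rei has run out, the current restriction vector $\r'$ still has $\supp(\r')=S$, so by obliviousness $\c{R}(\r')$ has law $p$; moreover $\c{R}$ is a function of the restriction vector alone, so Rei's play at each round depends only on which options remain and not on Norman's moves or on the history. Consequently Rei's successive choices, up to and including the round in which the first option is depleted, are i.i.d.\ copies of $p$. I would therefore couple the game with an infinite i.i.d.\ sequence $Z_1,Z_2,\dots$ with $\Pr(Z_s=u)=p_u$, arranged so that $Z_s$ is Rei's $s$-th choice for every $s$ up to that first depletion.

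For each $u$ with $p_u>0$ let $T_u$ be the first $t$ with $|\{s\le t:Z_s=u\}|=\r_u$, where we take $\r_u=q_un$ (replacing $q_un$ by $\lceil q_un\rceil$ perturbs everything by $O(1)$), and set $T=\min_u T_u$. By the coupling, $T$ is exactly the round at which the first option is depleted; options $u$ with $p_u=0$ are never played and so play no role, consistent with the convention $q_u/p_u=\infty$ in the definition of $\gamma$. Now $T_u$ is a sum of $q_un$ i.i.d.\ geometric variables of parameter $p_u$, so $\E T_u=q_un/p_u$ and $\Var T_u=q_un(1-p_u)/p_u^2=O(n)$. Chebyshev's inequality and a union bound over the finitely many $u\in S$ give that, with probability $1-o(1)$, $T_u=q_un/p_u+o(n)$ simultaneously for all relevant $u$; since $\min_u (q_un/p_u)=\gamma n$, this yields $T=\gamma n+o(n)=(\gamma+o(1))n$, which is the first assertion of the claim.

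For the remaining counts, write $N_u(t)=|\{s\le t:Z_s=u\}|$, so that $N_u(t)-p_ut$ is a martingale with increments bounded by $1$, and $T$ is a stopping time with $\E T\le \E T_{u^*}=\gamma n<\infty$, where $u^*$ minimizes $q_u/p_u$ among $u$ with $p_u>0$. The optional stopping theorem (applicable since $\E T<\infty$ and the increments are bounded) gives $\E N_u(T)=p_u\,\E T$. To identify $\E T$ I would sandwich it between $\E T\le\gamma n$ and $\E T\ge(\gamma-\epsilon)n\cdot\Pr\!\big(T\ge(\gamma-\epsilon)n\big)=(\gamma-\epsilon-o(1))n$, valid for every fixed $\epsilon>0$ by the concentration of $T$ from the previous paragraph, so $\E T=\gamma n+o(n)$. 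Hence the expected number of copies of $u$ left when the first option is depleted is $\r_u-\E N_u(T)=q_un-p_u\gamma n+o(n)=(q_u-p_u\gamma)n+o(n)$, as claimed. A with-high-probability version would follow the same way, using that $N_u(t)$ concentrates around $p_ut$ uniformly over the $O(n)$-length window in which $T$ lies.

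I expect no real obstacle here: the estimates are routine. The points that need genuine care are the two reductions in the first paragraph — that obliviousness really makes Rei's play sequence i.i.d.\ and independent of the history before the first depletion, and that rounding $q_un$ to an integer is harmless — together with the sandwich for $\E T$, which lets one avoid verifying uniform integrability of $\min_u T_u$ directly.
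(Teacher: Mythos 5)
Your argument is correct, and its first half is exactly the paper's: reduce Rei's play before the first depletion to an i.i.d.\ sequence with law $p$ (justified by obliviousness and the fact that the support stays $S$ until depletion), write each $T_u$ as a sum of $q_un$ i.i.d.\ geometrics, and get $T=(\gamma+o(1))n$ with high probability via Chebyshev and a union bound. You diverge only in the second half: the paper handles the leftover counts by a second-moment/union-bound concentration of the number of occurrences of each $u$ in the first $(\gamma+o(1))n$ letters of the string, whereas you compute $\E N_u(T)=p_u\,\E T$ by optional stopping (equivalently Wald's identity, legitimate since the increments are bounded and $\E T\le \gamma n<\infty$) and pin down $\E T=\gamma n+o(n)$ by sandwiching with the concentration already proved. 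Your route delivers the ``in expectation'' statement of the claim directly and avoids controlling the counts uniformly over the random window in which $T$ falls, while the paper's route gives a with-high-probability statement about the counts themselves (which you note also follows your way); both suffice for how the claim is used in Lemma~3.8. Your attention to the rounding of $q_un$ and to vertices with $p_u=0$ (never played, consistent with the convention $q_u/p_u=\infty$) matches the paper's footnote and causes no difficulty.
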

	\begin{proof}[Proof of claim]
		The analysis is similar to the one performed in the proof of Theorem \ref{thm:UpperNonuniform}.
		
		Rei generates an infinite random string $\pi=\pi_1\pi_2\dots$ where $\pi_t=u$ independently with probability $p_u$. In the $t$-th round Rei will play $\pi_t$ until an option is depleted.
		
		Let $T_u$ denote the smallest integer $t$ such that $|\{s\le t: \pi_s=u\}|=\r_u$, and let $T=\min_{u\in \supp(\r)} T_u$. Note that $T$ is the number of rounds until an option is depleted. 
		
		For any vertex $u$, we have  $T_u\sim X_1+\ldots+X_{r_u}$ where $X_i\sim \mathrm{Geo}(p_u)$ iid. Therefore $\E T_u=r_u/p_u$ and $\Var(T_u)=o((\E T_u)^2)$. It follows from Chebyshev's inequality that, for example,  
		\[ \Pr(|T_u-\E T_u|\ge (\E T_u)^{2/3})=o(1). \]
		By a union bound, we see that with high probability $T_u= (1+o(1)) \E T_u$ for all $u$, and hence $T=(\gamma+o(1)) n$ with high probability.
		
		Moreover, by a routine application of the second moment method and union bound, with high probability the number of times $u$ appears in the first $(\gamma+o(1)) n$ letters in $\pi$ is $(\gamma+o(1)) p_u n$ for all $u$, so after the first vertex is depleted, each vertex $u$ has $(q_u-p_u \gamma)n+o(n)$ actions remaining in expectation.
	\end{proof}
	
	Consider the following (possibly non-optimal) strategy for Norman: pick an arbitrary vertex $w$ and play this until some option is depleted, then play $v$ for the rest of the game.  By the claim above, the expected score for Norman under this strategy is
	\[ \left(\sum_{u\in N^+(w)}p_u -\sum_{u\in N^-(w)}p_u \right)\gamma n+\left(\sum_{u\in N^+(v)} (q_u-p_u\gamma)n-\sum_{u\in N^-(v)} (q_u-p_u \gamma)n\right)+o(n).\]
	The quantity above is a lower bound for $\S_D(\r)$.  By using Theorem~\ref{thm:UpperNonuniform} and the hypothesis on $q$, we have
	\[\S_D(\r)\le  \left(\sum_{u\in N^+(v)} q_u-\sum_{u\in N^-(v)} q_u\right)n+O(n^{2/3}).\]
	Comparing these two inequalities for $\S_D(\r)$ gives
	\[\left(\sum_{u\in N^+(w)}p_u- \sum_{u\in N^-(w)}p_u \right)\gamma n\leq \left(\sum_{u\in N^+(v)} p_u-\sum_{u\in N^-(v)} p_u \right)\gamma n+o(n).\]
	This is only possible if $\sum_{u\in N^+(w)} p_u-\sum_{u\in N^-(w)} p_u\leq \sum_{u\in N^+(v)} p_u-\sum_{u\in N^-(v)} p_u$.  As $w$ was arbitrary, this implies the result.
\end{proof}

We can now prove Proposition~\ref{prop:oblivious}.

\begin{proof}[Proof of Proposition~\ref{prop:oblivious}]
	Assume for contradiction that an oblivious optimal strategy $\c{R}$ existed, and let $p$ be the probability vector satisfying $p_u=\Pr(\c{R}(\r)=u)$ whenever $\supp(\r)=S$.  
	
	For each $v\in S$, consider the non-negative vector $q$ with $q_u=2|V(D)|$ for $u\in N^+(v)\cap S$, $q_u=1$  for $u\in S\sm N^+(v)$, and $q_u=0$ otherwise.  By hypothesis, for all $w\ne v$ we have \[|N^+(w)\cap (N^+(v)\cap S)|\le |N^+(v)\cap S|-1.\] This implies for all $w\ne v$ that
	\begin{align*}\sum_{u\in N^+(w)} q_u-\sum_{u\in N^-(w)} q_u&\le \sum_{u\in N^+(w)} q_u\le |N^+(w)\cap (N^+(v)\cap S)|\cdot 2|V(D)|+|V(D)|\cdot 1\\&\le |N^+(v)\cap S|\cdot 2|V(D)|-|V(D)|\le \sum_{u\in N^+(v)} q_u-\sum_{u\in N^-(v)} q_u.\end{align*}
	Because $\supp(q)=S$, Lemma~\ref{oblistruct} implies
	\begin{equation}\sum_{u\in N^+(v)} p_u-\sum_{u\in N^-(v)} p_u=\max_w \left\{\sum_{u\in N^+(w)} p_u-\sum_{u\in N^-(w)} p_u\right\}\label{eq:vMax}\end{equation}
	for all $v\in S$.
	
	Since $\supp(p)\sub S$, we have
	\[\sum_{v\in S}\left(\sum_{u\in N^+(v)} p_u-\sum_{u\in N^-(v)} p_u\right)=\sum_{v\in S}\left(\sum_{u\in N^+(v)\cap S} p_u-\sum_{u\in N^-(v)\cap S} p_u\right)=0.\]
	This together with the fact that \eqref{eq:vMax} holds for all $v\in S$ implies that we must have
	\begin{equation}\label{nullsubeven}
		\sum_{u\in N^+(v)} p_u-\sum_{u\in N^-(v)} p_u=\sum_{u\in N^+(v)\cap S} p_u-\sum_{u\in N^-(v)\cap S} p_u=0
	\end{equation}
	for all $v\in S$. Define $A_D[S]$ to be the submatrix of $A_D$ restricted to the rows and columns indexed by $S$. Then, \eqref{nullsubeven} states that $p$ restricted to the indices of $S$ is a nullvector for $A_D[S]$, and $p$ being a probability vector means that it is a non-zero nullvector. But from Lemma \ref{eventournament} we know that such a vector $p$ does not exist, giving the desired contradiction.
\end{proof}

Using Proposition~\ref{prop:oblivious}, we can quickly establish that an exponentially small proportion of tournaments are oblivious.

\begin{theorem}\label{thm:randTournament}
	Let $\c{T}_n$ denote the set of tournaments on $[n]$.  There exists a constant $c>0$ such that if $n$ is sufficiently large and $D$ is chosen uniformly at random from $\c{T}_n$, then
	\[\Pr(D\textrm{ is oblivious})\le e^{-cn}.\]
\end{theorem}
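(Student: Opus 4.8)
The plan is to use Proposition~\ref{prop:oblivious}: it suffices to show that with probability at least $1-e^{-cn}$, a uniformly random tournament $D\in\c{T}_n$ admits a set $S\sub V(D)$ with $|S|$ even and with the \emph{distinguishing property} that $N^+(v)\cap S\not\subseteq N^+(w)\cap S$ for all $v\in S$ and all $w\in V(D)$. The natural choice is to take $S=V(D)$ itself (or $V(D)$ minus one vertex, to fix the parity), so that the condition becomes: for every $v\in V(D)$ and every $w\ne v$, there is some $u$ with $v\to u$ but $w\not\to u$ in $D$ (one also needs to handle $w=v$, but $v\notin N^+(v)$ so the inclusion $N^+(v)\subseteq N^+(v)$ is irrelevant once we quantify correctly — actually the hypothesis of Proposition~\ref{prop:oblivious} is stated for all $w\in V(D)$ including $w=v$, so for $S=V(D)$ we'd need $N^+(v)\not\subseteq N^+(v)$, which fails; hence we must choose $S$ to be a proper subset, e.g.\ $|S| = 2\lfloor n/2\rfloor - O(1)$, chosen so that for each $v\in S$ there is a ``witness'' $u\in S$ with $v\to u$, $v\not\to v$, which is automatic, but we still need $w=v$ excluded — re-reading, the cleanest fix is that for $v\in S$ we need $N^+(v)\cap S\not\subseteq N^+(w)\cap S$ for all $w\in V(D)$; taking $w=v$ this requires some $u\in N^+(v)\cap S$, i.e.\ $v$ has an out-neighbor inside $S$, which holds with overwhelming probability for all $v$ simultaneously as long as $|S|$ is linear).

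The main step is a union bound over ``bad'' pairs. Fix a candidate $S$ with $|S|$ even and $|S|\ge n-1$. For a fixed ordered pair $(v,w)$ with $v\in S$, $w\in V(D)$, $w\ne v$, the pair is bad for $S$ if $N^+(v)\cap S\subseteq N^+(w)\cap S$, i.e.\ for every $u\in S\sm\{v,w\}$, the arc between $v$ and $u$ pointing out of $v$ forces the arc between $w$ and $u$ to point out of $w$. Conditioning on the orientations of arcs incident to $v$, each vertex $u\in N^+(v)\cap (S\sm\{w\})$ independently imposes that $w\to u$, an event of probability $1/2$. Since $|N^+(v)\cap S|$ concentrates around $|S|/2 = \Omega(n)$ (a Chernoff bound gives $|N^+(v)\cap S|\ge n/3$ with probability $1-e^{-\Omega(n)}$), we get that conditionally the pair $(v,w)$ is bad with probability at most $e^{-\Omega(n)}$, and unconditionally $\Pr[(v,w)\text{ bad}]\le e^{-\Omega(n)}$. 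There are at most $n^2$ such pairs, so by a union bound $\Pr[\text{some pair bad for }S]\le n^2 e^{-\Omega(n)}\le e^{-cn}$ for a suitable $c>0$ and $n$ large. On the complement, $S$ has the distinguishing property (the $w=v$ case handled as above), so $D$ is not oblivious by Proposition~\ref{prop:oblivious}. Since we only use \emph{one} fixed set $S$, there is no outer union bound over sets, and the argument is clean.

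The main obstacle is purely bookkeeping: getting the parity of $|S|$ right while keeping $|S|$ linear in $n$, and correctly handling the degenerate cases $w=v$ and $u\in\{v,w\}$ in the definition of ``bad'' so that the independence used in the Chernoff/union-bound step is valid. (When $w=v$ we need $N^+(v)\cap S\ne\emptyset$, which holds with probability $1-e^{-\Omega(n)}$ for all $v\in S$ by another union bound; when $w\in S$ we must exclude $u=w$ from the product of conditional probabilities, which only costs one factor and does not affect the $e^{-\Omega(n)}$ bound.) Once these are pinned down, the probabilistic content is a routine Chernoff estimate plus a polynomial union bound, and the reduction to non-obliviousness is immediate from Proposition~\ref{prop:oblivious}.
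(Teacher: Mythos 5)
Your overall approach matches the paper's: fix a single set $S$ of even size close to $n$, say that if $D$ is oblivious then by Proposition~\ref{prop:oblivious} some pair $(v,w)$ with $v\in S$ and $w\ne v$ must satisfy $N^+(v)\cap S\subseteq N^+(w)\cap S$, bound the probability of each such pair being bad by an exponentially small quantity, and take a union bound over $O(n^2)$ pairs. The only real difference is your probability estimate: you condition on $|N^+(v)\cap S|\ge n/3$ via a Chernoff bound and then get $(1/2)^{n/3}$, whereas the paper computes directly that for each $u\in S\setminus\{v,w\}$ the event ``$v\to u$ implies $w\to u$'' has probability $3/4$ independently across $u$, giving $\Pr(N^+(v)\cap S\subseteq N^+(w)\cap S)\le (3/4)^{n-3}$ without any concentration argument. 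Both yield the required $e^{-\Omega(n)}$; the direct route is cleaner.

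There is, however, a genuine error in your handling of the $w=v$ case. You claim that taking $w=v$ in the hypothesis of Proposition~\ref{prop:oblivious} ``requires some $u\in N^+(v)\cap S$,'' i.e.\ that $N^+(v)\cap S\ne\emptyset$ suffices. This is false: for \emph{any} set $A$ we have $A\subseteq A$, so $N^+(v)\cap S\not\subseteq N^+(v)\cap S$ can never hold, regardless of whether $N^+(v)\cap S$ is empty. No choice of $S$ and no amount of randomness can save this. The correct resolution is that the quantifier in Proposition~\ref{prop:oblivious} is implicitly over $w\ne v$ (this is exactly how the hypothesis is used in its own proof, where the bound $|N^+(w)\cap N^+(v)\cap S|\le |N^+(v)\cap S|-1$ is invoked only ``for all $w\ne v$''). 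Once you read the proposition this way, the $w=v$ case simply does not arise, and your union bound over $n^2$ ordered pairs $(v,w)$ with $v\ne w$ closes the argument without any separate Chernoff-style estimate for $N^+(v)\cap S\ne\emptyset$.
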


\begin{proof}
	Let $m=n$ if $n$ is even, and $m=n-1$ otherwise. Fix any subset $S\sub [n]$ of size $m$.  Let $B$ denote the event that there exists $v\in S,\ w\in V(D)$ such that $N^+(v)\cap S\subseteq N^+(w)\cap S$.  By Proposition~\ref{prop:oblivious}, if $D$ is oblivious, then $B$ must occur.  Thus it suffices to upper bound $\Pr(B)$.
	
	For any $v,w\in V(D)$, it is straightforward to show that \[\Pr(N^+(v)\cap S\subseteq N^+(w)\cap S)\le (3/4)^{n-3}.\]
	Thus by a union bound, we find
	\[\Pr(D\textrm{ is oblivious})\le \Pr(B)\le n^2\cdot (3/4)^{n-3},\]
	giving the desired result.
\end{proof}

Similarly, almost every Eulerian tournament is not oblivious.

\begin{theorem}\label{thm:randomEulerian}
	Let $\c{E}_n$ denote the set of Eulerian tournaments on $[2n+1]$.  If $D$ is chosen uniformly at random from $\c{E}_n$, then
	\[\Pr(D\textrm{ is oblivious})=O(n^{-2}).\]
\end{theorem}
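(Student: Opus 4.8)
The plan is to invoke Proposition~\ref{prop:oblivious} with $S=[2n]$, which has even size, and show that with probability $1-O(n^{-2})$ a uniformly random $D\in\c{E}_n$ satisfies its hypothesis, namely that $N^+(v)\cap S\not\subseteq N^+(w)\cap S$ for all $v\in S$ and $w\in[2n+1]$. Letting $f(D)$ count the ``bad'' pairs $(v,w)$ with $N^+(v)\cap S\subseteq N^+(w)\cap S$, Markov's inequality reduces us to showing $\E[f(D)]=O(n^{-2})$; since there are only $2n(2n+1)=O(n^2)$ pairs, it suffices to prove that $\Pr\big(N^+(v)\cap S\subseteq N^+(w)\cap S\big)=O(n^{-4})$ for each fixed pair $(v,w)$.

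The second step is a structural reduction using that every out-degree of $D$ equals $n$. Write $x=2n+1$ for the vertex outside $S$, and for $t\notin\{v,w\}$ call $t$ \emph{split} by $(v,w)$ if exactly one of $v\to t$, $w\to t$ holds; let $M_{v,w}$ be the number of split vertices. A short case analysis (handling $w=x$ and $w\in S$ separately, using $d^+(v)=d^+(w)=n$) shows that $N^+(v)\cap S\subseteq N^+(w)\cap S$ forces $N^+(v)\triangle N^+(w)$ to consist of exactly two vertices, exactly one of which lies outside $\{v,w\}$; that is, it forces $M_{v,w}=1$. Moreover, since $n=d^+(t)=d^+_{D-v-w}(t)+|\{v,w\}\cap N^+(t)|$ for each $t\notin\{v,w\}$, a vertex $t$ is split exactly when $d^+_{D-v-w}(t)=n-1$, so $M_{v,w}$ equals the number of vertices of out-degree $n-1$ in the $(2n-1)$-vertex sub-tournament $D-v-w$; summing out-degrees over $D-v-w$ forces $M_{v,w}$ to always be odd, so $M_{v,w}=1$ is its minimum possible value. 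We have thus reduced everything to showing $\Pr\big(D-v-w\text{ has exactly one vertex of out-degree }n-1\big)=O(n^{-4})$.

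For this I would pass to exact counts. If $H$ is a tournament on the $(2n-1)$-set $[2n+1]\setminus\{v,w\}$ all of whose out-degrees lie in $\{n-2,n-1,n\}$, with exactly $\ell$ equal to $n-1$, then a forced degree-balancing argument shows $H$ extends to an Eulerian tournament on $[2n+1]$ in exactly $2\binom{\ell}{(\ell+1)/2}$ ways. Hence $\Pr(M_{v,w}=1)=2H_1/Z$, where $Z=|\c{E}_n|$ and $H_\ell$ counts the tournaments $H$ above with exactly $\ell$ central-degree vertices. The numerator only involves score sequences within $\pm1$ of the regular one on $2n-1$ vertices, so each is realized by $2^{o(n)}R_{2n-1}$ tournaments, where $R_{2n-1}$ is the number of regular tournaments on $2n-1$ vertices; together with the $(2n-1)\binom{2n-2}{n-1}$ labelled realizations this gives $H_1\le n\,4^{n}\,2^{o(n)}R_{2n-1}$. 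For $Z$, keeping only the terms with $\ell$ near $n$ and again comparing score-sequence counts to $R_{2n-1}$ gives $Z\ge 2^{4n-o(n)}R_{2n-1}$: the weight $2\binom{\ell}{(\ell+1)/2}\approx 2^{\ell}$ exactly offsets the $\approx 2^{2n-1-\ell}$ entropy of the non-central vertices, so $\ell\mapsto 2\binom{\ell}{(\ell+1)/2}H_\ell$ peaks near $\ell=n$ rather than at $\ell=2n-1$. Combining these bounds yields $\Pr(M_{v,w}=1)\le 2^{-2n+o(n)}=O(n^{-4})$, and the union bound over the $O(n^2)$ pairs finishes the proof.

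The main obstacle is making this last step rigorous: one needs an asymptotic-enumeration estimate comparing, up to subexponential factors, the number of tournaments on $2n-1$ vertices with a prescribed near-regular score sequence to $R_{2n-1}$ (and a matching lower bound on $|\c{E}_n|$) --- equivalently, local-limit-type control on the coefficients of $\prod_{i<j}(x_i+x_j)$ near its central monomial. Such estimates are available in the literature (in the spirit of McKay's work on regular tournaments and Eulerian digraphs), but one must be careful about parity constraints and about uniformity of the estimate over the relevant range of $\ell$. A more self-contained route is a switching argument bounding $H_\ell/H_{\ell+2}$, though the naive degree-balancing switches can be scarce for intermediate score sequences, so one has to either use several switch types or argue on average.
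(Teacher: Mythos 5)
Your reductions up to the key estimate are correct and essentially match the paper's: you invoke Proposition~\ref{prop:oblivious} with an even set $S$ of size $2n$, union bound over the $O(n^2)$ pairs $(v,w)$, and correctly show that the containment $N^+(v)\cap S\subseteq N^+(w)\cap S$ forces $|N^+(v)\cap N^+(w)|=n-1$ (your $M_{v,w}=1$); your extension count $2\binom{\ell}{(\ell+1)/2}$ and the parity of $\ell$ are also right. The genuine gap is the step that carries all the quantitative content: the bound $\Pr\bigl(|N^+(v)\cap N^+(w)|=n-1\bigr)=O(n^{-4})$, which you rewrite as $2H_1/Z=O(n^{-4})$, is not actually proved. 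It rests on a uniform asymptotic-enumeration input (that every near-regular score sequence on $2n-1$ vertices is realized by $2^{o(n)}R_{2n-1}$ tournaments, together with a matching lower bound $Z\ge 2^{4n-o(n)}R_{2n-1}$) which you neither establish nor cite in a form covering the required uniformity and parity constraints; you flag this yourself as ``the main obstacle,'' and the fallback switching on the ratios $H_\ell/H_{\ell+2}$ is likewise only sketched. As it stands the argument is a heuristic reduction, not a proof.

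For comparison, the paper closes exactly this gap with an elementary switching argument performed directly on Eulerian tournaments (Lemma~\ref{lem:switching}), bypassing enumeration asymptotics entirely: for $D$ with $vw\in E(D)$ and $|N^+(v)\cap N^+(w)|=n-1$, one picks two vertex-disjoint arcs $a_1b_1,a_2b_2$ from $A_D=N^+(v)\cap N^+(w)$ to $B_D=N^-(v)\cap N^-(w)$ and reverses the two triangles through $v$; a degree count shows each such $D$ admits at least $2^{-7}n^4$ switchings, while any Eulerian tournament can arise from at most $120$ of them, so double counting gives $|\c{E}_n^{v,w}|\le 2^{14}n^{-4}|\c{E}_n|$. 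If you completed your enumeration route it would in fact yield an exponentially small probability (the paper remarks such an improvement is likely possible), but to make your write-up a proof you must either supply the uniform near-regular score-sequence estimates rigorously or replace that step by a self-contained switching argument of the above kind.
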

Proving Theorem~\ref{thm:randomEulerian} requires a significantly more complicated argument than that of Theorem~\ref{thm:randTournament}.  Philosophically, this is because it is easy to sample a uniformly random tournament (one can just choose the orientation of each arc uniformly and independently), but it is much harder to generate an Eulerian tournament uniformly at random, see for example \cite{mckay1998asymptotic} which implicitly provides an efficient probabilistic sampling algorithm using random walks.  We get around this issue by invoking a ``switching'' type argument, the full details of which can be found in Appendix~\ref{sec:randomEulerian}.

\section{Open Problems}\label{sec:conclusion}
Many open questions about semi-restricted games remain.  We discuss a few of these in the following subsections.

\subsection{Optimal Strategies}
In this paper, we determined all of the optimal strategies for semi-restricted $D$-games when $D$ has at most 3 vertices.  It would be of interest to do this for some (non-trivial) infinite family of digraphs as well.  Perhaps the simplest such family is the following.
\begin{problem}
	Determine all of the optimal strategies for the semi-restricted $P_n$-game, where $P_n$ denotes the directed path on $n$ vertices.
\end{problem}

In Theorem~\ref{thm:randomEulerian}, we showed that almost every Eulerian tournament $D$ fails to have a ``simple'' (i.e.\ oblivious) optimal strategy for Rei in the semi-restricted $D$-game.  A closer inspection of the proof shows that these tournaments fail to have such an optimal strategy when Rei has $|V(D)|-1$ remaining options.  Thus it is possible (though seemingly unlikely) that Rei has a simple optimal strategy when every option is still available to her.

\begin{problem}
	Is it true that for every Eulerian tournament $D$, there exists an optimal strategy for Rei in the semi-restricted $D$-game such that under this strategy, if Rei has restriction vector $\r$ with $\supp(\r)=V(D)$, then Rei plays each option with probability $1/|V(D)|$?
\end{problem}

\subsection{Improved Bounds}
Theorem~\ref{thm:RPS} shows that the expected score for Norman in semi-restricted RPS is $\Theta(\sqrt{n})$ when both players play optimally.  It would be interesting to get a more precise estimate.
\begin{question}\label{quest:RPS}
	Does there exist a $c>0$ such that $\S_D(n,n,n)\sim c \sqrt{n}$ when $D$ is the directed 3-cycle?  If so, what is $c$?
\end{question}
Because Theorem~\ref{thm:RPS} gives the optimal strategy for Rei in this game, Question~\ref{quest:RPS} is equivalent to asymptotically determining the expected value of some strange (but explicit) random variable. We have computed the exact value of $\S_D(n,n,n)$ for $n\le 100$, and this data suggests that we may have $c\approx 1.46$.

We proved effective general upper bounds on $\S_D(\r)$ through Theorems~\ref{thm:UpperUniform} and \ref{thm:UpperNonuniform}.  It is natural to ask if these results can be improved.  For example, it might be possible to improve the $O_D(M^{2/3})$ error term in Theorem~\ref{thm:UpperNonuniform} to $O_D(M^{1/2})$.

\begin{question}
	Is it true that for every digraph $D$ and restriction vector $\r$ with $M=\max_v \r_v$, we have
	\[ \S_D(\r)\le \max_v\left\{\sum_{u\in N^+(v)}\r_u-\sum_{u\in N^-(v)}\r_u\right\}+O_D(M^{1/2}).\]
\end{question}
Such a result would be an (optimal) strengthening of Theorem~\ref{thm:UpperNonuniform}, as well as a significant generalization of Theorem~\ref{thm:UpperUniform}.  The central obstacle with this question is that we do not know what strategy Rei should use so that Norman can obtain at most this many points in expectation.

When $D$ is an Eulerian digraph, Theorem~\ref{thm:UpperUniform} shows that $0\le S_D(n\cdot \1)\le O_D(\sqrt{n})$.  We think that this upper bound might be tight in general.

\begin{question}\label{quest:lowerBound}
	Does every Eulerian digraph with at least one arc satisfy $\S_D(n\cdot \1)=\Omega_D(\sqrt{n})$?
\end{question}
A positive answer to this question together with Theorem~\ref{thm:UpperUniform} would show $\S_D(n\cdot \1)=\Theta_D(\sqrt{n})$ for all such $D$.

We can show Question~\ref{quest:lowerBound} has a positive answer whenever the spectral conditions of Theorem~\ref{lowergood} are satisfied. Motivated by this, we ask when exactly these spectral conditions occur.  We emphasize that this is a purely linear algebraic question and does not involve any game theory.
\begin{question}\label{quest:spectral}
	Which Eulerian digraphs $D$ are such that their skew-adjacency matrix $A_D$ has a nullspace of dimension 1?
\end{question}
For example, Lemma~\ref{oddtournament} shows that Eulerian tournaments have this property, and we can also show that this holds for certain powers of directed Hamiltonian cycles.  Similarly it is not difficult to show that $D$ will fail to satisfy this property if its underlying graph is bipartite, has an even number of vertices, or if there exist distinct vertices $v,v'$ with $N^+(v)=N^+(v')$ and $N^-(v)=N^-(v')$.

We note that the matrix $A_D$, as well as the essentially equivalent Hermitian matrix $i\cdot A_D$, have been well studied in the literature, see for example \cite{guo2017hermitian,liu2015hermitian}, and it is possible that the results and techniques used in these papers could give insight into the answer to Question~\ref{quest:spectral}.

\textbf{Acknowledgments.}  The authors are indebted to Yuanfan Wang for many fruitful discussions on this topic.  We would also like to  thank Jack J.\  Garzella, as well as the Mathematics Graduate Student Council at UCSD, for organizing the workshop where this work was initiated.  We thank Krystal Guo and Marcus Michelen for useful discussions regarding the skew-adjacency matrix $A_D$, Lutz Warnke for discussions regarding switching arguments, and Joel Spencer for discussions about Question~\ref{quest:RPS}.

\bibliographystyle{abbrv}
\bibliography{bib}

\begin{thebibliography}{10}

\bibitem{alimohammadi2021sequential}
Y.~Alimohammadi, P.~Diaconis, M.~Roghani, and A.~Saberi.
\newblock Sequential importance sampling for estimating expectations over the
  space of perfect matchings.
\newblock {\em Annals of Applied Probability}, to appear.

\bibitem{diaconis1981analysis}
P.~Diaconis and R.~Graham.
\newblock The analysis of sequential experiments with feedback to subjects.
\newblock {\em Annals of Statistics}, 9(1):3--23, 1981.

\bibitem{diaconis2022card}
P.~Diaconis, R.~Graham, X.~He, and S.~Spiro.
\newblock Card guessing with partial feedback.
\newblock {\em Combinatorics, Probability and Computing}, 31(1):1--20, 2022.

\bibitem{fukumotokaiji}
N.~Fukumoto.
\newblock {\em Tobaku Mokushiroku: Kaiji}, volume~1.
\newblock Kodansha Ltd., 1996.

\bibitem{guo2017hermitian}
K.~Guo and B.~Mohar.
\newblock Hermitian adjacency matrix of digraphs and mixed graphs.
\newblock {\em Journal of Graph Theory}, 85(1):217--248, 2017.

\bibitem{he2021card}
J.~He and A.~Ottolini.
\newblock Card guessing and the birthday problem for sampling without
  replacement.
\newblock {\em arXiv preprint arXiv:2108.07355}, 2021.

\bibitem{janson2018tail}
S.~Janson.
\newblock Tail bounds for sums of geometric and exponential variables.
\newblock {\em Statistics \& Probability Letters}, 135:1--6, 2018.

\bibitem{krityakierne2022no}
T.~Krityakierne and T.~A. Thanatipanonda.
\newblock No feedback? no worries! the art of guessing the right card.
\newblock {\em arXiv preprint arXiv:2205.08793}, 2022.

\bibitem{liu2015hermitian}
J.~Liu and X.~Li.
\newblock {Hermitian-adjacency matrices and Hermitian energies of mixed
  graphs}.
\newblock {\em Linear Algebra and its Applications}, 466:182--207, 2015.

\bibitem{liu2021card}
P.~Liu.
\newblock On card guessing game with one time riffle shuffle and complete
  feedback.
\newblock {\em Discrete Applied Mathematics}, 288:270--278, 2021.

\bibitem{mckay1998asymptotic}
B.~D. McKay and R.~W. Robinson.
\newblock {Asymptotic enumeration of Eulerian circuits in the complete graph}.
\newblock {\em Combinatorics, Probability and Computing}, 7(4):437--449, 1998.

\bibitem{spiro2022online}
S.~Spiro.
\newblock Online card games.
\newblock {\em Electronic Journal of Probability}, 27:1--15, 2022.

\bibitem{von2007theory}
J.~Von~Neumann and O.~Morgenstern.
\newblock {\em Theory of Games and Economic Behavior}.
\newblock Princeton University Press, 60th anniversary edition, 2007.

\end{thebibliography}
\newpage 
\appendix
\section{Proof of Theorem~\ref{thm:3path}}\label{sec:path}
This section is dedicated to proving Theorem~\ref{thm:3path}, which we recall says that if $D$ is the directed 3-path $1\to 2\to 3$, then the optimal strategies for Rei in the semi-restricted $D$-game are exactly those $\R$ satisfying $\Pr(\R(\r)=3)=1/2$ whenever $\{3\}\subsetneq \supp(\r)$.

\begin{proof}[Proof of Theorem~\ref{thm:3path}]Let $\R$ be an arbitrary optimal strategy for Rei, and as before we argue by induction that $\Pr(\R(\r)=3)=1/2$ whenever $\{3\}\subsetneq \supp(\r)$. We fix an arbitrary $\r$ with $\{3\}\subsetneq \supp(\r)$ and write $\Pr(\R(\r)=u)=p_u$ for each $u\in \{1,2,3\}$.
	
	If $p_2=0$, then by applying the techniques in the proof of Theorem~\ref{thm:RPS2}, we can argue $p_1=p_3=1/2$ as desired. Similarly if $p_1=0$, we can argue $p_2=p_3=1/2$. As such we skip these arguments and consider only the case  $p_1,p_2>0$.  Our aim is to show $p_2=p_3-p_1$, which implies $p_3=1/2$ as desired.
	
	If $p_2>p_3-p_1$, let
	$$\epsilon=\min \{p_2,p_2-(p_3-p_1)\}>0$$
	and consider a different strategy $\R'$ for Rei with
	$$\Pr(\R'(\r)=1)=p_1+\frac{\epsilon}{2},\ \Pr(\R'(\r)=2)=p_2-\epsilon,\ \Pr(\R'(\r)=3)=p_3+\frac{\epsilon}{2},$$
	and $\Pr(\R'(\r')=u)=\Pr(\R(\r')=u)$ for all vertices $u$ and $\r'\ne \r$. Note that, by our choice of $\epsilon$, we have
	$$\max\left\{p_2-\epsilon,\left(p_1+\frac{\epsilon}{2}\right)-\left(p_3+\frac{\epsilon}{2}\right),-(p_2-\epsilon)\right\}=\max\{p_2,p_3-p_1,-p_2\}-\epsilon.$$
	Using Fact~\ref{fact:recurrence} and that $\R$ is an optimal strategy, we find
	$$\S_D(\r;\R')-\S_D(\r;\R)=\epsilon\left(\frac{1}{2}\S_D(\r-\del_1)+\frac{1}{2}\S_D(\r-\del_3)-\S_D(\r-\del_2)-1\right).$$
	By Lemma~\ref{lem:switch} and the structure of $D$, we have
	$$\S_D(\r-\del_1)\leq \S_D(\r-\del_2)+1\text{ and }\S_D(\r-\del_3)<\S_D(\r-\del_2)+1,$$
	which implies $\S_D(\r;\R')-\S_D(\r;\R)<0$. This contradicts $\R$ being optimal, so we can not have $p_2>p_3-p_1$. If $p_2<p_3-p_1$, we can take $\epsilon=(p_3-p_1)-p_2>0$ and consider another strategy $\R'$ for Rei with
	$$\Pr(\R'(\r)=1)=p_1,\ \Pr(\R'(\r)=2)=p_2+\frac{\epsilon}{2},\ \Pr(\R'(\r)=3)=p_3-\frac{\epsilon}{2},$$
	with $\R'$ coinciding with $\R$ on every other vector $\r'$. Similarly, $\R'$ will contradict the fact that $\R$ is optimal, so we must have $p_2=p_3-p_1$ as wanted.  This proves that every optimal strategy $\R$ satisfies $\Pr(\R(\r)=3)=1/2$ when $\{3\}\subsetneq \supp(\r)$.  It remains to show that all such strategies are optimal, i.e.\ that $\S_D(\r;\R)=\S_D(\r;\R')$ for any strategies $\R,\R'$ of this form.
	
	Take $\R$ to be the unique strategy for Rei defined by $\Pr(\R(\r)=3)=1/2$ whenever $\{3\}\subsetneq \supp(\r)$ and $\Pr(\R(\r)=2)=0$ whenever $1\in \supp(\r)$. We shall prove by induction on $\r_1+\r_2+\r_3$ that\begin{itemize}
		\item $\S_D(\r;\R)=\S_D(\r;\R')$ for an arbitrary strategy $\R'$ satisfying $\Pr(\R'(\r)=3)=1/2$ whenever $\{3\}\subsetneq \supp(\r)$;
		\item $\S_D(\r)=\S_D(\r-\del_1+\del_2)-1$ whenever $\r_1>0$.
	\end{itemize}
	The base case when $\r_1+\r_2+\r_3=0$ is trivial. Suppose we have proved these two identities for all $\r'$ with $\r'_1+\r'_2+\r'_3< \r_1+\r_2+\r_3$.
	
	First we argue $\S_D(\r;\R)=\S_D(\r;\R')$. Let us write $\Pr(\R(\r)=u)=p_{u}$ and $\Pr(\R'(\r)=u)=p'_{u}$ for all $u$. If either $\r_1=0$ or $\r_2=0$, by the description of $\R$ and $\R'$, we know they coincide on all vectors $\r'$ with $\r'_1\leq \r_1$, $\r'_2\leq \r_2$, and $\r'_3\leq\r_3$. So we clearly have the desired identity in this case. If $\r_1,\r_2>0$ and $\r_3=0$, by the description of $\R,\R'$, we have $p_{1}=1$, $p_2=p_{3}=p'_{3}=0$, and $p'_1=1-p'_2$. Using Fact~\ref{fact:recurrence} we can compute\begin{align*}
		\S_D(\r;\R)-\S_D(\r;\R')&=\S_D(\r-\del_1)-\left(p_2'+(1-p_2')\S_D(\r-\del_1)+p'_2\S_D(\r-\del_2)\right)\\
		&=-p_2'(1-\S_D(\r-\del_1)+\S_D(\r-\del_2))=0,
	\end{align*}where the last equality follows from the second identity from the inductive hypothesis. If $\r_1,\r_2,\r_3>0$, by the description of $\R,\R'$, we have $p_{1}=1/2$, $p_2=0$, $p_{3}=p'_{3}=1/2$, and $p'_1=1/2-p'_2$. Again, by Fact~\ref{fact:recurrence} we can compute
	$$\S_D(\r;\R)-\S_D(\r;\R')=-p'_{2}(1-\S_D(\r-\del_1)+\S_D(\r-\del_2))=0.$$
	So we conclude the first wanted identity for our inductive process.
	
	Next we argue $\S_D(\r)=\S_D(\r-\del_1+\del_2)-1$ under the condition $\r_1>0$. Note that we have already showed $\S_D(\r;\R)=\S_D(\r;\R')$, and similarly we have $\S_D(\r-\del_1+\del_2;\R)=\S_D(\r-\del_1+\del_2;\R')$. So by the first half of this proof, we have
	$$\S_D(\r)=\S_D(\r;\R)\text{\quad and\quad}\S_D(\r-\del_1+\del_2)=\S_D(\r-\del_1+\del_2;\R).$$
	
	Now, if $\r_3=0$, it is easy to check $\S_D(\r)=\r_2=\S_D(\r-\del_1+\del_2)-1$ as wanted. If $\r_3>0$ and $\r_1=1$, by Fact~\ref{fact:recurrence} and the description of $\R$, we have
	\begin{align*}
		&\S_D(\r)=\frac{1}{2}\S_D(\r-\del_1)+\frac{1}{2}\S_D(\r-\del_3),\\
		&\S_D(\r-\del_1+\del_2)=\frac{1}{2}+\frac{1}{2}\S_D(\r-\del_1)+\frac{1}{2}\S_D(\r-\del_1+\del_2-\del_3).
	\end{align*} Together with the inductive hypothesis $\S_D(\r-\del_3)=\S_D(\r-\del_1+\del_2-\del_3)-1$, we can conclude the wanted identity. If $\r_3>0$ and $\r_1>1$, by Fact~\ref{fact:recurrence} and the description of $\R$, we have
	\begin{align*}
		&\S_D(\r)=\frac{1}{2}\S_D(\r-\del_1)+\frac{1}{2}\S_D(\r-\del_3),\\
		&\S_D(\r-\del_1+\del_2)=\frac{1}{2}\S_D(\r-2\del_1+\del_2)+\frac{1}{2}\S_D(\r-\del_1+\del_2-\del_3).
	\end{align*} Again, applying the inductive hypothesis for $\S_D(\r-\del_1)$ and $\S_D(\r-\del_3)$ respectively, we can conclude $\S_D(\r)=\r_2=\S_D(\r-\del_1+\del_2)-1$ as wanted. Hence we conclude the inductive process, which completes the proof.
\end{proof}
\newpage
\section{Proof of Theorem~\ref{thm:randomEulerian}}\label{sec:randomEulerian}
This section is dedicated to proving Theorem~\ref{thm:randomEulerian}, which we recall says that if $\c{E}_n$ is the set of Eulerian tournaments on $[2n+1]$ and $D$ is chosen uniformly at random from $\c{E}_n$, then
\[\Pr(D\textrm{ is oblivious})=O(n^{-2}).\]
The main lemma we need is the following.

\begin{lemma}\label{lem:switching}
	Let $\c{E}_{n}$ denote the set of Eulerian tournaments on $[2n+1]$, and for $v,w\in [2n+1]$, let $\c{E}_n^{v,w}\subseteq \c{E}_{n}$ denote the set of Eulerian tournaments $D$ with $vw\in E(D)$ and with $|N^+(v)\cap N^+(w)|=n-1$.  If $n\ge 16$, then
	\[|\c{E}_n^{v,w}|\le 2^{14}n^{-4} |\c{E}_n|.\]
\end{lemma}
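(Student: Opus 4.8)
The plan is to use a switching argument comparing $\c{E}_n^{v,w}$ with the larger set $\c{E}_n$ by means of a many-to-many map. Fix $v,w$ and suppose $D\in\c{E}_n^{v,w}$, so that $vw\in E(D)$ and $A:=N^+(v)\cap N^+(w)$ has $|A|=n-1$. Since $D$ is Eulerian, $d^+(v)=d^+(w)=n$. From $vw\in E(D)$ we get $w\notin N^+(v)$ and $v\in N^+(w)$; counting, $N^+(v)=A\cup\{x\}$ for a unique $x\notin\{v,w\}$ with $x\notin N^+(w)$, and $N^+(w)=A\cup\{v\}$. Likewise the in-neighborhoods are forced: $N^-(v)=\{w\}\cup(B\setminus\{x\})\cup$ (the remaining vertices), and in particular the vertex set splits, relative to the ordered pair $(v,w)$, into $A$ (beaten by both), the single vertex $x$ (beaten by $v$, beats $w$), the single vertex... — the key structural point is that there is essentially one ``exceptional'' vertex $x$ and the rest of the picture around $\{v,w\}$ is rigid. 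A \emph{switching} operation will reverse a small, carefully chosen set of arcs incident to $v$ and $w$ (for instance, flipping $vw$ together with the arcs from $v$ and $w$ to two suitably chosen common out-neighbors, or to $x$) so as to (i) keep all out-degrees equal to $n$, hence stay inside $\c{E}_n$, and (ii) destroy the property $|N^+(v)\cap N^+(w)|=n-1$, landing outside $\c{E}_n^{v,w}$ — or at least move to a configuration we can control.

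First I would set up the forward count: show that from each $D\in\c{E}_n^{v,w}$ there are at least $\Omega(n^4)$ valid switchings, each producing a tournament in $\c{E}_n$ (degree-preservation must be checked: flipping $vw$ changes $d^+(v)$ by $-1$ and $d^+(w)$ by $+1$, so one compensates by flipping one out-arc of $v$ into an in-arc and one in-arc of $w$ into an out-arc, chosen among the $\approx n$ candidates on each side, giving $\approx n^2$ choices; to get the $n^4$ one performs two independent such compensating double-flips, or flips arcs at two common neighbors — the exact bookkeeping is routine but must be done so that \emph{all} degrees stay balanced). Second I would bound the reverse count: show that each tournament in $\c{E}_n$ arises as the output of at most $O(1)$ (or at most $O(n^{14-4}/\text{something})$, whatever the arithmetic forces) switchings applied to members of $\c{E}_n^{v,w}$, by arguing that the switched arcs and the role of $v,w$ can be reconstructed from the output up to a bounded number of choices — here one uses that after switching, the pair $(v,w)$ is identifiable (e.g.\ it is the unique pair with a prescribed local deficiency) and the flipped arcs lie in a small identifiable set. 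Combining forward $\ge c_1 n^4|\c{E}_n^{v,w}|$ with backward $\le c_2|\c{E}_n|$ yields $|\c{E}_n^{v,w}|\le (c_2/c_1)n^{-4}|\c{E}_n|$, and one then checks $c_2/c_1\le 2^{14}$ for $n\ge 16$.

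The main obstacle I anticipate is the degree-preservation constraint: because $\c{E}_n$ is the set of \emph{Eulerian} tournaments, every arc flip must be paired with compensating flips so that each of the $2n+1$ vertices retains out-degree exactly $n$, and simultaneously the switching must genuinely change $|N^+(v)\cap N^+(w)|$ and be invertible-with-bounded-multiplicity. Balancing all three requirements — enough forward switchings ($\gtrsim n^4$), bounded reverse multiplicity, and exact Eulerianity — is delicate; the cleanest route is probably to flip only arcs among $v$, $w$, and a pair $\{a,b\}$ of common out-neighbors (reversing $va,vb$ and, to fix degrees, reversing $ab$ or a pair of arcs from $a,b$ back to $v,w$), so that the only degrees affected are those of $v,w,a,b$ and they are corrected within the gadget. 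Once this local gadget is pinned down, the counting is mechanical. I would also need the elementary estimate, used implicitly, that $|\c{E}_n|$ is large enough that these crude bounds are not vacuous, but this follows from the forward/backward comparison itself rather than requiring the precise asymptotics of \cite{mckay1998asymptotic}. Finally, to pass from this lemma to Theorem~\ref{thm:randomEulerian} one takes a union bound over the $O(n^2)$ pairs $(v,w)$ together with Proposition~\ref{prop:oblivious} applied with $S=[2n+1]$ (or $S$ of even size $2n$), since an oblivious Eulerian tournament must contain some pair $v,w$ with $N^+(v)\cap S\subseteq N^+(w)\cap S$, which for a tournament forces $|N^+(v)\cap N^+(w)|$ to be large in the relevant sense.
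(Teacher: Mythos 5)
Your high-level plan coincides with the paper's: compare $|\c{E}_n^{v,w}|$ to $|\c{E}_n|$ by double counting switchings, with a forward count of order $n^4$ per tournament in $\c{E}_n^{v,w}$ and an $O(1)$ bound on the reverse multiplicity. However, there is a genuine gap at the crux of the argument: you never actually exhibit a switching operation that satisfies all three requirements you correctly identify (preserving Eulerianity, being available $\Omega(n^4)$ times, and having bounded reverse multiplicity), and the concrete gadgets you do suggest fail. Flipping the arc $vw$ and ``compensating'' is never made precise, and your cleanest candidate --- reversing $va$, $vb$ and $ab$ for common out-neighbors $a,b$ --- does not preserve degrees (it lowers $d^+(v)$ by $2$ and raises $d^+(b)$ by $2$), so the output leaves $\c{E}_n$. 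Likewise the reverse count is only asserted: since $v,w$ are fixed in the statement there is nothing to ``identify'' about the pair, but one must reconstruct, from the output tournament alone, which arcs were switched, and you give no mechanism for that. There is also a small but telling orientation slip: $vw\in E(D)$ means $w\in N^+(v)$, so in fact $N^+(v)=A\cup\{w\}$ and $N^+(w)=A\cup\{u\}$ for a unique $u$ with $v\to w\to u\to v$; your description swaps the roles of $v$ and $w$, which suggests the structural analysis on which the gadget must be built was not carried through.

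For comparison, the paper's resolution of exactly the obstacle you flag is to reverse two vertex-disjoint directed triangles through $v$ only: with $A_D=N^+(v)\cap N^+(w)$ and $B_D=N^-(v)\cap N^-(w)$ (both of size $n-1$), one picks vertex-disjoint arcs $a_1b_1,a_2b_2$ with $a_i\in A_D$, $b_i\in B_D$, and reverses the cycles $v\to a_i\to b_i\to v$; reversing directed cycles preserves all in- and out-degrees, so the result is automatically Eulerian. An Eulerian edge count gives $e(A_D,B_D)\ge\tfrac12(n-1)(n-4)$, whence at least $2^{-7}n^4$ such unordered pairs for $n\ge 16$, which is the forward count. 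For the reverse multiplicity, in the switched tournament exactly five vertices, namely $a_1,b_1,a_2,b_2$ and the unique $u$ with $v\to w\to u\to v$, have one arc to and one arc from $\{v,w\}$, so each output has at most $5!=120$ preimages; combining the two counts gives the stated constant. Until you specify a degree-preserving gadget of this kind and a concrete reconstruction argument bounding the preimages, the proposal does not constitute a proof.
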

\begin{proof}
	It is not difficult to see that for each $D\in \c{E}_n^{v,w}$, there exists a vertex $u_D$ and disjoint sets $A_D,B_D$ of size $n-1$ such that $N^+(v)\cap N^+(w)=A_D,\ N^-(v)\cap N^-(w)=B_D$, and $v\to w\to u_D\to v$.  We use this structural result together with a ``switching'' type argument to prove the bound.
	
	Given $D\in \c{E}_n^{v,w}$, we say that a pair of arcs $\{a_1b_1, a_2b_2\}$ is \textit{$D$-valid} if these are vertex disjoint arcs in $D$ with $a_1,a_2\in A_D$ and $b_1,b_2\in B_D$.  Given a $D$-valid pair $\{a_1b_1, a_2b_2\}$, we define $D[a_1b_1,a_2b_2]$ to be the digraph which has the arcs $b_1\to a_1\to v\to b_1$ and $b_2\to a_2\to v\to b_2$ and which otherwise agrees with $D$.  It is not difficult to see that $D[a_1b_1,a_2b_2]$ is an Eulerian tournament because $D$ was Eulerian and $\{a_1b_1, a_2b_2\}$ was $D$-valid.
	
	With the above definition in mind, we construct an auxiliary bipartite graph $G$ with vertex set $\c{E}_n^{v,w}\sqcup \c{E}_n$ by having $D\in \c{E}_n^{v,w}$ and $D'\in \c{E}_n$ adjacent in $G$ if and only if $D'=D[a_1b_1,a_2b_2]$ for some $D$-valid pair $\{a_1b_1,a_2b_2\}$.
	\begin{claim}
		Each $D\in \c{E}_n^{v,w}$ has $\deg_G(D)\ge 2^{-7} n^4$.
	\end{claim}
	\begin{proof}
		Because $D[a_1b_1,a_2b_2]$ is a distinct digraph for each distinct $D$-valid pair $\{a_1b_1,a_2b_2\}$, the claim is equivalent to saying each $D\in \c{E}_n^{v,w}$ has at least $2^{-7} n^2$ many $D$-valid pairs $\{a_1b_1,a_2b_2\}$.
		
		Given two (possibly non-disjoint) sets $S,T\subseteq V(D)$, let $e(S,T)$ be the number of arcs $s\to t$ in $D$ with $s\in S$ and $t\in T$.  Because $D$ is Eulerian and $|A_D|=n-1$, we have
		\begin{align*}n(n-1)=e(A_D,V(D))&=e(A_D,A_D)+e(A_D,\{v,w\})+e(A_D,\{u_D\})+e(A_D,B_D)\\&\le \binom{n-1}{2}+0+(n-1)+e(A_D,B_D),\end{align*}
		which implies $e(A_D,B_D)\ge \frac{1}{2}(n-1)(n-4)$.  Thus there exist at least $\frac{1}{2}(n-1)(n-4)$ choices for arcs $a_1 b_1$ from $A_D$ to $B_D$, and given such an arc, there exist at least \[e(A_D,B_D)-(n-1)\cdot 2\ge \frac{1}{2}(n-1)(n-8)\] arcs $a_2b_2$ with $a_1\ne a_2,b_1\ne b_2$.  In total then the number of ways we can construct an (ordered) $D$-valid pair $(a_1b_1,a_2b_2)$ is at least \[\half (n-1)(n-4)\cdot \half (n-1)(n-8)\ge 2^{-6} n^4\]
		for $n\ge 16$.  This double counts the number of (unordered) $D$-valid pairs $\{a_1b_1,a_2b_2\}$, so dividing this quantity by 2 gives the desired lower bound.
	\end{proof}
	\begin{claim}
		Each $D'\in \c{E}_n$ has $\deg_G(D')\le 120$.
	\end{claim}
	\begin{proof}
		Observe that if $D\in \c{E}_n^{v,w}$ and $\{a_1b_1,a_2b_2\}$ is $D$-valid, then there are exactly 5 vertices $x\in V(D[a_1b_1,a_2b_2])$ which have one arc to $\{v,w\}$ and one arc from $\{v,w\}$, namely this holds for $a_1, b_1, a_2, b_2,u_D$.  Thus $\deg_G(D')=0$ if $D'\in \c{E}_n$ does not have 5 vertices with this property, and otherwise there are trivially at most $5!=120$ digraphs $D$ which could have $D'=D[a_1b_1,a_2b_2]$ (namely by choosing which of its 5 vertices to play the roles of $a,a',b,b',u_D$).
	\end{proof}
	
	With these two claims, we have
	\[2^{-7} n^2|\c{E}_n^{v,w}|\le \sum_{D\in \c{E}_n^{v,w}} \deg_G(D)=\sum_{D'\in \c{E}_n} \deg_G(D')\le 120 |\c{E}_n|,\]
	and rearranging this inequality gives the result.
\end{proof}

We can now prove almost every Eulerian tournament is not oblivious.

\begin{proof}[Proof of Theorem~\ref{thm:randomEulerian}]
	Let $\c{E}_n^*\sub \c{E}_n$ be the set of oblivious Eulerian digraphs on $[2n+1]$. Then the theorem statement is equivalent to saying
	\[|\c{E}_n^*|=O(n^{-2} |\c{E}_n|).\]
	Fix an arbitrary set $S\sub [2n+1]$ of $2n$ vertices, and let $\c{E}_n^{**}$ be the set of digraphs which have $N^+(v)\cap S\subseteq N^+(w)\cap S$ for some distinct $v,w\in [2n+1]$.  We claim that \[\c{E}_n^*\sub \c{E}_n^{**}\sub \bigcup_{v,w} \c{E}_n^{v,w}.\]  
	
	Indeed, $\c{E}_n^*\sub \c{E}_n^{**}$ follows from Proposition~\ref{prop:oblivious}.  Let $D\in \c{E}_n^{**}$, say with $N^+(v)\cap S\subseteq N^+(w)\cap S$.  Note that $|N^+(v)\cap S|\ge |N^+(v)|-1=n-1$, so in particular $|N^+(v)\cap N^+(w)|\ge n-1$.  We can not have $N^+(v)=N^+(w)$ since either $vw\in E(D)$ or $wv\in E(D)$, so we must have $|N^+(v)\cap N^+(w)|= n-1$.  Thus $D\in \c{E}_n^{v,w}\cup \c{E}_n^{w,v}$, proving the claim.
	
	With this claim, we have
	\[|\c{E}_n^*|\le |\c{E}_n^{**}|\le \left|\bigcup_{v,w} \c{E}_n^{v,w}\right|=O(n^{-2} |\c{E}_n|),\]
	where this last step used Lemma~\ref{lem:switching}.  We conclude the result.
\end{proof}
We note that it is likely that a more sophisticated argument could be used to improve the bound of Theorem~\ref{thm:randomEulerian} to show that it is exponentially unlikely for a random Eulerian tournament to be oblivious.

\newpage 
\section{Restricted Games}\label{sec:restricted}
This section is devoted to studying games where both players are restricted.  In particular, we look at the case when two players Alice and Bob are given restriction vectors $\a,\b$ with $\sum \a_i=\sum \b_j=N$ and play some zero sum game a total of $N$ times, with e.g.\ Alice being forced to use each option $i$ a total of $\a_i$ times.  We begin with some formal definitions analogous to the definitions of Section~\ref{sec:optimal}. 

We use $G$ to denote a simultaneous zero-sum game played by Alice and Bob where $O_A$ ($O_B$ resp.) consists of the set of options Alice (Bob resp.) can play at each round.  We let $G(i,j)$ denote the score that Alice receives in this game if Alice plays $i$ and Bob plays $j$.  We say that $(\a,\b)$ is a pair of restriction vectors if $\a,\b$ are non-negative integral vectors indexed by $O_A,O_B$ with $\sum \a_i=\sum \b_j$.

Given a game $G$ as above, a \textit{strategy for Alice in the restricted $G$-game} is a function $\A$ from pairs of restriction vectors $(\a,\b)$ to random variables such that $\supp(\A(\a,\b))\subseteq \supp(\a)$ for all $(\a,\b)$. A \emph{strategy for Bob} $\B$ is defined analogously by requiring $\supp(\B(\a,\b))\subseteq \supp(\b)$. Recall that $\del_i$ is a restriction vector with value 1 on entry $i$ and 0 elsewhere.  Given a pair of strategies $\c{A},\c{B}$ for Alice and Bob and restriction vectors $(\a,\b)$, we define the \textit{score} (for Alice) $\mathbf{S}_G(\a,\b;\c{A},\c{B})$ by $\mathbf{S}_G(\a,\b;\c{A},\c{B})=0$ if $\sum\a_i=\sum\b_j=0$, and otherwise
\[\mathbf{S}_G(\a,\b;\c{A},\c{B})=G(\c{A}(\a,\b),\c{B}(\a,\b))+\mathbf{S}_G(\a-\delta_{\c{A}(\a,\b)},\b-\delta_{\c{B}(\a,\b)};\c{A},\c{B}).\]
We define the \textit{expected score} $\S_G(\a,\b;\A,\B)=\E[\mathbf{S}_G(\a,\b;\c{A},\c{B})]$.  We have the following analogue of Fact~\ref{fact:recurrence} for restricted games.
\begin{fact}\label{fact:restricted}
	Let $\A$ and $\B$ be strategies for Alice and Bob respectively. If $(\a,\b)$ is a restriction vector pair with $p_i=\Pr(\A(\a,\b)=i)$ and $q_j=\Pr(\B(\a,\b)=j)$ for each $i\in O_A,j\in O_B$, then
	\[\S_G(\a,\b;\A,\B)=\sum_i\sum_j p_iq_j(G(i,j)+\S_G(\a-\del_i,\b-\del_j;\A,\B)).\]
\end{fact}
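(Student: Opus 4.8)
The plan is to unwind one step of the recursive definition of $\mathbf{S}_G$ and then take expectations; the statement is essentially this unwinding, so the only real content is the bookkeeping of independent randomness across rounds. First I would note that a restriction vector pair $(\a,\b)$ has $\sum_i\a_i=\sum_j\b_j$; write $N$ for this common value. If $N\ge 1$ then $\supp(\a),\supp(\b)\ne\emptyset$, and for each $i\in\supp(\a)$ and $j\in\supp(\b)$ the pair $(\a-\del_i,\b-\del_j)$ is again a valid restriction vector pair (with common total $N-1$). By the defining recursion,
\[\mathbf{S}_G(\a,\b;\A,\B)=G(\A(\a,\b),\B(\a,\b))+\mathbf{S}_G\big(\a-\del_{\A(\a,\b)},\,\b-\del_{\B(\a,\b)};\A,\B\big),\]
so taking expectations and using linearity reduces the claim to evaluating the two terms on the right.

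For the first term, Alice's and Bob's choices in a single round of a simultaneous game are made independently, so $(\A(\a,\b),\B(\a,\b))$ equals $(i,j)$ with probability $p_iq_j$, giving $\E[G(\A(\a,\b),\B(\a,\b))]=\sum_i\sum_j p_iq_j\,G(i,j)$. For the second term I would condition on the first-round outcome and apply the law of total expectation:
\[\E\big[\mathbf{S}_G(\a-\del_{\A(\a,\b)},\b-\del_{\B(\a,\b)};\A,\B)\big]=\sum_i\sum_j p_iq_j\,\E\big[\mathbf{S}_G(\a-\del_i,\b-\del_j;\A,\B)\mid \A(\a,\b)=i,\ \B(\a,\b)=j\big].\]
The key point is that, under the standard convention that each invocation of a strategy at a restriction-vector pair uses fresh randomness independent of all previous rounds, conditioning on the first round being $(i,j)$ leaves the conditional law of the continuation unchanged, so the conditional expectation above equals $\S_G(\a-\del_i,\b-\del_j;\A,\B)$. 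Substituting the two evaluations back yields exactly
\[\S_G(\a,\b;\A,\B)=\sum_i\sum_j p_iq_j\big(G(i,j)+\S_G(\a-\del_i,\b-\del_j;\A,\B)\big).\]

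The only mildly delicate step is the formal justification that the continuation game is conditionally independent of the first round's choices — the usual subtlety in defining games recursively through strategies — and I expect it to be the ``hard'' part only in the sense of requiring care rather than a new idea. It can be handled exactly as in the proof of Fact~\ref{fact:recurrence} (equivalently, as in Spiro~\cite[Lemma~2.1]{spiro2022online}); if a fully self-contained argument is preferred, one can instead induct on $N$, applying the recursive definition and the inductive hypothesis to the reduced pair $(\a-\del_i,\b-\del_j)$ at each step.
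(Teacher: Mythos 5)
Your proof is correct and matches the approach the paper implicitly takes: the paper states Fact~\ref{fact:restricted} without proof, treating it as a routine unwinding of the recursive definition of $\mathbf{S}_G$ analogous to Fact~\ref{fact:recurrence}, whose proof is in turn deferred to Spiro~\cite[Lemma~2.1]{spiro2022online}. Your handling of the one delicate point — that conditioning on the first round's outcome leaves the conditional law of the continuation unchanged, thanks to fresh independent randomness at each state — is exactly the subtlety that reference addresses, and the rest (linearity of expectation, independence of Alice's and Bob's simultaneous choices) is straightforward bookkeeping.
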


A strategy for Alice $\c{A}$ is \textit{optimal} if for every strategy $\c{B}$ for Bob we have $S_G(\a,\b;\c{A},\c{B})=\max_{\c{A}'}S_G(\a,\b;\c{A}',\c{B})$, and we similarly define what it means for $\c{B}$ to be an optimal strategy for Bob.  We define the \textit{uniform strategy for Alice} $\c{U}_A$ by having $\Pr[\c{U}_A(\a,\b)=i]=\frac{\a_i}{\sum_{i'} \a_{i'}}$, and we similarly define the uniform strategy for Bob $\c{U}_B$.

\begin{proposition}\label{prop:restricted}
	In the restricted $G$-game, $\c{U}_A$ is an optimal strategy for Alice and $\c{U}_B$ is an optimal strategy for Bob.
\end{proposition}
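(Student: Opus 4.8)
The plan is to compute, in closed form, the expected score of the restricted $G$-game when one of the two players uses the uniform strategy, and to observe that this quantity does not depend on the \emph{other} player's strategy at all; the optimality of $\c{U}_A$ and $\c{U}_B$ then follows formally. Concretely, writing $N=\sum_i\a_i=\sum_j\b_j$, I claim that for \emph{every} strategy $\c{B}$ for Bob,
\[\S_G(\a,\b;\c{U}_A,\c{B})=\frac1N\sum_{i\in O_A}\sum_{j\in O_B}\a_i\b_jG(i,j),\]
with the right-hand side read as $0$ when $N=0$, and symmetrically that $\S_G(\a,\b;\c{A},\c{U}_B)$ equals the same quantity for every strategy $\c{A}$ for Alice.

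The proof of the claim is by induction on $N$, the case $N=0$ being immediate. For the inductive step, set $p_i=\a_i/N$ and $q_j=\Pr(\c{B}(\a,\b)=j)$, so that $\c{U}_A$ plays according to $p$. Fact~\ref{fact:restricted} gives
\[\S_G(\a,\b;\c{U}_A,\c{B})=\sum_{i,j}\frac{\a_i}{N}q_j\Bigl(G(i,j)+\S_G(\a-\del_i,\b-\del_j;\c{U}_A,\c{B})\Bigr),\]
and since $\c{B}$ restricted to restriction-vector pairs of total size $N-1$ is again a valid strategy for Bob, the inductive hypothesis lets me replace each $\S_G(\a-\del_i,\b-\del_j;\c{U}_A,\c{B})$ by $\frac1{N-1}\sum_{i',j'}(\a-\del_i)_{i'}(\b-\del_j)_{j'}G(i',j')$ (this term simply vanishing when $N=1$, since then $\a-\del_i=0$).

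It then remains to verify the polynomial identity in $q$
\[\sum_{i,j}\frac{\a_i}{N}q_j\Bigl(G(i,j)+\frac1{N-1}\sum_{i',j'}(\a-\del_i)_{i'}(\b-\del_j)_{j'}G(i',j')\Bigr)=\frac1N\sum_{i,j}\a_i\b_jG(i,j).\]
Expanding $(\a-\del_i)_{i'}(\b-\del_j)_{j'}=\a_{i'}\b_{j'}-\a_{i'}\bone(j'=j)-\b_{j'}\bone(i'=i)+\bone(i'=i)\bone(j'=j)$ and summing each of the four pieces against the weights $\a_iq_j$, using only $\sum_i\a_i=N$ and $\sum_jq_j=1$, one finds that the total coefficient of each $G(i',j')$ collapses to $\frac1N\a_{i'}\b_{j'}$; in particular all the $q$-dependent contributions (a $+\frac1N\sum\a_{i'}q_{j'}G$ from the first piece and a $-\frac1N\sum\a_{i'}q_{j'}G$ from the cross terms) cancel. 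The symmetric statement for $\c{U}_B$ follows by the same computation with the roles of Alice and Bob (and of $\a$ and $\b$) exchanged.

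Granting the claim, write $V=\frac1N\sum_{i,j}\a_i\b_jG(i,j)$. Alice playing $\c{U}_A$ secures a score of exactly $V$ no matter how Bob plays, so $\min_{\c{B}}\S_G(\a,\b;\c{U}_A,\c{B})=V$; on the other hand, for any Alice strategy $\c{A}$, Bob can answer with $\c{U}_B$ and hold the score to $V$, so $\min_{\c{B}}\S_G(\a,\b;\c{A},\c{B})\le\S_G(\a,\b;\c{A},\c{U}_B)=V$. Hence $\c{U}_A$ attains $\max_{\c{A}}\min_{\c{B}}\S_G(\a,\b;\c{A},\c{B})=V$ and is optimal for Alice, and symmetrically $\c{U}_B$ is optimal for Bob; equivalently, the claim exhibits $(\c{U}_A,\c{U}_B)$ as a saddle point of value $V$. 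There is no serious obstacle here: the content is entirely in (i) guessing the closed form $V=\frac1N\sum_{i,j}\a_i\b_jG(i,j)$ and (ii) the bookkeeping in the cancellation above. The one point to be careful about is that the induction must be run with the opponent's strategy ($\c{B}$, respectively $\c{A}$) quantified universally at each level, so that the inductive hypothesis can legitimately be applied to that strategy on the smaller games that arise after one round.
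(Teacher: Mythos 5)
Your proof is correct and follows essentially the same route as the paper's: an induction on $N$ establishing that the score equals $\frac1N\sum_{i,j}\a_i\b_jG(i,j)$ whenever one player plays uniformly, regardless of the opponent's strategy, followed by the standard two-sided (saddle-point) argument. The only cosmetic difference is that the paper first reduces to deterministic plays of the unrestricted player and then extends to mixtures, whereas you carry the arbitrary mixed strategy $q$ through the computation and verify directly that its contributions cancel.
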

\begin{proof}
	We will prove by induction on $N$ that if $\a,\b$ are restriction vectors with $\sum_i \a_i=\sum_j \b_j=N$ and if $\A$ is any strategy for Alice, then
	$$\S_G(\a,\b;\A,\U_B)=\sum_{i,j}\frac{G(i,j)\a_i\b_j}{N}=:M(\a,\b).$$
	The base case when $N=1$ is obvious. For the inductive process, suppose there exists $i\in O_A$ such that $\Pr(\A(\a,\b)=i)=1$. By Fact~\ref{fact:restricted}, we have
	\begin{equation}\S_G(\a,\b;\A,\U_B)=\sum_j \frac{\b_j}{N}\cdot (G(i,j)+\S_G(\a-\del_{i},\b-\del_j;\A,\U_B))\label{eq:restricted}.\end{equation}
	By the inductive hypothesis, we have
	\begin{align*}
		\S_G(\a-\del_{i},\b-\del_j;\A,\U_B)&=\frac{1}{N-1}\scalebox{2}{(} \sum_{i'\ne i,j'\ne j}G(i',j')\a_{i'}\b_{j'}+\sum_{i'\ne i}G(i',j)\a_{i'}(\b_j-1)\\
		&\hspace{1in}+\sum_{j'\ne j}G(i,j')(\a_i-1)\b_{j'}+G(i,j)(\a_i-1)(\b_j-1) \scalebox{2}{)}\\
		&=\frac{1}{N-1}\left(\sum_{i',j'}G(i',j')\a_{i'}\b_{j'}-\sum_{i'}G(i',j)\a_{i'}-\sum_{j'}G(i,j')\b_{j'}+G(i,j)\right).
	\end{align*}
	Plugging this into \eqref{eq:restricted} and using $\sum_j c\b_j =cN$ for any constant $c$, we have
	\begin{align*}
		N(N-1)\cdot \S_G(\a,\b;\c{A},\c{U}_B)&=\sum_jb_j\scalebox{2}{(}(N-1)G(i,j)+\sum_{i',j'}G(i',j')\a_{i'}\b_{j'}\\
		&\hspace{1in}-\sum_{i'}G(i',j)\a_{i'}-\sum_{j'}G(i,j')\b_{j'}+G(i,j)\scalebox{2}{)}\\
		&=(N-1) \sum_{i',j'} G(i',j')\a_{i'}\b_{j'}.
	\end{align*}
	Dividing both sides by $N(N-1)$ gives $\S_G(\a,\b;\A,\U_B)=M(\a,\b)$ under our assumption $\Pr(\A(\a,\b)=i)=1$. Since this holds regardless of which deterministic option $i$ Alice plays, we see that $\S_G(\a,\b;\A,\U)=M(\a,\b)$ for any (mixed) strategy $\A$ for Alice, concluding the inductive proof.
	
	As we have shown, Bob can use strategy $\U$ to guarantee that Alice's expected score is at most $M(\a,\b)$. A symmetric argument shows that Alice can guarantee at least $M(\a,\b)$ points by using her uniform strategy. Thus $M(\a,\b)$ is the expected score if both players play optimally, and in particular $\c{U}_A$ and $\c{U}_B$ are optimal strategies for Alice and Bob respectively.
\end{proof}
We note that in general $\c{U}_A$ and $\c{U}_B$ will not be the unique optimal strategies for the restricted $G$-game. Indeed, if $G(i,j)=0$ for all $i,j$ then every strategy is optimal, and more generally there will be multiple optimal strategies if there exist $i\ne i'$ such that $G(i,j)=G(i',j)$ for all $j$.
\end{document}